\documentclass[12pt,reqno]{amsart}
\usepackage[margin=1in]{geometry}
\usepackage{graphicx}
\usepackage{amsmath,amsthm,amsfonts,mathrsfs,amssymb,float,color}
\usepackage{graphicx}
\usepackage{amsmath,amsfonts,mathrsfs,amssymb,float,color}
\usepackage{graphicx}
\usepackage{textcomp}
\usepackage{verbatim}
\usepackage{upgreek}
\usepackage[T1]{fontenc}
\usepackage[utf8]{inputenc}
\usepackage{hyperref}
\hypersetup{colorlinks=true,citecolor=red,linkcolor=blue}
\usepackage{epsfig,subfigure,fancybox,balance}
\usepackage{diagbox}
\usepackage{youngtab}

\newcommand{\beq}[1]{\begin{equation} \label{#1}}
\newcommand{\eeq}{\end{equation}}
\newcommand{\bea}{\bed\begin{array}{rl}}
\newcommand{\eea}{\end{array}\eed}
\newcommand{\bed}{\begin{displaymath}}
\newcommand{\eed}{\end{displaymath}}
\newcommand{\barray}{\begin{array}{ll}}
\newcommand{\earray}{\end{array}}
\newcommand{\disp}{\displaystyle}
\newcommand{\ad}{&\!\disp}
\newcommand{\aad}{&\disp}
\newcommand{\beqa}[1]{\begin{equation}\label{#1}\barray}
\newcommand{\eeqa}{\earray\end{equation}}
\newcommand{\al}{\alpha}
\newcommand{\e}{\varepsilon}

\newcommand{\la}{\lambda}
\newcommand{\La}{\Lambda}
\newcommand{\sg}{\sigma}

\newcommand{\ga}{\gamma}
\newcommand{\Ga}{\Gamma}
\newcommand{\dl}{\delta}
\newcommand{\Dl}{\Delta}
\newcommand{\cd}{(\cdot)}

\def\phi{\varphi}
\def\indi{{\bf 1}}
\def\half{\frac{1}{2}}

\newcommand{\CA}{{\mathcal A}}
\newcommand{\CF}{{\mathcal F}}

\newcommand{\CU}{\mathcal{U}}
\newcommand{\CX}{\mathcal{X}}

\newcommand{\CO}{\mathcal{O}}

\newcommand{\CB}{\mathcal{B}}

\newcommand{\CL}{\mathcal{L}}
\newcommand{\CP}{\mathcal{P}}
\newcommand{\CE}{\mathcal{E}}
\newcommand{\CK}{\mathcal{K}}

\newcommand{\CR}{\mathcal{R}}
\newcommand{\CS}{\mathcal{S}}
\newcommand{\CV}{\mathcal{V}}
\newcommand{\CT}{\mathcal{T}}
\newcommand{\CH}{\mathcal{H}}

\newcommand{\CG}{\mathcal{G}}
\newcommand{\CY}{\mathcal{Y}}

\newcommand{\CQ}{\mathcal{Q}}
\newcommand{\CZ}{\mathcal{Z}}
\newcommand{\CJ}{\mathcal{J}}
\newcommand{\EE}{{\mathbb E}}
\newcommand{\PP}{{\mathbb P}}
\newcommand{\YY}{{\mathbb Y}}
\newcommand{\NN}{{\mathbb N}}
\newcommand{\rr}{{\mathbb R}}
\newcommand{\LL}{{\mathbb L}}
\newcommand{\QQ}{{\mathbb Q}}
\newcommand{\DD}{{\mathbb D}}
\newcommand{\HH}{{\mathbb H}}
\newcommand{\FF}{{\mathbb F}}
\newcommand{\bS}{{\mathbb S}}
\newcommand{\KK}{{\mathbb K}}

\newcommand{\TT}{{\mathbb T}}
\newcommand{\SE}{\mathscr{E}}
\newcommand{\ST}{\mathscr{T}}
\newcommand{\SA}{\mathscr{A}}
\newcommand{\SH}{\mathscr{H}}

\newcommand{\fg}{\mathfrak{g}}
\newcommand{\fu}{\mathfrak{u}}
\newcommand{\fa}{\mathfrak{a}}
\newcommand{\fb}{\mathfrak{b}}

\newcommand{\fT}{\mathfrak{T}}
\newcommand{\fh}{\mathfrak{h}}

\newcommand{\fB}{\mathfrak{B}}

\newcommand{\wdt}{\widetilde}
\newcommand{\wdh}{\widehat}

\newcommand{\qv}[1]{\langle #1 \rangle}
\newcommand{\bqv}[1]{\big\langle #1 \big\rangle}
\newcommand{\Bqv}[1]{\Big\langle #1 \Big\rangle}

\newcommand{\tnorm}[1]{\vert\vert\vert #1 \vert\vert\vert}

\numberwithin{equation}{section}

\newtheorem{thm}{Theorem}[section]
\newtheorem{lem}[thm]{Lemma}
\newtheorem{defn}[thm]{Definition}
\newtheorem{cor}[thm]{Corollary}
\newtheorem{prop}[thm]{Proposition}
\newtheorem{rem}[thm]{Remark}

\newtheorem{ass}[thm]{Assumption}

\newcommand{\thmref}[1]{Theorem~{\rm \ref{#1}}}
\newcommand{\lemref}[1]{Lemma~{\rm \ref{#1}}}

\newcommand{\corref}[1]{Corollary~{\rm \ref{#1}}}
\newcommand{\propref}[1]{Proposition~{\rm \ref{#1}}}
\newcommand{\defref}[1]{Definition~{\rm \ref{#1}}}
\newcommand{\remref}[1]{Remark~{\rm \ref{#1}}}

\newcommand{\assmref}[1]{Assumption~{\rm \ref{#1}}}
\newcommand{\secref}[1]{Section~{\rm \ref{#1}}}

\usepackage{accents}
\usepackage{scalerel}

\newcommand{\ito}{It\^{o} }
\newcommand{\gateaux}{G\^{a}teaux }
\newcommand{\holder}{H\"{o}lder }
\newcommand{\cadlag}{c\`{a}dl\`{a}g }
\newcommand{\gronwall}{Gr\"{o}nwall }

\begin{document}
\title[Partially observed control of FSPDEs and BSDEs with jumps]{Maximum principles for partially observed
controls of forward SPDEs and backward SDEs with jumps
}

\author{Hongjiang Qian}
\address{Department of Mathematics and Statistics, Auburn University, Auburn, AL 36849}
\email{hjqian.math@gmail.com}

\author{George Yin}
\address{Department of Mathematics, University of Connecticut, CT 06269}
\email{gyin@uconn.edu}

\author{Yanzhao Cao}
\address{Department of Mathematics and Statistics, Auburn University, AL 36849}
\email{yzc0009@auburn.edu}

\author{Guannan Zhang}
\address{Computer Science and Mathematics Division, Oak Ridge National Laboratory, TN 37830}
\email{zhangg@ornl.gov}

\thanks{The research of H. Qian and Y. Cao was supported by the U.S. Department of Energy under the grant numbers DE-SC0022253, DE-SC00256, the research of G. Yin was supported in part by the National Science Foundation under grant DMS-2204240, and the research of G. Zhang was supported in part by the U.S. Department of Energy, Office of Advanced Scientific Computing Research, Applied Mathematics program under the grant ERKJ388 and ERKJ443.}

\subjclass[2020]{93E11, 93C41, 93E20, 60H15, 60H07.}
\keywords{Partially observed control,  stochastic maximum principle, stochastic partial differential equation, jump process,
BSDE, recursive utility,
Malliavin calculus, M-type 2 Banach space.}

\begin{abstract}
This work establishes two versions of the Pontryagin-type maximum principles for partially observed optimal control of coupled forward stochastic partial differential equations (FSPDEs) and backward stochastic differential equations (BSDEs) with jumps in convex control domains. The FSPDE-BSDE system is driven by cylindrical Wiener processes, finite-dimensional Brownian motions, and compensated Poisson random measures. For  systems with deterministic coefficients, a direct method is employed and particular attention is
focused on
establishing the well-posedness of a singular backward SPDE with jumps. For systems with random coefficients, a Malliavin calculus approach is
developed. The main novelty here is the establishment of the well-posedness of an operator-valued SPDE with jumps, which provides a new stochastic flow representation for
linear SPDEs with jumps.
\end{abstract}
\maketitle

\section{Motivation}
The purpose of the present work is to establish Pontryagin-type maximum principles for partially observed optimal control of forward stochastic partial differential equation (FSPDEs) and backward stochastic differential equations  (BSDEs) driven by cylindrical Wiener processes, finite-dimensional Brownian motions, and compensated Poisson random measures (PRMs).

One of the motivational example is the \textit{partial information optimal harvesting}; see \cite[Section~4.1]{MMPB13}. Assume that $x(t,\xi)$ describes the density of a population (e.g. fish) at time $t\in (0,T)$ and at location $\xi\in \CO \subset \rr^n$.
Its dynamics is given by a stochastic reaction-diffusion equation with jumps
\beqa{harv}
dx(t,\xi)&\!\!\!\!\!=\big[\half \Dl x(t,
\xi)+ a(x(t,\xi))-u(t,\xi)\big]+\sg(t)x(t,\xi)dW(t) \\
&\!\!\!\! \; +\int_\rr \Theta(x(t,\xi),u(t,\xi),\al) \wdt N(dt,d\al),\quad  x(0,\xi)=x^0(\xi),\; (t,\xi)\in [0,T]\times \CO,
\eeqa
supplemented with no-flux boundary conditions on $\partial\CO$. Here $a\cd$ models the %intrinstic
intrinsic population growth. A typical choice is the logistic law $a(x)=r_a x(1-x/c_a)$, where $r_a$ is the intrinsic growth rate and $c_a>0$ is the environmental carrying capacity. The control variable $u(t,\xi)$ denotes the harvesting intensity at location $\xi$, 
assumed to take values in a prescribed admissible set reflecting regulatory constraints.
The cylindrical Wiener process $W(t)$ models  continuous environmental variability affecting reproduction and survival, 
with multiplicative coefficients of the form $\sg(t)x(t,\xi)$. The jump component driven by the compensated jump process 
$\wdt N$ captures sudden population losses induced by rare events. A biological meaningful specification is $\Theta(x,u,\al)=-\al x, \al \in (0,1)$ so that the jump instantaneously removes a random fraction $\al$ of the local population, reflecting abrupt mortality episodes.

In reality, the full spatial population field is
often not completely  observable. Instead, the decision maker has access only to partial and noisy measurements obtained from sensor networks. This is modeled through an observation process $
d Y(t) = \int_{\CO} h(\xi) x(t,\xi) d\xi + dB(t)$,
where $h\in L^2(\CO)$ represents the spatial sensitivity of the monitoring devices, $B(t)$ is an $\rr^d$-valued Brownian motion independent of $W$ and $\wdt N$. The admissible harvesting strategies are required to be adapted to the filtration generated by $Y$, reflecting the fact that decisions are based solely on partial information.

We consider a performance criterion of combining a classical accumulated cost with a recursive risk-sensitive component. For an admissible harvesting strategy $u$, the objective functional is defined by
\beq{Ju}\barray
J(u)=\EE\big[\int_0^T \int_{\CO} L(x(t,\xi),u(t,\xi))d\xi dt + \phi(x(T)) + y_0 \big],
\earray\eeq
where the running cost is $L(x,u)=-p_L u + c_L u^2/2 + \la_L /2(x-x^*)^2$ and the terminal cost takes the form $\phi(x(T))=0.5 \beta(\int_{\CO} x(T,\xi)d\xi - x_T^*)^2$. Here $p_L>0$ denotes the unit revenue from harvesting, while the quadratic term in
$u$ models increasing marginal operational and regulatory costs. The penalty term involving $x^*$ enforces sustainability by discouraging deviations from a target population level, and the terminal term prevents end-horizon depletion. The recursive component $(y_t,z_t,r_t,\ga_t)$ is characterized by the following BSDE with jumps:
\bea\!\!\!
dy_t\!=\!\big[-\eta y_t+ \vartheta_r(\|r_t\|)+ \int_{\Xi} \rho(\al)\vartheta_\ga(|\ga_t(\al)|) \pi(d\al)\big]dt+ r_t dB_t + \int_{\Xi}\ga_t(\al)\wdt N(d\al,dt)
\eea
with $y_T=0$, where
$\eta\geq 0, \rho\cd>0$ is bounded, $\pi$ denotes the $\sg$-finite measure. We choose Huber-type convex penalties
$\vartheta_r(s)=\bar \theta \dl_{r}^2(\sqrt{1+(s/\dl_r)^2} -1)$, and $\vartheta_\ga(s)=\dl_\ga^2(\sqrt{1+(s/\dl_\ga)^2} -1)$ with parameters $\bar \theta>0$ and $\dl_r,\dl_\ga>0$. These functions are $C^1$, convex, satisfy $\vartheta_r(0)=\vartheta_\ga(0)$, and have at most linear growth.

The resulting optimization problem is to minimize  $J(u)$ in \eqref{Ju} over all admissible harvesting policies adapted to the observation filtration.
While the harvesting model above serves as a representative application, stochastic reaction-diffusion equations with jumps such as \eqref{harv} arise in many other contexts, including environmental pollution dynamics describing the evolution of spatially distributed chemical concentrations; see, for instance, \cite[Chapter~19]{PZ07}.

\section{Prelude and notation}
\label{sec:intro}
We begin with two real separable Hilbert spaces
$S_1$ and $S_2$, and
denote by $\CL(S_1,S_2)$ the space of  bounded linear operators, by $\CL_1(S_1,S_2)$ the Banach space of trace-class operators,
and by $\CL_2(S_1,S_2)$ the Hilbert space of Hilbert-Schmidt operators from $S_1$ to $S_2$. When $S_1=S_2$, we simplify the notation to $\CL(S_1), \CL_1(S_1)$, and $\CL_2(S_1)$. For Banach spaces $S$ and $E$, we say a mapping $\Upsilon:S \to E$ is of class $\CG^1(S,E)$ if it is
\gateaux differentiable and its gradient $\nabla \Upsilon: S \to \CL(S,E)$ is strongly continuous (i.e., continuous in the strong operator topology).

Let $H$ be
a separable Hilbert space with inner product $\qv{\cdot,\cdot}_H$ and induced norm $|\cdot|_H$. Consider a stochastic basis $(\Omega,\CF,\{\CF_t\}_{t\in [0,T]},\PP)$ satisfying the usual conditions.
On the basis, define an $H$-valued cylindrical Wiener process $W_t$, which corresponds to space-time white noise, an $\rr^d$-valued standard Brownian motion $Y_t$, and a Poisson random measure
$N$ on $\Xi \times \rr^+$. Here $\Xi$ is a measurable space equipped with the Borel $\sigma$-field $\CB(\Xi)$ and a $\sigma$-finite measure $\pi$. We assume $W, Y,$ and $N$ are mutually independent. The compensated Poisson random measure is defined as $\wdt N(\SA, dt) := N(\SA, dt) - \pi(\SA) dt$, which is a martingale for all $\SA \in \mathcal{B}(\Xi)$ such that $\pi(\SA) < \infty$.

For any $\kappa>0$ and $s\in [0,T]$,
denote by $
L_\CP^\kappa([s,T]\times \Omega; H)=:
\CH_\kappa([s,T])$ the Banach space of $H$-valued progressively measurable process $X$ such that $\EE\int_s^T |X_t|_H^\kappa dt <\infty$, and $\HH_\kappa([s,T])$
the subspace of $H$-valued progressively measurable process $X$ such that $\EE\sup_{t\in [s,T]}|X_t|_H^\kappa<\infty$. When $s=0$, the notation $\mathcal{H}_\kappa([0, T])$ and $\mathbb{H}_\kappa([0, T])$ reduce
to $\mathcal{H}_\kappa(T)$ and $\mathbb{H}_\kappa(T)$, respectively. Furthermore, $\FF_\CP^2([s,T];H)$ denotes the space of $H$-valued $\CF_t$-predictable processes $X(\omega,t,\al)$ on $\Omega\times[0,T]\times \Xi$ such that $\EE\int_s^T\int_\Xi |X(\omega,t,\al)|_H^2 \pi(d\al)dt<\infty$. In what follows,
$|\cdot|$, $\qv{\cdot,\cdot}$ denote
the norm and inner product when the corresponding space is clear from the context; otherwise, a subscript is added for clarity. For an operator $\Phi$,  $\Phi^*$ denotes its adjoint. For any $0\leq s\leq t \leq T$,
define the product space $\Xi_s^t:=\Xi\times [s,t]$. For simplicity, we set $\Xi_T:=\Xi_0^T$. Henceforth,
$C$ is used as
a generic positive constant whose values may
change for different usage.

\subsection{Formulation}
Consider the following controlled FSPDEs and BSDEs with correlated Gaussian and jump
processes
\beq{sys}
\left\{\barray
\!\! dx_t\ad\!\!= [A x_t + F(x_t, u_t)]dt+ G_1(x_t, u_t)dW_t \\
\aad + \sum_{j=1}^d G_2^j(x_t,u_t)dB_t^j+\int_{\Xi} \Theta(x_t,u_t,\al) \wdt N(d\al, dt) \\
\!\! dy_t \ad\!\! = - \int_{\Xi} g(x_t,y_t, u_t, z_t, r_t,\ga_t(\al))\pi(d\al) dt + z_t dW_t + r_t dY_t +\int_{\Xi}\ga_t(\al) \wdt N(d\al,dt) \\
\!\! x_0\ad\!\! = x, \quad y_T = f(x_T),
\earray\right.
\eeq
where $A$ is the infinitesimal generator of a $C_0$-contraction semigroup $\{e^{tA}\}_{t\geq 0}$ of bounded linear operators, and $F,G_1,G_2^j,\Theta$ and $g$ are suitable drift and diffusion coefficients to be specified later. The $C_0$-contraction semigroup is assumed to guarantee
the applicability of maximal inequality for  stochastic convolutions with respect to compensated Poisson random measures. The control process $u$ takes values in a convex set $\CU$ contained in a separable Banach space $U$. The quintuplet $(x_t,y_t,z_t,r_t,\ga_t\cd)$, taking values in $H\times \rr^d \times \CL_2(H;\rr^d) \times \rr^{d\times d} \times L^2(\Xi;\rr^d)$, is the state process with initial state $x \in H$ and terminal condition $f(x_T)$
for a suitable function $f:H \to \rr^d$.
The process $B_t^{j}$ is a real-valued stochastic process depending on the control $u$ that will be defined later; for notational simplicity, this dependence is suppressed. In what follows,
we adopt the Einstein summation convention over finite repeated indices such that the summation symbol $\sum_{j=1}^d$ in \eqref{sys} is omitted.

Suppose that the process $(x,y,z,r,\ga\cd)$ is 
only partially observable.
That is, we have access to an observation process $Y$, which is governed by the following SDE:
\beq{ob}
dY_t = h(t,x_t,u_t) dt + dB_t, \quad Y_0 = 0,
\eeq
where $h:[0,T]\times H \times U \to \rr^d$ is a given continuous mapping. Let $\CF_t^Y:=\sg\{Y_s: s\leq t\}$ denote
the filtration generated by $Y$. The set of admissible controls is defined by
$
\CU_{\text{ad}}\!:=\!\{u:[0,T]\times \Omega\to U\!:u\text{ is } \CF_t^Y \text{-adapted and } \sup_{t\in [0,T]}\EE[\|u_t\|_{\CU}^\kappa] <\infty, \forall\, \kappa \in \NN \}.
$

\begin{ass}\label{ass}
{\rm
We assume the following conditions throughout the paper.
\begin{itemize}
\item[\rm{(H1)}] $F: H\times U  \to H$ is a map of class $\CG^1(H\times U, H)$ with bounded gradient $(\nabla_x F, \nabla_u F)$ on $H\times U$. That is, there exists a constant $C>0$ such that
\bea\!\!\!\!\!\!
|\nabla_x F(x,u)|_{\CL(H)}+ |\nabla_u F(x,u)|_{\CL(U,H)} \leq C \text{ and } |F(0,u)| \leq C, \; \forall\, x\in H, u\in U.
\eea
\item[\rm{(H2)}] $G_1:H \times U \to \CL(H)$ satisfies
$e^{sA} G_1(x,u)\in \CL_2(H)$ for all $s\geq 0, x\in H, u\in U$, and the map $(x,u)\mapsto e^{sA} G_1(x,u)\in \CG^1(H\times U, \CL_2(H))$. Besides, there exist constants $C>0$ and $\vartheta\in [0,1/2)$ such that $\forall\, x\in H,u\in U$, $|e^{sA} G_1(0,u)|_{\CL_2(H)} \leq C s^{-\vartheta}$ and
\bea
|\nabla_x[e^{sA} G_1(x,u)]|_{\CL(H,\CL_2(H))} + |\nabla_u[e^{sA}G_1(x,u)]|_{\CL(U,\CL_2(H))} \ad \leq C s^{-\vartheta}.
\eea

\item[\rm{(H3)}] $G_2^j: H\times U \to H$ is a map of class $\CG^1(H\times U, H)$ for each $j=1,\dots, d$, with bounded gradient $(\nabla_x G_2^j, \nabla_u G_2^j)$ on $H\times U$ and $|G_2^j(0,u)| \leq C,\forall\ u\in U$.

\item[\rm{(H4)}] There exists an orthonormal
basis $\{e_i\}_{i\in \NN}\in H$ such that for all $i\in \NN$ and all $u\in U$, the map $x\mapsto G_1(x,u)e_i$ is of $\CG^1(H,H)$  such that there exists a constant $C>0$, $|\nabla_x[G_1(x,u)e_i]|_{\CL(H)} \leq C,\forall\, i\in \NN, x\in H, u\in U$.

\item[\rm{(H5)}] For any $x,w \in H$, the map $u\mapsto G_1(x,u)w$ is of class $\CG^1(U,H)$ and there exists a constant $C>0$ such that $
|\nabla_u [G_1(x,u)w] v|_H \leq C|w|_H |v|_U, \; \forall\, x,w\in H$ and $u, v\in U$.

\item[\rm{(H6)}] For each $\al\in \Xi$, $\Theta(\cdot,\cdot,\al): H\times U \to H$ is class of $\CG^1(H\times U, H)$ such that for all $\kappa \geq 2$, $\max(|\nabla_\iota \Theta(x,u,\cdot)|_{L^2(\Xi;\pi)}, |\nabla_\iota \Theta(x,u,\cdot)|_{L^\kappa(\Xi;\pi)})$ are bounded for $\iota=x,u$ and $\forall\, x\in H,u\in U$. Moreover, $|\Theta(0,u,\cdot)|_{L^2(\Xi;\pi)}^2 + |\Theta(0,u,\cdot)|_{L^\kappa(\Xi;\pi)}^\kappa \leq C$ for all $u\in U$ and some constant $C>0$.

\item[\rm{(H7)}] $g: H\times U \times \rr^d\times \CL_2(H;\rr^d)\times \rr^{d\times d} \times L^2(\Xi;\rr^d) \to \rr^d$ is of class %of
$\CG^1$ in variables $(x,u,z,\ga)$ and continuously differentiable in $(r,y)$  with uniformly bounded derivatives.

\item[\rm{(H8)\footnotemark}]
The mapping $h(t,\cdot,\cdot): H \times U \to \rr^d$ is of $\CG^1(H\times U, \rr^d)$ for each $t\in [0,T]$, bounded together with its \gateaux derivative.

\item[(H9)] The initial value $x\in H$ satisfies $\EE|x|^\kappa <+\infty$ and $f:H\to \rr^d$ belongs to $\CG^1(H,\rr^d)$ with uniformly bounded derivatives.
\end{itemize}
\footnotetext{While (H8) assumes $h$ is bounded, \cite{WWX13} generalized this to linear growth in Euclidean space using approximation methods. Extending this to
solutions of SPDEs
requires further care and is reserved for future study.}
}
\end{ass}
Assumptions (H1)-(H6) ensure well-posedness of the forward SPDE system with jumps and its corresponding first-order variational equation. Specifically,
%while
(H2) encodes the smoothing property of the semigroup to diffusion coefficients, which is essential for handling the white noise and for establishing additional regularity of the adjoint process.
In addition,
(H7) guarantees the well-posedness of the backward system, (H8) imposes regularity of the observation function, and (H9) specifies integrability and regularity conditions on the initial and terminal data. An example without jumps satisfying our assumptions can be found in \cite{FHT18}.

By substituting \eqref{ob} into \eqref{sys}, we obtain
\beq{sys1}\left\{
\barray\!\!
dx_t & \!\!\!\!= [A x_t + (F-G_2^j h^j)(x_t, u_t)]dt+ G_1(x_t, u_t)dW_t \\
& + G_2^j(x_t,u_t)dY_t^j+ \int_{\Xi}\Theta(x_t,u_t,\al)\wdt N(d\al,dt) \\
\!\! dy_t & \!\!\!\!= - \int_{\Xi} g(x_t,y_t, u_t, z_t, r_t,\ga_t(\al)) \pi(d\al) dt + z_t dW_t + r_t dY_t+\int_{\Xi} \ga_t(\al) \wdt N(d\al,dt) \\
\!\! x_0 \ad \!\!\!\! =x, \quad y_T = f(x_T).
\earray
\right.
\eeq
Under \assmref{ass}, since
FSPDE-BSDE system \eqref{sys1} is decoupled, there exists a unique solution $(x^u, y^u, z^u, r^u,\ga^u\cd)$
for any $u\in \CU_{\text{ad}}$. Specifically, $x^u$ is understood as a unique \cadlag mild solution satisfying
\beqa{xu-mild}
x_t^u & = e^{tA}x + \int_0^t e^{(t-s)A} (F-G_2^j h^j)(x_s^u, u_s)ds + \int_0^t e^{(t-s)A} G_1 (x_s^u,u_s) dW_s \\
&\quad + \int_0^t e^{(t-s)A} G_2^j(x_s^u, u_s)dY_s^j+ \int_0^t \int_{\Xi} e^{(t-s)A}\Theta(x_s^u, u_s,\al) \wdt N(d\al,ds)
\eeqa
such that $x^u\in \HH_\kappa(T)$, and $y^u$ is the unique solution to the BSDE in \eqref{sys1} satisfying
\beqa{bsde-jump}
y_t^u & = f(x_T^u)+ \int_{\Xi_t^T}g(x_s^u,u_s, y_s^u, z_s^u, r_s^u,\ga_u(\al))\pi(d\al) ds \\
& \quad - \int_t^T z_s^u dW_s - \int_t^T r_s^u dY_s -\int_t^T \ga_s^u(\al)\wdt N(d\al,ds)
\eeqa
such that $(y^u,z^u,r^u,\ga^u\cd)\in L^2([0,T]\times \Omega; \rr^d \times \CL_2(H;\rr^d)\times \rr^{d\times d}\times L^2(\Xi;\rr^d))$. For the existence and uniqueness of SPDEs with jumps of \eqref{xu-mild}, we refer to Marinelli et al. \cite[Theorem 2.4]{MPR10} for details; see also Peszat and Zabczyk \cite{PZ07} and Kotelenez \cite{Kot84}. For the existence and uniqueness of BSDE with jumps of \eqref{bsde-jump}, see Tang and Li \cite{TL94} and Situ \cite{Sit97}. Our BSDE in \eqref{sys1} is also driven by a cylindrical Wiener process; however, the existence and
uniqueness
proofs hold as $z^u$ being restricted to be Hilbert-Schmidt.
Let us now introduce
\bea\!\!\!
\rho^u_t\!:= \exp\big\{\int_0^t h(s,x_s^u,u_s) dY_s- \half \int_0^t |h(s, x_s^u,u_s)|^2 ds \big\},\, B_t^j\!:=Y_t^j -\int_0^t h^j(s,x_s^u,u_s)ds.
\eea
From the It\^{o} formula, $\rho^u$ satisfies the following SDE
\beq{eq-rho}
d\rho^u_t = \rho_t^u h(t, x^u_t, u_t) dY_t, \quad \rho^u_0=1.
\eeq
Under Assumption (H8), the process $\rho_t^u$ is an $\mathcal{F}_t$-martingale. We can define a new probability measure $\mathbb{Q}^u$ on $\mathcal{F}_t$ by the Radon-Nikodym derivative such that $d\mathbb{Q}^u = \rho^u_t d\mathbb{P}$. By Girsanov's theorem, $W$ and $B$ are an $H$-valued cylindrical Wiener process and a standard $d$-dimensional Brownian motion on $(\Omega, \mathcal{F}, \{\mathcal{F}_t\}_{t \geq 0}, \mathbb{Q}^u)$, respectively, and they remain to be mutually independent. We denote the expectation with respect to the measure $\mathbb{Q}^u$ by $\mathbb{E}^u[\cdot]$.

Motivated by \cite{WWX13}, we consider a general cost functional defined by
\beq{cost}
J(u)=\EE^u \bigg[\int_0^T\int_{\Xi} L(t,x^u_t, u_t, y_t^u, z_t^u, r_t^u, \ga_t^u(\al)) \pi(d\al)dt + \phi(x_T^u)+ \psi(y^u_0) \bigg].
\eeq

\begin{ass}\label{ass:cost} We assume that the map $L: [0,T]\times H \times U \times \rr^d  \times \CL_2(H;\rr^d) \times \rr^{d\times d} \times L^2(\Xi;\rr^d) \to \rr$ and the terminal cost functional $\phi:H\to \rr$ are G\^{a}teaux differentiable with respect to $(x,u,y,z,r,\ga)$ and $x$, respectively, such that
\bea
\EE^u \big[\int_0^T\int_{\Xi} |L(x_t^u,u_t, y_t^u,z_t^u, r_t^u,\ga_t^u(\al)) | \pi(d\al) dt + |\phi(x_T^u)|+ |\psi(y_0^u)|\big]<\infty.
\eea
The mapping $\psi: \rr^d\to \rr$ is continuously differentiable.
\end{ass}

The optimal control problem for the partially observed  FSPDE-BSDE system with jumps is to find an admissible control $\wdh u\in \CU_{\text{ad}}$ such that
\beq{oc}
J(\wdh u)=\inf_{u\in \CU_{\text{ad}}}J(u).
\eeq
Using a change of measure, we observe that $J(u)$ in \eqref{cost} can be rewritten as
\beqa{J-PP}
J(u)=\EE \big[\int_0^T\int_{\Xi} \rho^u_t L(t,x_t^u, y_t^u,z_t^u, r_t^u, u_t,\ga_t^u(\al))\pi(d\al) dt + \rho_T^u \phi(x_T^u)+ \psi(y^u_0)\big].
\eeqa
The optimal control problem \eqref{oc} is thus equivalent to minimizing  the functional \eqref{J-PP} over $\CU_{\text{ad}}$ in the probability space $(\Omega,\CF,\CF_t,\PP)$, subject to \eqref{sys1} and \eqref{eq-rho}.

Let $\wdh u$ be the optimal control of \eqref{oc} and $(\wdh x, \wdh y, \wdh z, \wdh r, \wdh \ga\cd)$ be the corresponding optimal state trajectory.
We aim
to obtain two versions
of stochastic maximum principles (SMPs) for $\wdh u$ using both a direct method
and a Malliavin calculus approach. Throughout the paper, we set $\wdh \EE:= \EE^{\wdh u}$ and $\wdh \QQ:= \QQ^{\wdh u}$.

The
formulation above for
the partially observed optimal control (POOC) problem follows the classical framework of Bensoussan \cite{Ben83} and Tang \cite{Tan98}.
In this setting, the control $u$ is adapted to the observation $Y$, while $Y$ itself is defined \textit{a priori} as a Brownian motion on probability space $(\Omega,\CF,\CF_t,\PP)$, thereby ensuring its independence
with the control $u$. Partial observation optimal control
problems in Euclidean spaces have been extensively studied over the past several decades, with broad applications in economics, physics, and engineering; see, for instance, \cite{Fle68,Ben92,Par82,FP82,Tan98} and the references therein. For related problems involving recursive utility or BSDEs,
we refer to \cite{WW09,WWX13,ZS23}. In contrast, the corresponding
problems for SPDEs have received only
limited attention. Existing works include \cite{Ahm96,Ahm19,MMPB13, OPZ05,DOS18}, as well as our recent
works \cite{BCQ25,CQY25}. However, to the best of our knowledge, there are currently no results addressing
partial observation control
problems for coupled systems consisting of forward SPDEs and backward SDEs with jumps. The present work aims to fill this gap.

Compared with \cite{MMPB13} and \cite{LT23}, our
SPDE model is driven solely by a cylindrical Wiener process. Moreover, our cost functional includes both
running and terminal costs, as well as the term $\psi(y_0^u)$, which is motivated by recursive utility optimization problem \cite{LT23} and the $g$-expectation initiated by Peng \cite{Pen04}. In \cite{LT23}, the authors considered a special case of \eqref{sys1} with $G_1=z=\Theta=\gamma=0$ and $j=1$, together with $L=0$, $\phi=0$, and $\psi(y_0^u)=y_0^u$ in \eqref{cost}, under full observations. While \cite{LT23} allows for nonconvex control domains and characterizes the second-order adjoint process as the unique solution to a conditionally expected operator-valued backward stochastic integral equation, the driving noise in their SPDE is restricted to a one-dimensional Brownian motion.
It is suggested in \cite{FHT18} that
their results may be extended to the case of a cylindrical Wiener process.
Such a generalization is highly nontrivial and technically challenging, particularly in
non-convex
domains.

We summarize our contributions of this work as follows.
\begin{itemize}
  \item[(1)] We formulate a general partially observed optimal control problem for coupled FSPDE-BSDE systems driven  by cylindrical Wiener processes, finite-dimensional Brownian motions, and compensated Poisson random measures simultaneously. In particular, the cylindrical Wiener process is not assumed to
  have
  trace-class covariance operators,
  % thus giving
 allowing for space-time
  white noise and
  leading to
  a \textit{singular} backward SPDE (BPSDE) with jumps in the characterization of %the
  its adjoint process.
  \item[(2)] We
  extend
   the approximation method developed in \cite{FHT18} to
   establish
   the well-posedness of a broad class of
   of \textit{singular} BSPDEs with jumps.
  \item[(3)] For
  systems with random coefficients treated
  in \secref{sec:Mal}, we present a general framework to establish a \textit{stochastic flow} representation for solutions of linear SPDEs with jumps. To the best of our knowledge, such result is new even without jumps. Our approach
  relies
  on
  the theory of stochastic integration in Schatten-class
  operator spaces, which are M-type 2 Banach spaces; see \cite{NVW08,NVW07} for details.
% \textcolor{red}{reference}
The method is
   inspired by \cite{GP24} but allows for weaker % conditions
   assumptions
   and incorporates jump processes.
    %noise.
    We note that the techniques of \cite{MMPB13} are not applicable in our setting, as their analysis is restricted to equations driven by finitely many $\rr^d$-valued  Brownian motions.
  \item[(4)] Our work provides a three-fold generalization of existing results. First, it extends the finite-dimensional frameworks of \cite{WWX13,ZS23} to infinite-dimensional systems. Second, it generalizes \cite{LT23} by allowing
  cylindrical Wiener processes and compensated
  Poisson random measures as driving noises.
  Finally, when
  the backward SDE component $y_t$ in \eqref{sys} and the observation process in \eqref{ob} are omitted, our framework yields stochastic
  maximum principles
  for optimal control of SPDEs driven by space-time white noise and compensated Poisson random measures
  thereby extending the results of \cite{FHT18}.
\end{itemize}

The rest of the paper is organized as follows. \secref{sec:direct} devotes to
establishing the stochastic
maximum principles using a
direct approach. \secref{sec:sing} establishes the existence and uniqueness of singular backward SPDEs with jumps employed in \secref{sec:direct}. Finally, \secref{sec:Mal}
establishes the maximum principle by Malliavin calculus where all coefficients of our system are $\Omega\ni \omega$-dependent, thus 
non-Markovian.

\section{Direct approach}\label{sec:direct}
In this section, we establish the maximum principle using a direct approach. For any $v\in \CU_{\text{ad}}$ and $\e>0$, we define $u^\e:=\wdh u+ \e v$ and let $(x^\e, y^\e, z^\e, r^\e,\ga^\e\cd)$ be the solution of \eqref{sys1} with respect to the control $u^\e$. Because the control domain is convex, $u^\e\in \CU_{\text{ad}}$. In what follows, for any  $\zeta=F,G_1, G_2^j$, we denote by $\wdh \zeta(s):=\zeta(\wdh x_s,\wdh u_s)$, $\nabla_\iota \wdh \zeta(s):=\nabla_\iota\zeta(\wdh x_s, \wdh u_s)$ for $\iota=x,u$; and for $\zeta=g,L,\Theta$, we denote by $\wdh \zeta(s,\al):=\zeta(s,\wdh x_s, \wdh u_s, \wdh y_s, \wdh z_s, \wdh r_s,\wdh \ga_s(\al))$,  $\nabla_{\iota}\wdh \zeta(s,\al):=\nabla_{\iota} \zeta(s,\wdh x_s, \wdh u_s, \wdh y_s, \wdh z_s, \wdh r_s, \wdh \ga_s(\al))$ for $\iota=x,u,y,z,r,\ga$.

\begin{lem}\label{lem:mom}
Let Assumptions \ref{ass} holds. For any $u\in \CU_{\text{ad}}$, there exists a constant $C>0$ such that the solution of \eqref{sys1} and \eqref{eq-rho} satisfy the following estimates: $\forall\, \kappa \geq 2$, we have $ \EE^u |\rho^u_t|^\kappa <\infty$, and
\bea\ad
\sup_{t\in [0,T]}\EE^u|x_t^u|^\kappa \leq C\Big[1+\sup_{t\in [0,T]}\EE^u |u_t|^\kappa\Big],\; \sup_{t\in [0,T]}\EE^u |y_t^u|^2 \leq C\Big[1+\sup_{t\in [0,T]} \EE^u|u_t|^2 \Big], \\
\ad \EE^u \bigg[\int_0^T |z^u_t|_{\CL_2(H;\rr^d)}^2 + |r_t^u|^2 dt+ \int_{\Xi_T} |\ga_t^u(\al)|^2 \pi(d\al)dt \bigg] \leq C\bigg(1+\sup_{t\in [0,T]} \EE^u |u_t|^2 \bigg).
\eea
\end{lem}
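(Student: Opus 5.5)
We work under $\QQ^u$, where by Girsanov $W$, $B$ and $\wdt N$ keep their laws. Under $\QQ^u$ the forward equation in \eqref{sys}, driven by $dB$ and without the $-G_2^jh^j$ correction, is a standard SPDE with jumps. The estimates then follow from the classical moment bounds for mild solutions and BSDEs with jumps.

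\emph{Moments of $\rho^u$.} By (H8), $|h|\le C$. Write
\[
(\rho_t^u)^\kappa=\exp\Big\{\kappa\int_0^t h\,dY_s-\tfrac{\kappa^2}{2}\int_0^t|h|^2ds\Big\}\exp\Big\{\tfrac{\kappa^2-\kappa}{2}\int_0^t|h|^2ds\Big\}.
\]
The first factor is a $\PP$-martingale with mean one by Novikov, and the second is bounded by $e^{C\kappa^2T}$. Hence $\EE|\rho^u_t|^\kappa\le C$. For $\EE^u|\rho_t^u|^\kappa=\EE|\rho^u_t|^{\kappa+1}$ the same bound applies.

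\emph{Forward estimate.} Under $\QQ^u$ we use the mild form with $dB$:
\[
x_t^u=e^{tA}x+\int_0^te^{(t-s)A}F\,ds+\int_0^te^{(t-s)A}G_1\,dW_s+\int_0^te^{(t-s)A}G_2^j\,dB^j_s+\int_0^t\!\int_\Xi e^{(t-s)A}\Theta\,\wdt N.
\]
Each term is bounded in $L^\kappa(\QQ^u)$ as follows.
\begin{itemize}
\item The drift and $G_2$ terms have linear growth $|F(x,u)|\le C(1+|x|+|u|)$, from the bounded gradients in (H1) and (H3) (the $u$-gradient bound gives growth in $|u|$). They are handled by H\"older and BDG together with contractivity $|e^{tA}|\le1$.
\item For the $W$-integral, BDG gives a bound by $\EE^u(\int_0^t|e^{(t-s)A}G_1|_{\CL_2}^2ds)^{\kappa/2}$. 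By (H2), $|e^{(t-s)A}G_1(x,u)|_{\CL_2}\le C(t-s)^{-\vartheta}(1+|x|+|u|)$, and $\vartheta<1/2$ makes $(t-s)^{-2\vartheta}$ integrable. H\"older with exponent $\kappa/2$ then yields $C\int_0^t(t-s)^{-2\vartheta}\EE^u(1+|x_s|^\kappa+|u_s|^\kappa)ds$, after possibly splitting the kernel so that the H\"older conjugate of the singular factor stays integrable.
\item For the jump term we use the Kunita/BDG-type inequality for stochastic convolutions against $\wdt N$, valid for contraction semigroups (as in \cite{MPR10}). It controls the term by $\EE^u\int_0^t\big[(\int_\Xi|\Theta|^2d\pi)^{\kappa/2}+\int_\Xi|\Theta|^\kappa d\pi\big]ds$. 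By (H6) both pieces are at most $C(1+|x_s|^\kappa+|u_s|^\kappa)$; this is exactly why (H6) is stated with both the $L^2(\Xi;\pi)$ and the $L^\kappa(\Xi;\pi)$ norms.
\end{itemize}
Collecting terms gives
\[
\EE^u|x_t|^\kappa\le C\Big(1+\EE|x|^\kappa+\sup_s\EE^u|u_s|^\kappa\Big)+C\int_0^t(t-s)^{-2\vartheta}\EE^u|x_s|^\kappa ds.
\]
A Gronwall lemma with weakly singular kernel (Henry's inequality) gives the claimed bound, using (H9) for $\EE|x|^\kappa$.

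\emph{Backward estimate.} Under $\QQ^u$ the BSDE is driven by $dW$, $dB$ and $\wdt N$, with an additional drift $r_th(t,x_t,u_t)$ because $dY=h\,dt+dB$. By (H7) and (H8) the total driver is Lipschitz in $(y,z,r,\ga)$, with $|g(x,u,0,\dots,0)|\le C(1+|x|+|u|)$. Apply It\^o's formula to $e^{\beta t}|y_t|^2$ and take $\EE^u$; the stochastic integrals are true martingales by the a priori square integrability of the solution. Young's inequality then absorbs the $z,r,\ga$ terms into the left side for large $\beta$. This gives
\[
\sup_t\EE^u|y_t|^2+\EE^u\int_0^T\Big(|z|^2+|r|^2+\int_\Xi|\ga|^2d\pi\Big)dt\le C\,\EE^u\Big[|f(x_T)|^2+\int_0^T(1+|x_t|^2+|u_t|^2)dt\Big].
\]
Since $|f(x)|\le C(1+|x|)$ by (H9), the forward estimate with $\kappa=2$ concludes.

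One subtlety is the $\pi(d\al)$-integration of $g$ in the driver. If $\pi(\Xi)=\infty$ it must be interpreted with $g$ integrable in $\al$, and we take the constant bound on $\int_\Xi|g(\cdot,0)|d\pi$ as implicit in (H7).

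\emph{Main obstacle.} The delicate step is the forward $L^\kappa$ bound. It requires combining the singular kernel $(t-s)^{-\vartheta}$ from the space-time white noise with the jump maximal inequality, while keeping the Gronwall argument closed. The first approach to try is to estimate the pointwise-in-time quantity $\sup_t\EE^u|x_t|^\kappa$, which is what the statement requires, rather than $\EE\sup$. This avoids needing a maximal inequality for the $W$-convolution and only uses the BDG inequality at fixed $t$.
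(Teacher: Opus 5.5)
Your proof is correct and has the same skeleton the paper only sketches. It uses BDG-type bounds on the mild form plus a Gr\"onwall argument for $x^u$, the standard It\^o estimate on $e^{\beta t}|y_t|^2$ for the BSDE with jumps (as in Tang--Li and Situ), and boundedness of $h$ for the moments of $\rho^u$.

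The one real variation is in the forward step. You bound $\EE^u|x^u_t|^\kappa$ at fixed $t$ and close with Henry's weakly singular Gr\"onwall lemma. The paper instead appeals to maximal inequalities for stochastic convolutions: factorization for the $W$-term and the contraction-semigroup maximal inequality for the $\wdt N$-term, as in \thmref{thm:sup-norm}. Your route is lighter. At fixed $t$ you only need BDG/Kunita for the martingale $r\mapsto\int_0^r e^{(t-s)A}(\cdots)$ evaluated at $r=t$, so you use neither factorization nor contractivity, despite your reference to contraction semigroups. The price is that it yields only the $\sup_t\EE^u$ bound the lemma states, not the stronger $\EE^u\sup_t$ bound that maximal inequalities give.

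Two points should be tightened.

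First, the a priori integrability you invoke is known only under $\PP$, since the solution of \eqref{sys1} is constructed there. So two things you use are not automatic:
\begin{itemize}
\item the local integrability of $t\mapsto\EE^u|x^u_t|^\kappa$ needed for Henry's lemma;
\item the true $\QQ^u$-martingale property of the stochastic integrals in your It\^o argument for $y^u$.
\end{itemize}
The first follows from $\EE^u[\zeta]=\EE[\rho_T^u\zeta]\le(\EE|\rho_T^u|^2)^{1/2}(\EE\zeta^2)^{1/2}$ together with $x^u\in\HH_{2\kappa}(T)$ under $\PP$. For the second, localize, then remove the localization using $\EE^u\sup_t|y^u_t|^2<\infty$. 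You get that bound the same way from an $L^p(\PP)$ bound on $\sup_t|y^u_t|$ for some $p>2$, which is available because $f(x^u_T)$ and the driver at zero have all $\PP$-moments.

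Second, your caveat about $\pi(\Xi)=\infty$ does not go far enough. Since $g$ depends on $\al$ only through $\ga(\al)$, the map $\int_\Xi g\,d\pi$ has Lipschitz constant of order $\pi(\Xi)$ in $(y,z,r)$ and of order $\pi(\Xi)^{1/2}$ in $\ga$ for the $L^2(\Xi;\pi)$-norm. So the Lipschitz property you use, and indeed the well-posedness of \eqref{sys1}, tacitly requires $\pi(\Xi)<\infty$. This is an imprecision in the paper's setting rather than in your argument.
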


\begin{proof}
The estimate $x_t^u$ in $\CH_\kappa(T)$ follows from standard arguments based on the Burkholder-Davis-Gundy (BDG) inequality and maximal inequality for stochastic convolutions driven by cylindrical Wiener process and PRMs; see \cite[Theorem 2.7]{MPR10} and \cite{DZ14}. The estimate for $y^u,z^u,r^u,\ga^u$ follows from \cite{TL94} and \cite{Sit97}. The $\kappa$-th moment estimate for $\rho^u$ is not difficult since $h$ is bounded. Thus, the details are omitted.
\end{proof}

\begin{lem}\label{lem:diff}
%Let
Assume that Assumption \ref{ass} holds. For any $v\in\CU_{\text{ad}}$ and $\forall\, \kappa \geq 2$,
\bea\ad
\sup_{t\in[0,T]} \wdh \EE|x_t^\e-\wdh x_t|^\kappa \leq C\e^\kappa, \quad \sup_{t\in [0,T]}\wdh \EE|y_t^\e-\wdh y_t|^2 \leq C \e^2, \quad \sup_{t\in [0,T]} \wdh \EE|\rho^\e_t-\wdh \rho_t|^2 \leq C \e^2, \\
\ad \wdh \EE \Big [\int_0^T \|z^\e_t-\wdh z_t\|_{\CL_2(H; \rr^d)}^2 +|r_t^\e- \wdh r_t|^2 dt +\int_{\Xi_T} |\ga_t^\e(\al)- \wdh \ga_t(\al)|^2 \pi(d\al)dt \Big] \leq C\e^2.
\eea
\end{lem}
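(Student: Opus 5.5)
The plan is to work under the reference measure $\PP$, where $Y$ is a standard Brownian motion independent of $W$ and $N$. There the system \eqref{sys1} has coefficients that depend on the control only through $u$, and not through the law of $Y$. We prove the $\PP$-estimates first and then pass to $\wdh\EE$ by H\"older's inequality. Since $h$ is bounded, for every $q\geq1$ we have $\EE|\wdh\rho_T|^q<\infty$ and $\EE|\wdh\rho_T|^{-q}<\infty$. Hence, for any nonnegative random variable $\Xi$,
\bed
\wdh\EE\,\Xi=\EE[\wdh\rho_T\,\Xi]\leq (\EE\wdh\rho_T^2)^{1/2}(\EE\,\Xi^2)^{1/2}.
\eed
It therefore suffices to prove every bound under $\PP$ with the exponent doubled: order $\e^{2\kappa}$ for $\EE|x^\e_t-\wdh x_t|^{2\kappa}$, and order $\e^4$ for the fourth-moment versions of the $y,z,r,\ga,\rho$ differences. \lemref{lem:mom} applies equally under $\PP$, since the argument does not use the particular measure. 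This gives uniform moment bounds for $x^\e,\wdh x$ of every order, because $\sup_t\EE|u^\e_t|^\kappa<\infty$ for admissible controls.

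\textbf{Step 1 (forward SPDE).} Write $\dl x_t:=x^\e_t-\wdh x_t$ using the mild formulation \eqref{xu-mild}. Every coefficient difference splits as $\zeta(x^\e,u^\e)-\zeta(\wdh x,\wdh u)$. By the mean value theorem in Gâteaux form (H1)--(H6), (H8), each such difference is bounded by $C(|\dl x|+\e|v|)$. For $G_1$ we use instead the weighted bound $(t-s)^{-\vartheta}$ from (H2). We then estimate:
\begin{itemize}
\item the drift term, including $G_2^jh^j$, which is Lipschitz because $h$ and its derivative are bounded and $G_2$ has linear growth, by Hölder's inequality;
\item the $W$-convolution by the Burkholder inequality for stochastic convolutions, giving $\EE\big(\int_0^t (t-s)^{-2\vartheta}|\dl x_s|^2+\e^2|v_s|^2\,ds\big)^{\kappa/2}$;
\item the $Y$-convolution by BDG;
\item the jump convolution by the maximal inequality of \cite{MPR10} for contraction semigroups, which needs both the $L^2(\Xi)$ and $L^\kappa(\Xi)$ bounds in (H6).
\end{itemize}
Since $2\vartheta<1$, a generalized (singular-kernel) Gronwall lemma, applied after a Hölder split with exponent close to $1/(2\vartheta)$, yields $\sup_t\EE|\dl x_t|^\kappa\leq C\e^\kappa\sup_t\EE|v_t|^\kappa$.

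\textbf{Step 2 (density and BSDE).} For $\rho$, note that
\bed
\rho^\e_t-\wdh\rho_t=\int_0^t(\rho^\e_s-\wdh\rho_s)h(s,x^\e_s,u^\e_s)\,dY_s+\int_0^t\wdh\rho_s\big(h(s,x^\e_s,u^\e_s)-h(s,\wdh x_s,\wdh u_s)\big)\,dY_s.
\ed
We bound the second term through the moments of $\wdh\rho$ and Step 1, then apply BDG and Gronwall. For the BSDE, the difference $(\dl y,\dl z,\dl r,\dl\ga)$ solves a linear BSDE with jumps. Its terminal value $f(x^\e_T)-f(\wdh x_T)$ is of order $\e$ in $L^p$ by (H9). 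Its generator is Lipschitz in $(y,z,r,\ga)$ by (H7), with an inhomogeneous part of order $|\dl x|+\e|v|$. The standard a priori estimate from \cite{TL94,Sit97}, extended to Hilbert–Schmidt $z$ and to $L^p$ for $p>2$, gives the bound in $L^4$ (or $L^{2+\delta}$) under $\PP$. The transfer to $\wdh\EE$ then follows by Hölder as above. We need some $p>2$ only because of this change of measure.

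\textbf{Main obstacle.} The main difficulty is the $L^p$ a priori estimate for the BSDE with jumps with $p>2$, which is needed to absorb the density $\wdh\rho_T$. For jumps, $L^p$ estimates require care: the $L^p$ BDG inequality for the jump martingale involves $(\int|\ga|^2\,d N)^{p/2}$ rather than the compensator. As a fallback, I would avoid this by working directly under $\wdh\QQ$ for the BSDE part. By Girsanov, $W$, $B$ and $\wdt N$ keep their martingale properties under $\wdh\QQ$. Writing $dY=\wdh h\,dt+dB$ turns the $r\,dY$ term into an extra drift $r\,\wdh h$, which is Lipschitz in $r$ because $h$ is bounded. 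The standard $L^2$ estimate under $\wdh\QQ$ then suffices, needing only $\wdh\EE|\dl x|^2\lesssim\e^2$, which follows from Step 1 and Hölder.
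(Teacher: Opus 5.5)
\textbf{Gap: the drift term $G_2^jh^j$ is not Lipschitz.} The step that fails is in Step 1, where you treat $G_2^jh^j$ as Lipschitz ``because $h$ and its derivative are bounded and $G_2$ has linear growth''. That implication is false. The G\^ateaux derivative of $G_2^jh^j$ is $\nabla G_2^j\,h^j+G_2^j\,\nabla h^j$, and under (H3) the second term is only bounded by $C(1+|x|_H)$. So the drift difference contains $G_2^j(\wdh x_t,\wdh u_t)\big[h^j(t,x^\e_t,u^\e_t)-h^j(t,\wdh x_t,\wdh u_t)\big]$. This is $x^\e_t-\wdh x_t$ multiplied by a random coefficient of size $|\wdh x_t|_H$, and no $L^\kappa$ Gronwall argument closes with such a coefficient. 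Passing to $\wdh\QQ$ does not help: there the drift difference is $F(x^\e_t,u^\e_t)-\wdh F(t)-G_2^j(x^\e_t,u^\e_t)\big[h^j(t,x^\e_t,u^\e_t)-\wdh h^j(t)\big]$, with the same structure.

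\textbf{The estimate itself fails.} This is not a missing technical step: the estimate can fail under Assumption~\ref{ass}. Take $H=U=\rr$, $d=1$, $A=F=G_1=\Theta=0$, $G_2(x,u)=x$, $h(t,x,u)=-\arctan(x-u)$, $x_0=1$, $\wdh u_t=\Upsilon_t:=e^{Y_t-t/2}$ and $v\equiv1$. Then $\wdh x=\Upsilon$ and $\wdh h=0$, so $\wdh\QQ=\PP$. Writing $x^\e=\Upsilon y^\e$, It\^o's formula gives the pathwise ODE $\dot y^\e_t=y^\e_t\arctan(\Upsilon_t(y^\e_t-1)-\e)$ with $y^\e_0=1$. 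Differentiating at $\e=0$ shows that, almost surely, $\e^{-1}(x^\e_t-\wdh x_t)\to x^1_t=-\Upsilon_t\int_0^t\exp\big(\int_s^t\Upsilon_r\,dr\big)ds$.

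Consider the event $\{\Upsilon_r\ge M \text{ for all } r\in[t/2,t]\}$. Its probability is at least $e^{-C(\log M)^2}$ for large $M$, and on it $|x^1_t|\ge\frac t2Me^{Mt/2}$. Hence $\EE|x^1_t|=\infty$ for every $t>0$. By Fatou's lemma, $\e^{-1}\wdh\EE|x^\e_t-\wdh x_t|$ is unbounded as $\e\to0$, so no bound of the form $C\e^\kappa$ can hold. One can even make $\Upsilon$ optimal, for instance with running cost $(x-u)^2$ and all other cost and backward data zero.

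\textbf{Relation to the paper.} The paper's own proof is a single sentence appealing to ``the boundedness of their G\^ateaux derivatives''. It relies on exactly the same unjustified Lipschitz property of $F-G_2^jh^j$, so you are not missing an idea the paper supplies. Rather, the statement needs an extra hypothesis, for example that $G_2^j$ (or $G_2^j\nabla h^j$) is bounded, so that $F-G_2^jh^j$ genuinely has bounded gradient. Under such a hypothesis your Step 1 goes through as written.

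The rest of your plan is a reasonable way to fill in what the paper omits:
\begin{itemize}
\item proving the $\PP$-estimates with doubled exponents and transferring them to $\wdh\EE$ via H\"older's inequality and the moments of $\wdh\rho_T$;
\item treating the BSDE directly under $\wdh\QQ$ with the extra drift $r\,\wdh h$, which avoids $L^p$ estimates for BSDEs with jumps.
\end{itemize}
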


\begin{proof}
The proof is omitted as it follows
the method in \lemref{lem:mom}. The primary distinction lies in applying \assmref{ass} to bound the coefficient differences between $(x_t^\e,u_t^\e)$ and $(\wdh x_t, \wdh u_t)$, utilizing the boundedness of their G\^{a}teaux derivatives.
\end{proof}

\subsection{First variations} We now consider the regular dependence on the parameter $\e$ for the solution $(x^\e,y^\e, z^\e, r^\e, \ga^\e\cd)$. Let us introduce
\beq{X1}\barray
dx_t^{1}&\!\!\!\! = \{A x_t^{1}+ [\nabla_x F(\wdh x_t, \wdh u_t)- \nabla_x G_2^j(\wdh x_t, \wdh u_t) h^j(t,\wdh x_t,\wdh u_t) \\
& \qquad \qquad\quad  - G_2^j(\wdh x_t,\wdh u_t)\nabla_x h^j(t,\wdh x_t,\wdh u_t)] x_t^1\} dt \\
& \; + \{[\nabla_u F(\wdh x_t,\wdh u_t)-\nabla_u G_2^j(\wdh x_t,\wdh u_t) h^j(t,\wdh x_t,\wdh u_t)-G_2^j(\wdh x_t,\wdh u_t) \nabla_u h^j(t,\wdh x_t,\wdh u_t)] v_t \}dt  \\
&\; + \{\nabla_x G_1(\wdh x_t, \wdh u_t) x_t^1 + \nabla_u G_1(\wdh x_t,\wdh u_t) v_t \} dW_t \\
& \; + \{\nabla_x G_2^j(\wdh x_t, \wdh u_t) x_t^1 + \nabla_u G_2^j(\wdh x_t,\wdh u_t)v_t \} dY_t^j \\
& \; + \int_{\Xi} \big\{\nabla_x \Theta(\wdh x_t, \wdh u_t, \al) x_t^1 + \nabla_u \Theta(\wdh x_t, \wdh u_t, \al) v_t \big\} \wdt N(d\al,dt),
\earray\eeq
\beq{y1}\barray
-dy^1_t &\!\!\!\!= \int_{\Xi} \big[\nabla_x g(t,\wdh x_t,\wdh u_t, \wdh y_t,\wdh z_t, \wdh r_t, \wdh \ga_t(\al)) x_t^1 + \nabla_u g(t,\wdh x_t,\wdh u_t, \wdh y_t, \wdh z_t, \wdh r_t, \wdh \ga_t(\al)) v_t \\
& \qquad + \nabla_y g(t,\wdh x_t,\wdh u_t, \wdh y_t, \wdh z_t, \wdh r_t, \wdh \ga_t(\al)) y_t^1   +\nabla_z g(t,\wdh x_t,\wdh u_t, \wdh y_t, \wdh z_t, \wdh r_t, \wdh \ga_t(\al)) z_t^1 \\
& \qquad+ \nabla_r g(t,\wdh x_t,\wdh u_t, \wdh y_t, \wdh z_t, \wdh r_t, \wdh \ga_t(\al)) r_t^1  \\
& \qquad + \nabla_\ga g(t,\wdh x_t,\wdh u_t, \wdh y_t, \wdh z_t, \wdh r_t, \wdh \ga_t(\al)) \ga_t^1(\al) \big] \pi(d\al) dt \\
& \; - z_t^1 dW_t - r_t^1 dY_t  - \int_{\Xi} \ga_t^1(\al) \wdt N(d\al,dt),
\earray\eeq
with $x_0^1 = 0$ and $y_T^1 = \nabla_x f(\wdh x_T) x_T^1$. Moreover, we consider
\beqa{rho1}
d\rho_t^1 =\big\{\rho_t^1 h(t,\wdh x_t, \wdh u_t)+ \rho_t \big[\nabla_x h(t, \wdh x_t, \wdh u_t) x_t^1 +\nabla_u h(t, \wdh x_t,\wdh u_t) v_t\big] \big\}dY_t, \quad \rho_0^1 = 0.
\eeqa

Define $\La:=\rho^1 \wdh \rho^{-1}$, where $\wdh{\rho}^{-1}$ denotes the inverse of $\wdh \rho$ satisfying 
\eqref{eq-rho} with $(x_t^u,u_t)$ replaced by $(\wdh x_t, \wdh u_t)$. Applying the \ito formula yields
\beq{eq-Ga}\barray
d\La_t \ad= [\nabla_x h(t,\wdh x_t,\wdh u_t) x_t^1+\nabla_u h(t,\wdh x_t, \wdh u_t)v_t ] (dY_t - h(t,\wdh x_t, \wdh u_t)dt) \\
\ad = [\nabla_x h(t,\wdh x_t, \wdh u_t) x_t^1 +\nabla_u h(t,\wdh x_t,\wdh u_t)v_t ] dB_t, \quad \La_0=0.
\earray\eeq

\begin{thm}\label{thm:x-rho-y1}
Let Assumption \ref{ass} hold. The linear SPDE with jumps \eqref{X1} admits a unique \cadlag mild solution $x^1\in \HH_2(T)$;
equations \eqref{y1} and \eqref{rho1}
(resp.)
admit unique strong solutions
such that $y^1\in C([0,T];L^2(\Omega;\rr^d))$ and $(z^1,r^1,\ga^1\cd) \in L_\CP^2(\Omega\times[0,T];\CL_2(H;\rr^d)\times \rr^{d\times d} \times L^2(\Xi;\rr^d))$. Moreover, we have $
\EE|x^1_t|^\kappa < +\infty$ and $\EE|\rho_t^1|^\kappa <+ \infty$ for any $\kappa \geq 2$.
\end{thm}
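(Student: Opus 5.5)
The three equations are decoupled in the order \eqref{X1} $\to$ \eqref{y1}, \eqref{rho1}. We treat them in that order: first $x^1$ by a fixed point argument for the mild formulation, then the linear BSDE with jumps for $(y^1,z^1,r^1,\ga^1\cd)$, and finally $\rho^1$ as a scalar linear SDE.

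\emph{Step 1: the linear SPDE \eqref{X1}.} We write \eqref{X1} in mild form
\begin{equation*}
x_t^1=\int_0^t e^{(t-s)A}\big[a_s x_s^1+b_s v_s\big]ds+\int_0^t e^{(t-s)A}\big[\nabla_x\wdh G_1(s)x_s^1+\nabla_u\wdh G_1(s)v_s\big]dW_s+\cdots,
\end{equation*}
where $a_s$ and $b_s$ collect the bounded drift operators. The dots stand for the $dY^j$ and $\wdt N$ convolutions. On $\HH_\kappa([0,T_0])$ we consider the map $\Gamma$ sending a process $X$ to the right-hand side with $x^1$ replaced by $X$.

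The coefficient bounds are as follows.
\begin{itemize}
\item The drift coefficients are bounded in $\CL(H)$ and $\CL(U,H)$ by (H1), (H3) and (H8).
\item For the $W$-convolution we use (H2): $|e^{(t-s)A}\nabla_x\wdh G_1(s)X_s|_{\CL_2(H)}\le C(t-s)^{-\vartheta}|X_s|$. Since $2\vartheta<1$, the factorization method (or the maximal inequality for stochastic convolutions of a contraction semigroup) gives $\EE\sup_{t\le T_0}|\cdot|^\kappa\le C T_0^{\delta}\EE\sup|X|^\kappa$ for some $\delta>0$.
\item For the $dY^j$ term we use (H3).
\item For the jump term we use the maximal inequality of \cite{MPR10} together with (H6), with both the $L^2(\Xi)$ and $L^\kappa(\Xi)$ bounds (Kunita/BDG inequality for $\kappa\ge2$).
\end{itemize}
The $v$-terms are finite because $v\in\CU_{\text{ad}}$ has all moments. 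Hence $\Gamma$ maps $\HH_\kappa$ into itself and is a contraction for small $T_0$. Iterating over $[0,T]$ gives a unique c\`adl\`ag mild solution $x^1\in\HH_\kappa(T)\subset\HH_2(T)$, and in particular $\EE|x^1_t|^\kappa<\infty$. The \cadlag property comes from the contraction semigroup via \cite{MPR10}.

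\emph{Step 2: the linear BSDE \eqref{y1}.} This is a linear BSDE with jumps on the filtration generated by $W,Y,N$.
\begin{itemize}
\item The generator is Lipschitz in $(y,z,r,\ga)$ with bounded derivatives by (H7).
\item The free term $\int_\Xi[\nabla_x\wdh g\,x^1+\nabla_u\wdh g\,v]\pi(d\al)$ lies in $L^2$, by Step 1 and $v\in\CU_{\text{ad}}$.
\item The terminal value $\nabla_xf(\wdh x_T)x^1_T\in L^2$ by (H9).
\end{itemize}
The existence and uniqueness results of \cite{TL94,Sit97}, with $z$ restricted to $\CL_2(H;\rr^d)$ as remarked after \eqref{bsde-jump}, then give the stated solution. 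A small point needs care here: the integration over $\pi(d\al)$ of $\nabla_\ga g\,\ga^1(\al)$ requires $\pi$-integrability in a suitable sense, and this must be interpreted as in the well-posedness of \eqref{sys1}.

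\emph{Step 3: the linear SDE \eqref{rho1}.} This is a scalar linear SDE with bounded coefficient $h$ and forcing $\wdh\rho[\nabla_xh\,x^1+\nabla_uh\,v]$. The forcing has all moments by Lemma \ref{lem:mom}, Step 1 and Hölder's inequality. Existence and uniqueness follow from standard theory, or explicitly by the variation-of-constants formula through $\La=\rho^1\wdh\rho^{-1}$ in \eqref{eq-Ga}. BDG and Gronwall then give $\EE|\rho_t^1|^\kappa<\infty$.

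The main obstacle is Step 1: the $W$-convolution with the singular kernel $(t-s)^{-\vartheta}$ must be combined with the jump maximal inequality in the supremum norm. We handle it through the factorization method, or by first proving the estimate in $\sup_t\EE|\cdot|^\kappa$ and upgrading it afterwards.
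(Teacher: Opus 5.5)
Your route is the paper's. The paper gets $x^1$ from \thmref{thm:sup-norm}, which is a fixed point in $\HH_\kappa$ using factorization for the $W$-convolution, BDG for the $B^j$ term, the maximal inequality of \cite{MPR10} for the jumps, and Gr\"onwall. It then dismisses $(y^1,z^1,r^1,\ga^1)$ and $\rho^1$ as standard linear BSDE/SDE theory, exactly as in your Steps 2--3.

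\textbf{The gap.} One claim in Step 1 is false, and it is exactly what makes your contraction close. The drift of \eqref{X1} contains the term $-G_2^j(\wdh x_t,\wdh u_t)\,\nabla_x h^j(t,\wdh x_t,\wdh u_t)\,x_t^1$, and (H3) does not make $G_2^j$ bounded. A bounded gradient together with $|G_2^j(0,u)|\le C$ only gives $|G_2^j(x,u)|\le C(1+|x|_H)$; for instance $G_2^j(x,u)=x$ is admissible. So this finite-rank operator has $\CL(H)$-norm up to $C(1+|\wdh x_t|_H)$, which is random and unbounded. (The $v$-forcing $G_2^j\nabla_u h^j v_t$ is also unbounded, but that is harmless because it has all moments.)

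Consequently the bound $\EE\sup_{t\le T_0}|\int_0^t e^{(t-s)A}a_sX_s\,ds|^\kappa\le C T_0^\kappa\,\EE\sup_{t\le T_0}|X_t|^\kappa$ is not available. H\"older only controls the left side by $(\EE\sup|X|^{\kappa p})^{1/p}$ for some $p>1$, so you cannot conclude that $\Gamma$ maps $\HH_\kappa$ into itself, let alone that it contracts.

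Localizing at $\tau_N=\inf\{t:|\wdh x_t|_H>N\}$ does give a unique c\`adl\`ag mild solution pathwise. But the resulting Gr\"onwall bound has the form $\exp\big(C\int_0^T(1+|\wdh x_t|_H)\,dt\big)$, and exponential moments of $\int_0^T|\wdh x_t|_H\,dt$ do not follow from \assmref{ass}. So $x^1\in\HH_2(T)$ and $\EE|x^1_t|^\kappa<\infty$ are not established by your argument.

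The paper has the same tacit gap: \thmref{thm:sup-norm} assumes $\CQ^1$ bounded, yet it is applied with $\CQ^1=\wdh O_t=\nabla_x\wdh F(t)-\wdh G_2^j(t)\nabla_x\wdh h^j(t)$. To make either proof correct you need an extra hypothesis, for example $G_2^j$ bounded (or $G_2^j\nabla_x h^j$ bounded).

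\textbf{Two smaller corrections.}
\begin{itemize}
\item The contraction-semigroup maximal inequality is not an alternative for the $W$-term. The integrand $\nabla_x\wdh G_1(s)X_s$ is not Hilbert--Schmidt; only $e^{rA}\nabla_x\wdh G_1(s)X_s$ is. So factorization is required.
\item Factorization gives a supremum bound only when $1/\kappa<\delta<1/2-\vartheta$, i.e.\ $\kappa>2/(1-2\vartheta)$. The case $\HH_2(T)$ must therefore be deduced from large $\kappa$. Uniqueness in $\HH_2$ then needs a separate $\sup_t\EE|\cdot|^2$ estimate with the singular Gr\"onwall lemma, as in \propref{prop:mom-2}.
\end{itemize}
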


\begin{proof}
The proof for the existence and uniqueness of $x^1$ follows from \thmref{thm:sup-norm}, which will be presented in the next section; see also \cite{MPR10} for details.
The results for that of $(y^1,z^1,r^1,\ga^1)$ and $\rho^1$ are well-known since all coefficients in \eqref{y1} and \eqref{rho1} are bounded. Thus, the details are omitted.
\end{proof}

As a consequence, we have the following expansion of the cost.

\begin{prop}\label{prop:var-cost}
We have $J(u^\e)=J(\wdh u)+ \e I(v)+o(\e)$, where $I(v)$ is given by
\bea
I(v)\ad\!\! := \wdh \EE \big[\nabla_x \phi(\wdh x_T)x_T^1 + \nabla_y \psi(\wdh y_0) y_0^1 + \phi(\wdh x_T) \La_T \big]  \\
\aad + \wdh \EE\int_{\Xi_T} \Big[ \La_t \wdh L(t,\al) + \nabla_u \wdh L(t,\al)v_t + \nabla_x \wdh L(t,\al) x_t^1 + \nabla_y \wdh L(t,\al) y_t^1  \\
\ad\qquad\qquad + \nabla_z \wdh L(t,\al) z_t^1 + \nabla_r \wdh L(t,\al)r_t^1 + \nabla_\ga \wdh L(t,\al)\ga_t^1(\al) \Big] \pi(d\al) dt.
\eea
\end{prop}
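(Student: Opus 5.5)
Working under $\PP$ via \eqref{J-PP}, we write $J(u^\e)-J(\wdh u)=\Delta_1+\Delta_2+\Delta_3$. Here
\[
\Delta_1=\EE\big[\rho_T^\e\phi(x_T^\e)-\wdh\rho_T\phi(\wdh x_T)\big],
\]
$\Delta_2$ is the analogous difference of the running-cost integrals $\int_{\Xi_T}\rho_t L(\cdot)\pi(d\al)dt$, and $\Delta_3=\psi(y_0^\e)-\psi(\wdh y_0)$. Each product difference is split as
\[
\rho^\e_T\phi(x^\e_T)-\wdh\rho_T\phi(\wdh x_T)=(\rho^\e_T-\wdh\rho_T)\phi(\wdh x_T)+\rho^\e_T\big(\phi(x^\e_T)-\phi(\wdh x_T)\big),
\]
and similarly for $L$.

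\textbf{Step 1: first-order expansions.} We show that
\[
\tilde x^\e:=\e^{-1}(x^\e-\wdh x)-x^1,\qquad \tilde\rho^\e:=\e^{-1}(\rho^\e-\wdh\rho)-\rho^1,
\]
and the analogous quantities $\tilde y^\e,\tilde z^\e,\tilde r^\e,\tilde\ga^\e$ tend to zero in the norms of \lemref{lem:diff}. For $x$, write the mild equation for $\tilde x^\e$. Each coefficient difference is expressed by the mean value formula, e.g.
\[
F(x^\e,u^\e)-F(\wdh x,\wdh u)=\int_0^1\big[\nabla_xF(\theta_\lambda)(x^\e-\wdh x)+\nabla_uF(\theta_\lambda)\,\e v\big]d\lambda,
\]
with $\theta_\lambda$ on the segment between $(\wdh x,\wdh u)$ and $(x^\e,u^\e)$. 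Then $\tilde x^\e$ solves the linear equation \eqref{X1} with the new unknown, plus remainder terms of the form $\int_0^1[\nabla F(\theta_\lambda)-\nabla\wdh F]\,d\lambda\,(x^1,v)$. The same holds for $G_1$ (using the smoothing bound $s^{-\vartheta}$ in (H2) together with the stochastic convolution estimate for $W$), for $G_2^jh^j$, $G_2^j$ and $\Theta$ (using the $L^2(\Xi)$ and $L^\kappa(\Xi)$ bounds in (H6) with BDG/maximal inequalities for $\wdt N$).

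The remainders tend to zero by dominated convergence. The derivatives are bounded, they converge strongly along $x^\e\to\wdh x$ (which holds by \lemref{lem:diff}) because the maps are $\CG^1$, and $x^1,v$ have all moments by \thmref{thm:x-rho-y1}. A Gronwall argument then gives $\sup_t\EE|\tilde x^\e_t|^2\to0$, and in fact the $\kappa$-th moments also vanish. The term $\rho$ is handled identically from \eqref{eq-rho} and \eqref{rho1}, using boundedness of $h$ and $\nabla h$. For the BSDE, the standard a priori estimate for BSDEs with jumps (\cite{TL94,Sit97}), applied to the equation for $\tilde y^\e$, reduces the problem to the convergence of the terminal remainder $\e^{-1}(f(x^\e_T)-f(\wdh x_T))-\nabla f(\wdh x_T)x^1_T$ and of the driver remainders in $L^2$. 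Both follow from the same dominated convergence argument, using the convergence of $\tilde x^\e$ just established.

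\textbf{Step 2: assembling the expansion.} Divide $\Delta_1,\Delta_2,\Delta_3$ by $\e$ and let $\e\to0$. This gives
\[
\EE\big[\rho^1_T\phi(\wdh x_T)+\wdh\rho_T\nabla\phi(\wdh x_T)x^1_T\big]+\nabla\psi(\wdh y_0)y^1_0
\]
plus the analogous running-cost terms. Passing to the limit requires uniform integrability of products such as $\rho^\e_T\,\e^{-1}(\phi(x^\e_T)-\phi(\wdh x_T))$, for which we use Hölder's inequality with the moment bounds on $\rho$ and $x$. Finally, since $\La=\rho^1\wdh\rho^{-1}$, we have
\[
\EE[\rho^1_T\phi(\wdh x_T)]=\wdh\EE[\La_T\phi(\wdh x_T)],\qquad \EE[\wdh\rho_T\,\cdot\,]=\wdh\EE[\,\cdot\,],
\]
and likewise inside the time integral, using that $\wdh\rho$ is a martingale and the integrands are adapted. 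This produces exactly $I(v)$.

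\textbf{Main obstacle.} The hardest part is controlling the cost remainders when only Gâteaux differentiability and the integrability in \assmref{ass:cost} are assumed for $L$ and $\phi$. No growth bounds on $\nabla L$ or $\nabla\phi$ are given, so uniform integrability of the difference quotients is not automatic. We would implicitly use, or state explicitly, that $\nabla L$ and $\nabla\phi$ have at most linear growth, consistent with the example in the motivation. Then the difference quotients are dominated by $C(1+|x^\e|+|\wdh x|+\cdots)(|\tilde x^\e|+|x^1|+\cdots)$ times $\rho$-factors, and these are uniformly integrable by \lemref{lem:mom} and \lemref{lem:diff}. The $z$ and $\ga$ components of $L$ are only square integrable, so for those terms the convergence is in $L^1$ via the $L^2$ convergence of $\tilde z^\e$ and $\tilde\ga^\e$ from Step 1.
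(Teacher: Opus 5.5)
Your argument follows the paper's proof. You use the same splitting of $J(u^\e)-J(\wdh u)$ into $\rho^\e_T[\phi(x^\e_T)-\phi(\wdh x_T)]+(\rho^\e_T-\wdh\rho_T)\phi(\wdh x_T)+\psi(y^\e_0)-\psi(\wdh y_0)$ plus the analogous running-cost terms, and the same remainders $\tilde x^\e,\tilde\rho^\e,\dots,\tilde\ga^\e$. The paper simply asserts their convergence from \lemref{lem:diff} and \thmref{thm:x-rho-y1}, whereas you sketch the mean-value/Gr\"onwall argument. You also use the same integral form of the mean value theorem with dominated convergence, and the identification $\EE[\rho^1_t\,\cdot\,]=\wdh\EE[\La_t\,\cdot\,]$. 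You are right that \assmref{ass:cost} alone does not justify passing to the limit. The paper closes this by silently invoking ``continuity and boundedness of $\nabla_\iota\wdh L$''.

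The step that fails as written is your substitute hypothesis. Linear growth of $\nabla L$ is harmless in the arguments $(x,u)$, which have moments of all orders. It is not harmless in the backward arguments $(y,z,r,\ga)$. \lemref{lem:mom} and \lemref{lem:diff} control these only in $L^2$, and the weight $\rho^\e$ (equivalently $\rho^\e/\wdh\rho$ under $\wdh\QQ$) is unbounded.

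For instance, suppose $\nabla_zL$ grows linearly in $z$. Then the term $\rho^\e_t|z^\e_t|\,|z^1_t+\tilde z^\e_t|$ is a product of two merely square-integrable factors with an unbounded weight, and its uniform integrability cannot be obtained from those estimates. Moreover, $L$ is then quadratic in $z$, so $\wdh L$ is only in $L^1$, as \assmref{ass:cost} guarantees. Consequently $\EE\int\tilde\rho^\e_t\,\wdh L(t,\al)\,\pi(d\al)dt$ need not tend to zero, and even $\wdh\EE[\La_t\wdh L(t,\al)]$ in $I(v)$ may be undefined.

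Your closing Cauchy--Schwarz remark needs $\rho^\e\nabla_\iota L$ to stay bounded in $L^2$ for $\iota=y,z,r,\ga$. That holds if these derivatives are bounded or grow only in $(x,u)$, but not under growth in the backward variables. You have two ways to fix this. Either adopt the paper's bounded-derivative hypothesis in $(y,z,r,\ga)$ (growth in $(x,u)$ is fine). Or first prove $L^p$ estimates, $p>2$, for $(y^\e,z^\e,r^\e,\ga^\e)$ and their difference quotients, which the cited lemmas do not provide.
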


\begin{proof}
Define $\wdt x_t^{1,\e}:=\e^{-1}(x_t^\e-\wdh x_t)- x_t^1$. By \lemref{lem:diff} and \thmref{thm:x-rho-y1}, we have
$|\wdt x^{1,\e}|_{\HH_2(T)} \to 0$ as $\e\to 0$.  Similarly, define $\wdt \rho_t^{1,\e}, \wdt y_t^{1,\e}, \wdt z_t^{1,\e}, \wdt r_t^{1,\e}$, and $\wdt \ga_t^{1,\e}(\al)$. Then,
$|\wdt \rho^{1,\e}|_{L^2(\Omega;C([0,T];\rr))} +|\wdt y^{1,\e}|_{C([0,T];L^2(\Omega;\rr^d))} \to 0$ and $|\wdt z^{1,\e}|_{L^2(\Omega \times [0,T]; \CL_2(H;\rr^d))} + |\wdt r^{1,\e}|_{L^2(\Omega\times [0,T]; \rr^{d\times d})}+ |\wdt \ga^{1,\e}\cd|_{L^2(\Omega; L^2([0,T]\times \Xi; \rr^d))}$ goes to zero, where the measure on $L^2([0,T]\times \Xi;\rr^d)$ is $\pi(d\al)dt$. For $\iota=x,u,y,z,r,\ga$, define
$\nabla_\iota \wdh L(t,\al,\la\e):= \nabla_\iota L(t,\wdh x_t+\la\e (x_t^1+\wdt x_t^{1,\e}), \wdh u_t+\la\e v_t, \wdh y_t+ \la \e(y_t^1+\wdt y_t^{1,\e}), \wdh z_t +\la \e (z_t^1 + \wdt z_t^{1,\e}), \wdh r_t +\la \e (r_t^1 + \wdt r_t^{1,\e}), \wdh \ga_t(\al)+\la \e (\ga_t^1(\al)+  \wdt \ga_t^{1,\e}(\al)))$.

From \eqref{J-PP}, a change of measure
yields
\beqa{var-cost}\ad
J(u^\e)-J(\wdh u) =\EE \bigg\{\rho_T^\e \big[\phi(x_T^\e)-\phi(\wdh x_T) \big] + (\rho_T^\e- \wdh \rho_T) \phi(\wdh x_T) + \psi(y_0^\e)- \psi(y_0)\\
\aad\qquad\qquad\qquad\quad + \int_{\Xi_T} \big[(\rho_t^\e- \wdh \rho_t) \wdh L(t,\al) +\rho_t^\e \big(\wdh L(t,\al,\e) - \wdh L(t,\al) \big)\big]\pi(d\al) dt \bigg\}\\
\ad = \e\, \wdh \EE \bigg[\phi(\wdh x_T) \La_T + \nabla_x \phi(\wdh x_T) x_T^1 + \nabla_y \psi(y_0) y_0^1 +\int_0^T \La_t \wdh L(t,\al) dt \bigg]+o(\e) \\
\aad \quad +\e\, \wdh \EE \int_{\Xi_T} \int_0^1 \Big[\nabla_x \wdh L(t,\al,\la\e)(x_t^1+ \wdt x_t^{1,\e})\Big] d\la\, \pi(d\al) dt \\
\aad \quad + \e\, \wdh \EE \int_{\Xi_T} \int_0^1 \Big[\nabla_y \wdh L(t,\al,\la\e)(y_t^1+ \wdt y_t^{1,\e})\Big]d\la\, \pi(d\al) dt \\
\aad \quad +\e\, \wdh \EE \int_{\Xi_T} \int_0^1 \Big[\nabla_z \wdh L(t,\al,\la\e)(z_t^1+ \wdt z_t^{1,\e})\Big]d\la\,\pi(d\al) dt \\
\aad \quad +\e\, \wdh \EE \int_{\Xi_T}  \int_0^1 \Big[\nabla_r \wdh L(t,\al,\la\e)(r_t^1+ \wdt r_t^{1,\e})\Big]d\la\, \pi(d\al) dt \\
\aad \quad +\e\, \wdh \EE \int_{\Xi_T}  \int_0^1 \Big[\nabla_\ga \wdh L(t,\al,\la\e)(\ga_t^1(\al)+ \wdt \ga_t^{1,\e}(\al)) \Big]d\la\, \pi(d\al) dt \\
\aad \quad +\e\, \wdh \EE \int_{\Xi_T} \int_0^1 \nabla_u \wdh L(t,\al,\la\e) v_t d\la\, \pi(d\al) dt.
\eeqa
By the continuity and boundedness of $\nabla_\iota \wdh L$, the dominated convergence theorm implies
\bea&
\wdh \EE \int_{\Xi_T} \int_0^1 \nabla_x \wdh L(t,\al,\la\e) x_t^1 d\la\, \pi(d\al) dt \to \wdh \EE \int_{\Xi_T} \nabla_x \wdh L(t,\al) x_t^1 \pi(d\al)dt \\
& \wdh \EE \int_{\Xi_T} \int_0^1 \nabla_x \wdh L(t,\al,\la \e ) \wdt x_t^{1,\e} d\la\, \pi(d\al) dt  \to 0.
\eea
Similar results apply
to other terms in \eqref{var-cost}. The proof is complete.
\end{proof}

\subsection{Adjoint equations}\label{sec:adjoint} We now fix an orthonormal basis $\{e_i\}_{i\in \NN}$ in $H$ such that
\assmref{ass} (H4) holds.
For any $\chi \in H$, define \beq{Ki}\barray
K_i(t)\chi := \nabla_x[G_1(\wdh x_t, \wdh u_t)e_i]\chi=\nabla_x[\wdh G_1(t)e_i]\chi \text{ and }
\Ga_t \chi :=\nabla_u[G_1(\wdh x_t, \wdh u_t)\chi]v_t.
\earray\eeq

\begin{ass}\label{ass1} Assume that
$e^{tA}\in \CL_2(H)$ for all $t>0$ and there exists constants $C>0$ and $\vartheta <1/2$ such that $|e^{tA}|_{\CL_2(H)} \leq C t^{-\vartheta}, \forall\, t\in (0,T]$.
\end{ass}

\begin{rem}\label{rem:est-Ki}\rm{
By \assmref{ass1}, we have $|K_i(t)|_{\CL(H)} \leq C$ and (H2) implies
\bea\ad\!\!\!\!\!
\sum_{i=1}^\infty |e^{tA} K_i(s) \chi|_H^2 =\sum_{i=1}^\infty |\nabla_x [e^{tA}\wdh G_1(s)e_i]\chi |_H =|\nabla_x \big[e^{tA} \wdh G_1(s) \chi\big]|_{\CL_2(H)}^2 \leq C t^{-2\vartheta} |\chi|_H^2.
\eea
for all $t>0, s\geq 0, \chi\in H$; see \cite[Remark 2.2 and p. 265]{FHT18}.
}
\end{rem}

Define $\beta_t^i:=\qv{e_i, W_t}_H, i=1,2,\dots$.
Then $\{\beta^i\}$ is a family of independent real-valued  standard Brownian motions. We now introduce the adjoint equations on the probability space $(\Omega,\CF,\CF_t,\wdh \QQ)$. The first adjoint processes $\ell_t$ satisfies
\beqa{ellt}
\!\!\! d\ell_t\ad\!\!\! = \!\! \int_{\Xi}\!\big[
\nabla_y \wdh g(t,\al)\ell_t - \nabla_y \wdh L(t,\al)\big] \pi(d\al) dt +\! \int_{\Xi}\! \big[\nabla_z^* \wdh g(t,\al) \ell_t - \nabla_z \wdh L(t,\al)\big] \pi(d\al) dW_t \\
\aad + \int_{\Xi} \big[\nabla_r \wdh g(t,\al)\ell_t - \wdh h(t) \ell_t - \nabla_r \wdh L(t,\al) \big] \pi(d\al) dB_t \\
\aad + \int_{\Xi} \big[\nabla_\ga \wdh g(t,\al) \ell_{t} - \nabla_\ga \wdh L(t,\al)\big] \wdt N(d\al,dt), \quad \ell_0=-\nabla_y \psi(y_0).
\eeqa
The adjoint equations for the unknown processes $(P,Q_{1,t},Q_{2,t}^j,Q_{3,t}\cd)$ are formally given by
\beqa{Pt}\barray
\!\!\!\! -dP_t &\!\!\!\! \disp = \Big\{ A^* P_t + \big[\nabla_x \wdh F(t)-\wdh G_2^j(t) \nabla_x \wdh h^j(t)\big] P_t + \sum_{i=1}^\infty K_i^*(t) Q_{1,t} e_i \\
& \!\!\!\! + \nabla_x^* \wdh G_2^j(t) Q_{2,t}^j + \nabla_x^* \wdh h^j (t) q_{2,t}^j \Big\} dt -\! \int_{\Xi} \big[ \nabla_x^* \wdh g(t,\al) \ell_t + \nabla_x \wdh L(t,\al) \big] \pi(d\al) dt\\
&\!\! \disp - \sum_{i=1}^\infty Q_{1,t}e_i d\beta_t^i - Q_{2,t}^j dB_t^j - \int_{\Xi} Q_{3,t}(\al) \wdt N(d\al,dt)
\earray\eeqa
with terminal condition $P_T = \nabla_x \phi(\wdh x_T)-\nabla_x^* f(\wdh x_T)\ell_T$. Finally, the auxiliary adjoint process
$(p,q_{1,t}, q_{2,t}^j, q_{3,t}\cd)$
satisfies
\beqa{pt}\ad\!\!\!
- dp_t = \int_{\Xi} \wdh L(t,\al) \pi(d\al)dt - q_{1,t}dW_t -q_{2,t}^j dB_t^j - \int_{\Xi} q_{3,t}(\al) \wdt N(d\al,dt), \; p_T = \phi(\wdh x_T).
\eeqa

Formally, $P_t, p_t, \ell_t$ are interpreted as adjoint
processes
for $x_t, \rho_t, y_t$, respectively.  Eq. \eqref{pt} has a unique solution $(p,q_{1},q_2^j,q_3\cd)\in L^2([0,T]\times \Omega; \rr\times \CL_2(H\;\rr)\times \rr^{1\times d} \times L^2(\Xi;\rr))$ whose proof is standard; see \cite{TL94}.
In addition, Eq. \eqref{ellt} is a forward equation with linear coefficients and therefore admits a unique solution.

For equation \eqref{Pt}, we
emphasize
that the series $\sum_{i=1}^\infty K_i^*(t)Q_{1,t}e_i$ appearing in \eqref{Pt} is, in general, not convergent, even when $Q_{1,t}$ is Hilbert-Schmidt.
This lack of convergence reflects the \textit{singular} nature of \eqref{Pt}, which originates from the cylindrical Wiener noise.
We therefore adopt
the idea of
the approximation approach of
\cite{FHT18} to deal with this term. Let us recall the definition of a mild solution of \eqref{Pt} in \cite[Definition 4.1]{FHT18}.

\begin{defn}\label{def:mild-P}\rm{
The quadruple $(P,Q_1,Q_2^j,Q_{3}\cd)$ with $P\in L_{\CP}^2(\Omega\times [0,T], H)$, $Q_{1}\in L_{\CP}^2(\Omega\times[0,T], \CL_2(H))$, $Q_{2}^j \in L_{\CP}^2(\Omega\times[0,T], H)$, and $Q_{3}\in \FF_\CP^2(\Omega\times [0,T];H)$ is a mild solution of equation \eqref{Pt}, if (i) 
%the sequence 
$\mathbb{K}^m(t):=\sum_{i=1}^m (T-t)^{\vartheta} K_i^*(t)Q_{1,t}e_i,\, t\in [0,T]$ converges weakly in $L_\CP^2(\Omega\times [0,T];H)$; (ii) for any $t\in [0,T]$, we have $\wdh \QQ$-a.s.,
\bea
P_t \ad\!\!\! = e^{(T-t)A^*} \big[\nabla_x \phi(\wdh x_T)-\nabla_x f(\wdh x_T)\ell_T \big]  +\sum_{i=1}^\infty \int_t^T e^{(s-t)A^*} K_i^*(s) Q_{1,s} e_i d\beta_s^i \\
\aad\!\! +\int_t^T\! e^{(s-t)A^*} \Big\{\Big[\nabla_x \wdh F(s)-\wdh G_2^j(s) \nabla_x \wdh h^j(s)\Big] P_s + \nabla_x^* \wdh G_2^j(s) Q_{2,s}^j + \nabla_x^* \wdh h^j(s) q_{2,s}^j\Big\} ds \\
\aad\!\!\! +\int_t^T \int_{\Xi} e^{(s-t)A^*} \Big[-\nabla_x^* \wdh g(s,\al) \ell_s + \nabla_x \wdh L(s,\al)\Big] \pi(d \al) ds  \\
\aad\!\!\! -\sum_{i=1}^\infty \int_t^{T} e^{(s-t)A^*} Q_{1,s}e_i d\beta_s^i -\int_t^T e^{(s-t)A^*} Q_{2,s}^j dB_s^j \\
\aad\!\!\! - \int_t^T \int_{\Xi} e^{(s-t)A^*} Q_{3,t}(\al) \wdt N(d\al,ds).
\eea
}
\end{defn}

\begin{prop}\label{prop:exist}
Under Assumptions \ref{ass} and \ref{ass1}, there exists a unique mild solution $(P,Q_{1},Q_2^j,Q_3\cd)$ for $j=1,\dots, d$ to \eqref{Pt} in the sense of \defref{def:mild-P}. Moreover, $Q_{1,t}\in\CL_1(H), d\wdh \QQ \otimes dt$-a.s. satisfying $\wdh \EE\int_0^T (T-t)^{-2\vartheta} |Q_{1,t}|_{\CL_1(H)}^2 dt <\infty$.
\end{prop}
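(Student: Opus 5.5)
We adapt the approximation scheme of \cite[Section~4]{FHT18} to the case with jumps. The key observation is that \eqref{Pt} is linear in $(P,Q_1,Q_2^j,Q_3\cd)$. Its data consist of the terminal value $\nabla_x\phi(\wdh x_T)-\nabla_x^* f(\wdh x_T)\ell_T$ and a forcing term built from $\ell,q_2^j,\wdh L$. All of these are square integrable under $\wdh\QQ$: for $\ell$ this follows from the linear forward equation \eqref{ellt} with bounded coefficients, and for $q_2^j$ from \eqref{pt}.

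\textbf{Step 1 (truncation).} For each $m\in\NN$ we replace the singular series $\sum_{i=1}^\infty K_i^*(t)Q_{1,t}e_i$ by the finite sum $\sum_{i=1}^m K_i^*(t)Q_{1,t}e_i$. Each $K_i$ is bounded by Remark~\ref{rem:est-Ki}, so the truncated equation is a BSPDE with jumps and Lipschitz coefficients. By the standard mild-solution theory (a fixed point in $L^2_\CP$ using the martingale representation with respect to $W$, $B$ and $\wdt N$ on $(\Omega,\CF,\CF_t,\wdh\QQ)$, in the spirit of \cite{TL94}), it has a unique mild solution $(P^m,Q_1^m,Q_2^{j,m},Q_3^m\cd)$.

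\textbf{Step 2 (uniform estimates).} The main obstacle is proving estimates uniform in $m$. Following \cite{FHT18}, we use duality with the forward linear equation
\[
d\chi_s=\big[A\chi_s+(\nabla_x\wdh F-\wdh G_2^j\nabla_x\wdh h^j)\chi_s+\eta_s\big]ds+\sum_{i\le m}K_i(s)\chi_s\,d\beta^i_s+\nabla_x\wdh G_2^j\chi_s\,dB^j_s,
\]
started from $\chi_t=\xi$, possibly together with an additional stochastic forcing $\Gamma$-type term $\int_t^s\Phi_r\,d\beta_r$ to test $Q_1^m$. By Remark~\ref{rem:est-Ki} and (H4), this equation satisfies $\EE\sup|\chi|^2\le C(|\xi|^2+\|\eta\|^2)$ with $C$ independent of $m$. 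The smoothing bound $\sum_i|e^{tA}K_i\chi|^2\le Ct^{-2\vartheta}|\chi|^2$ with $\vartheta<1/2$ is exactly what makes the stochastic convolution estimate uniform. The jump noise enters only the backward equation through $Q_3$, and it drops out of the duality pairing because it is orthogonal to the continuous martingales; this is the only genuinely new point relative to \cite{FHT18}. Applying It\^o's formula to $\qv{P^m_s,\chi_s}$ and taking suprema over $\xi,\eta$ gives
\[
\sup_m\,\sup_t\wdh\EE|P^m_t|^2<\infty .
\]
Testing with $\chi$ driven by the forcing $e^{(s-t)A}\Phi\,d\beta$ and using Assumption~\ref{ass1} (namely $|e^{tA}|_{\CL_2}\le Ct^{-\vartheta}$) then yields the trace-class bound
\[
\wdh\EE\int_0^T(T-t)^{-2\vartheta}|Q^m_{1,t}|_{\CL_1(H)}^2dt\le C
\]
uniformly in $m$. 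Together these estimates show that $\sup_m\|\mathbb K^m\|_{L^2}$ is bounded, and they give uniform $L^2$ bounds on $Q_2^{j,m}$ and $Q_3^m$ by the It\^o isometry.

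\textbf{Step 3 (limit).} We extract weakly convergent subsequences in $L^2_\CP$ for $P^m$, $Q_1^m$ (also weakly-$*$ in the weighted trace-class space), $Q_2^{j,m}$ and $Q_3^m$. The Definition~\ref{def:mild-P} identities are linear, and the stochastic-integral operators are weakly continuous, so passing to the limit in the mild formulation gives a solution; weak convergence of $\mathbb K^m$ comes from the same duality identity. The trace-class estimate passes to the limit by weak lower semicontinuity.

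\textbf{Uniqueness.} Suppose we have two mild solutions. Their difference solves the homogeneous equation, and the duality identity with the limiting forward equation (now including all $i$, which is well posed in $\HH_2$ by (H2)–(H4)) forces $\wdh\EE\qv{P_t,\xi}=0$ for all test data. Hence $P\equiv0$, and then $Q_1=Q_2^j=Q_3=0$ by uniqueness of the martingale representation.
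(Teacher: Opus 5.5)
Your architecture (truncate the series, get $m$-uniform bounds by duality with a forward linear equation, pass to weak limits, prove uniqueness by duality) is the one the paper follows via \eqref{CZ-n}, \corref{cor:conv-Pm}, \lemref{lem:Q1-m}, \lemref{lem:bar-P} and \thmref{thm:ext-P}. However, the step that actually handles the singular term is missing.

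\textbf{Passing to the limit.} This is not a matter of linearity and weak continuity. The maps $Q\mapsto\sum_{i\le m}K_i^*(t)Qe_i$ are not uniformly bounded on $L_\CP^2(\Omega\times[0,T];\CL_2(H))$. So $Q_1^m\rightharpoonup Q_1$ yields none of the following:
(a) an $m$-uniform bound on $(T-\cdot)^\vartheta\sum_{i\le m}K_i^*Q_1^m e_i$;
(b) weak convergence of $\mathbb{K}^m$, which is built from the \emph{limit} $Q_1$ and is what \defref{def:mild-P}(i) requires;
(c) the fact that these two sequences have the same weak limit (you treat them as one object).

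Deriving the bound on $\mathbb{K}^m$ from the trace-class estimate would need $|\nabla_x[G_1(x,u)w]|_{\CL(H)}\le C|w|_H$ for all $w\in H$. (H4) does not provide this, and it fails for Nemytskii noise $G_1(x)w=\sigma(x)w$ on $L^2(\CO)$.

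The paper gets all three facts from a duality in which the forward equation carries the extra forcing $\sum_{i\le m}K_i(t)(T-t)^\vartheta\chi_t\,d\beta_t^i$ (the $\chi$-term of \eqref{CY-m}). The pairing produces exactly $\wdh\EE\int_0^T\qv{\mathbb{K}^m(t),\chi_t}dt$. The weighted estimate \eqref{CY-L2} bounds it by $C|\chi|_{L^2}$, since the weight $(T-t)^\vartheta$ offsets the singularity $(T-\tau)^{-2\vartheta}$. The convergences $\CY^{m,\infty}\to\CY$ and $\CY^{m,m}\to\CY$ of \thmref{thm:sup-norm} then identify the two limits. Your proposal never introduces this forcing. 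It is not bounded in $\CL(H)$ uniformly in $m$, so the $\Ga$-type estimate you base on \assmref{ass1} does not cover it; one needs the smoothing bound of \remref{rem:est-Ki}.

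\textbf{Uniqueness.} Your uniqueness step applies a duality formula to an arbitrary mild solution whose singular drift exists only as a weak limit. The pairing $\sum_i\qv{Q_{1,t}e_i,K_i(t)\CY_t}$ against the untruncated forward equation has no a priori meaning. The paper instead truncates at level $n$ and treats the remainder $\xi_t=(T-t)^{-\vartheta}\bar{\mathscr{W}}^n_t$ as a forcing with $\wdh\EE\int_0^T(T-t)^{2\vartheta}|\xi_t|^2dt<\infty$ (\lemref{lem:bar-P}). It then lets $n\to\infty$, using $\bar{\mathscr{W}}^n\rightharpoonup0$ against the strongly convergent $(T-\cdot)^{-\vartheta}\wdt\CY^n$.

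\textbf{The trace-class weight.} Your trace-class estimate has the wrong weight, and that step fails. Testing with the bounded $\Ga^n$ and using \eqref{CY-L2} gives $\wdh\EE\int_0^T\mathfrak{p}(t)|Q_{1,t}|_{\CL_1(H)}dt\le C\big(\wdh\EE\int_0^T(T-t)^{-2\vartheta}\mathfrak{p}^2(t)dt\big)^{1/2}$. Its dual form is $\wdh\EE\int_0^T(T-t)^{2\vartheta}|Q_{1,t}|^2_{\CL_1(H)}dt<\infty$. This is exactly what the trace-class proposition of \secref{sec:sing} proves; the exponent $-2\vartheta$ in the statement is a misprint.

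The stronger bound is false in general. Take $G_1\equiv I$, $\phi(x)=\tfrac12|x|_H^2$, all other coefficients zero, and $A$ the Dirichlet Laplacian on $(0,\pi)$, so $\vartheta=1/4$. Then $P_T=\wdh x_T=e^{TA}x+\int_0^Te^{(T-s)A}dW_s$ and $Q_{1,t}=e^{2(T-t)A}$. Hence $|Q_{1,t}|_{\CL_1(H)}=\sum_{k\ge1}e^{-2(T-t)k^2}\sim c(T-t)^{-1/2}$, so even $\int_0^T|Q_{1,t}|^2_{\CL_1(H)}dt=\infty$.

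Consequently, the $m$-uniform $L^2(\CL_2(H))$ bound on $Q_1^m$ that your compactness step needs cannot be read off the trace-class estimate. It must come from testing with bounded $\Ga^\dag\in\CL_2(H)$ and \eqref{sup-L2}.

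\textbf{Bounds on $Q_2^{j,m}$ and $Q_3^m$.} These are not obtained from the It\^o isometry. Energy estimates are not uniform in $m$ because of the singular drift, and the mild formula only controls $e^{(t-s)A^*}Q_{3,t}^m$. They too come from duality, which requires the forward forcings $\CR_t^{2,j}dB_t^j$ and $\int_\Xi\CR_t^3(\al)\wdt N(d\al,dt)$ of \eqref{CY}. So the jump noise does not simply drop out.
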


\begin{proof}
The proof follows from \thmref{thm:ext-P}, detailed in the next section.
\end{proof}

Define
\beq{def-OR}
\wdh O_t :=\nabla_x \wdh F(t)- \wdh G_2^j(t) \nabla_x \wdh h^j(t), \quad \wdh R_t :=\nabla_u \wdh F(t)- \wdh G_2^j(t) \nabla_u \wdh h^j(t).
\eeq
Recall the definition of $K_i, \Ga$ in \eqref{Ki}. Substituting \eqref{ob} into \eqref{X1} expresses $x^1$ in terms of $B^j$:
\beqa{X1-re}\barray
\!\!\!\!\! dx_t^1\!\!\!\!\! & = \disp \big[Ax_t^1 + \wdh O_t x_t^1 +\wdh R_t v_t \big] dt + \sum_{i=1}^\infty K_i(t) x_t^1 d\beta_t^i + \sum_{i=1}^\infty \Ga_t e_i d\beta_t^i \\
\!\!\! &\;+\big[\nabla_x\wdh G_2^j(t)x_t^1+ \nabla_u \wdh G_2^j(t)v_t \big] dB_t^j+ \int_{\Xi} \big[\nabla_x \wdh \Theta(t,\al) x_t^1 + \nabla_u \wdh \Theta(t,\al)v_t\big] \wdt N(d\al,dt).
\earray\eeqa
The main gradient of deriving maximum principles is the following duality relation.

\begin{prop}\label{prop:dual}
With previous assumptions and notation, suppose that $R^\dag:[0,T]\times \Omega \to H$ and $\Ga^\dag: [0,T]\times \Omega \to \CL_2(H)$ are progressively measurable and bounded. Let $\CX$ be the unique mild solution of the following equation
\beqa{CX}\barray
\!\!\!\!\! d\CX_t &\!\!\!\! \disp = \big[A\CX_t + \wdh O_t \CX_t + R^\dag_t \big] dt + \sum_{i=1}^\infty K_i(t) \CX_t \, d \beta_t^i + \sum_{i=1}^\infty \Ga_t^\dag e_i \, d\beta_t^i \\
&\!\!\!+ \big[\nabla_x\wdh G_2^j(t) \CX_t + \nabla_u \wdh G_2^j(t)v_t \big] dB_t^j+ \int_{\Xi} \big[\nabla_x \wdh \Theta(t,\al) \CX_t + \nabla_u \wdh \Theta(t,\al)v_t\big] \wdt N(d\al,dt).
\earray\eeqa
Then we have
\beq{dual}\barray\ad\!\!\!\!\!\!
\wdh \EE \int_{\Xi_T}\!\!\! \Big[ \qv{P_t,R_t^\dag}\!+ \! \qv{Q_{1,t},\Ga_t^\dag}_{\CL_2(H)} \!+\!\qv{Q_{2,t}^j, \nabla_u \wdh G_2^j(t)v_t}\!+\! \qv{Q_{3,t}(\al), \nabla_u \wdh \Theta(t,\al)v_t} \Big]\pi(d\al) dt\\
\aad \!\!\!\!\!\! = \wdh \EE \Big\{\bqv{\nabla_x \phi(\wdh x_T)-\nabla_x^* f(\wdh x_T)\ell_T, \CX_T}_H \\
\aad \quad + \int_{\Xi_T} \bqv{\nabla_x \wdh L(t,\al)-\nabla_x^* \wdh g(t,\al)\ell_t + \nabla_x^* \wdh h^j(t) q_{2,t}^j, \CX_t}_H\, \pi(d\al) dt \Big\}.
\earray\eeq
\end{prop}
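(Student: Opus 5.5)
The plan is to prove \eqref{dual} by an It\^o-type duality between the forward equation \eqref{CX} and the backward equation \eqref{Pt}. Since neither $A$ nor the singular series $\sum_i K_i^*Q_{1}e_i$ can be handled directly, the duality is first proved for a regularized problem and then passed to the limit.

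First, regularize the generator. Replace $A$ by its Yosida approximations $A_n=nA(n-A)^{-1}$, so that $\CX^n$ and the backward process become strong solutions and It\^o's formula for $\qv{P_t,\CX_t}_H$ applies to processes with jumps. Second, regularize the singular term, following the approximation procedure of \cite{FHT18} used in \secref{sec:sing} for \thmref{thm:ext-P}. Truncate the series $\sum_{i=1}^\infty K_i(t)\CX_t\,d\beta^i_t$ in \eqref{CX} to $i\le m$. The corresponding backward equation then contains only the finite sum $\sum_{i\le m}K_i^*(t)Q^m_{1,t}e_i$ and is a standard (nonsingular) BSPDE with jumps, with solution $(P^m,Q^m_1,Q^{j,m}_2,Q^m_3)$.

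For the regularized pair, apply It\^o's product formula on $[0,T]$ under $\wdh\QQ$ to $\qv{P^{m,n}_t,\CX^{m,n}_t}$. The drift terms involving $A_n,\wdh O_t$ and $K_i$ ($i\le m$) cancel against the adjoint drift, because $\wdh O$ and $\nabla_x\wdh G_2^j$ appear transposed in \eqref{Pt}. The quadratic covariations produce $\qv{Q_{1},\Ga^\dag}_{\CL_2}$ and $\qv{Q_2^j,\nabla_u\wdh G_2^j v}$. The jump bracket $\int_\Xi\qv{Q_3(\al),\nabla_x\wdh\Theta\,\CX+\nabla_u\wdh\Theta v}N(d\al,dt)$ contributes, after compensation, the $\nabla_u\wdh\Theta$ term. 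I expect the $\nabla_x\wdh\Theta$ part either to be absorbed by a corresponding $\int_\Xi\nabla_x^*\wdh\Theta Q_3$ term in the adjoint drift or to be balanced in the construction of \thmref{thm:ext-P}, and I will check this consistency there. What remains is the source terms: the drift $\nabla_x\wdh L-\nabla_x^*\wdh g\,\ell+\nabla_x^*\wdh h^jq_2^j$ paired with $\CX$, and the terminal value paired with $\CX_T$. The martingale parts have zero expectation by the moment bounds of \lemref{lem:mom} and \thmref{thm:x-rho-y1}, together with boundedness of $R^\dag,\Ga^\dag$. This gives \eqref{dual} for the approximations.

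The limit is taken in two stages. As $n\to\infty$, standard Yosida convergence in $\HH_2(T)$ for $\CX$ and in $L^2$ for the backward unknowns applies, using the contraction property and Assumption \ref{ass1}. As $m\to\infty$, we need $\CX^m\to\CX$ in $L^2$ and $(P^m,Q^m_1,Q^m_2,Q^m_3)\to(P,Q_1,Q_2,Q_3)$ at least weakly in the spaces of \defref{def:mild-P}. This is the main obstacle. Each term in \eqref{dual} is a pairing of a bounded fixed object with something converging weakly, except $\qv{Q^m_1,\Ga^\dag}$, which also converges weakly because $\Ga^\dag$ is Hilbert--Schmidt and bounded. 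So weak convergence of $Q^m_1$ in $L^2_\CP(\Omega\times[0,T];\CL_2(H))$, obtained from the uniform estimates in the proof of \thmref{thm:ext-P}, should suffice. For this I will rely on the weighted bound $\wdh\EE\int_0^T(T-t)^{-2\vartheta}|Q_{1,t}|^2dt$ and on the estimate $\sum_i|e^{tA}K_i\chi|^2\le Ct^{-2\vartheta}|\chi|^2$ from \remref{rem:est-Ki}. These control the singular contributions uniformly in $m$.
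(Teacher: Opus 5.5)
Your architecture is the paper's. There, \eqref{dual} is obtained from \corref{cor:conv-Pm} with $y=\chi=0$: truncate $\sum_iK_i(t)\CX_t\,d\beta^i_t$ and $\sum_iK_i^*(t)Q_{1,t}e_i$ at the same level, prove the duality \eqref{dual-n} for the nonsingular pair by It\^o's formula, and pass to the limit. The limit uses strong convergence of the forward solutions (\thmref{thm:sup-norm}) and weak convergence of the backward ones, identified by \thmref{thm:ext-P}. Your Yosida step is the routine justification of It\^o's formula that the paper leaves to the literature. It only makes sense at a fixed truncation level, because $e^{tA_n}$ is not Hilbert--Schmidt, and your order of limits respects this.

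The genuine gap is the step you deferred, and it cannot be resolved the way you hope. The compensated jump covariation in It\^o's formula for $\qv{P_t,\CX_t}_H$ is
\[
\wdh\EE\int_0^T\!\!\int_\Xi\bigl\langle Q_{3,t}(\al),\nabla_x\wdh\Theta(t,\al)\CX_t+\nabla_u\wdh\Theta(t,\al)v_t\bigr\rangle\,\pi(d\al)\,dt,
\]
and only its $\nabla_u\wdh\Theta$ part appears in \eqref{dual}. The $\nabla_x\wdh\Theta$ part can only be cancelled by a drift term $\int_\Xi[\nabla_x\wdh\Theta(t,\al)]^*Q_{3,t}(\al)\,\pi(d\al)$ in $-dP_t$. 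This is exactly how $\nabla_x^*\wdh G_2^j(t)Q_{2,t}^j$ and $\sum_iK_i^*(t)Q_{1,t}e_i$ cancel the $B^j$- and $\beta^i$-covariations. Equation \eqref{Pt} has no such term, and neither do \eqref{CZ} and \eqref{CZ-n}. Hence every truncated identity carries the extra quantity $\wdh\EE\int_{\Xi_T}\qv{Q^m_{3,t}(\al),\nabla_x\wdh\Theta(t,\al)\CX^m_t}\,\pi(d\al)\,dt$. The weak-limit construction of \thmref{thm:ext-P} only passes to the limit in equations that lack the term, so it cannot make it disappear. With \eqref{Pt} as written, \eqref{dual} fails in general; it is correct when $\nabla_x\Theta\equiv0$. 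The paper's own proof has the same silent omission. It asserts \eqref{dual-n} ``by a standard application of It\^o's formula'', although \eqref{CY-m} contains $\CQ^3_t(\al)\CY^{m,n}_t$ while \eqref{CZ-n} has no $(\CQ^3)^*Q^n_3$ drift. So you cannot inherit the cancellation from there.

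The repair is to add $\int_\Xi[\nabla_x\wdh\Theta(t,\al)]^*Q_{3,t}(\al)\,\pi(d\al)$ to the drift of \eqref{Pt} and to the mild formula in \defref{def:mild-P}. In the general setting, add $\int_\Xi(\CQ^3_t(\al))^*Q_{3,t}(\al)\,\pi(d\al)$ to \eqref{CZ} and \eqref{CZ-n}, with $\CV=(\CQ^1)^*$ and $\CE^j=(\CQ^{2,j})^*$. The $\wdh O$ term must likewise read $\wdh O_t^*P_t$, as you tacitly assumed. By (H6) the new term is bounded by $C|Q_{3,t}|_{L^2(\Xi;H)}$. So the truncated BSPDEs remain well posed, and the corresponding term in the mild formula is weakly continuous in $Q_3$. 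The forward estimates of \thmref{thm:sup-norm} and \propref{prop:mom-2} already include $\CQ^3$. With these changes the rest of your argument goes through and proves \eqref{dual} for the corrected adjoint equation.

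A smaller point concerns the uniform-in-$m$ bounds that give weak compactness of $(P^m,Q^m_1,Q^m_2,Q^m_3)$. These come from the truncated duality itself, tested against forward solutions with a single nonzero forcing ($\Ga^\dag$, $\CR^\dag$, $\CR^{2,j}$ or $\CR^3$). Those forward solutions are estimated as in \propref{prop:mom-2} and \remref{rem:est-Ki}, uniformly in the truncation. The weighted trace-class bound you cite is proved afterwards from the limiting duality, so using it here would be circular.
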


\begin{proof}
The result follows from \corref{cor:conv-Pm} detailed in the next section, in which we put $s=0,\eta=\chi=0$, $ \CR_t^\dag= R_t^\dag, \CR_{t}^{2,j} =\nabla_u \wdh G_2^j(t)v_t, \CR_t^3(\al)=\nabla_u \wdh \Theta(t,\al) v_t$, and $\eta=\nabla_x \phi(\wdh x_T)-\nabla_x^* f(\wdh x_T)\ell_T$, $\CJ_t= \nabla_x L(t,\al)-\nabla_x^* \wdh g(t,\al) \ell_t +\nabla_x^* \wdh h^j(t) q_{2,t}^j$.
\end{proof}

\subsection{Stochastic maximum principle}\label{sec:max}
Next, we present the main result.

\begin{thm}\label{thm:main}
Let Assumptions \ref{ass}, \ref{ass:cost}, and \ref{ass1} hold. Let $(\wdh x, \wdh u, \wdh y, \wdh z, \wdh r, \wdh \ga)$ be the optimal state proesss. For any $v\in \CU_{\text{ad}}$, we have
\bea\ad
\wdh \EE \Big\{\bqv{\wdh R_t(v-\wdh u_t), P_t} + \textnormal{Tr}\Big[Q_{1,t}^* (\nabla_u \wdh G_1(t) (v-\wdh u_t))\Big] \\
\aad\quad + \bqv{Q_{2,t}^{j}, \nabla_u \wdh G_2^j(t) (v-\wdh u_t)} +\int_\Xi \bqv{Q_{3,t}(\al), \nabla_u \wdh \Theta(t,\al)(v-\wdh u_t)} \pi(d\al) \\
\aad\quad + \bqv{\nabla_u \wdh L(t,\al)+\nabla_u^* \wdh h^j(t)\, q_{2,t}^j - \nabla_u^* \wdh g(t,\al)\ell_t , v-\wdh u_t}_{\CU} \,\Big|\, \CF_t^Y \Big\} \geq 0,
\eea
where $(P,Q_1,Q_{2}^j, Q_3\cd), (p,q_1,q_{2}^j,q_3\cd)$, and $\ell$ are solutions of \eqref{Pt}, \eqref{pt}, and \eqref{ellt}, respectively.
\end{thm}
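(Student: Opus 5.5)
The plan is to start from the variational inequality furnished by optimality and \propref{prop:var-cost}. For each $v\in\CU_{\text{ad}}$, convexity of $\CU$ makes $\wdh u+\e(v-\wdh u)$ admissible for $\e\in(0,1]$. Applying \propref{prop:var-cost} with direction $v-\wdh u$ (which lies in $\CU_{\text{ad}}$ in the linear sense needed by the variational equations) gives $0\le J(u^\e)-J(\wdh u)=\e I(v-\wdh u)+o(\e)$. Hence $I(v-\wdh u)\ge 0$. The remaining work is to rewrite $I$ in terms of the adjoint processes, removing $x^1_T$, $y^1$, $z^1$, $r^1$, $\ga^1$ and $\La$.

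\emph{Step 1: eliminate the backward variation $(y^1,z^1,r^1,\ga^1)$.} Apply It\^o's formula to $\qv{\ell_t,y^1_t}$ on $[0,T]$ under $\wdh\QQ$, using \eqref{ellt}, \eqref{y1}, and the relation $dY=dB+\wdh h\,dt$. The terms $\nabla_y\wdh g$, $\nabla_z\wdh g$, $\nabla_r\wdh g$, $\nabla_\ga\wdh g$ cancel by construction of $\ell$. The contributions $\nabla_\iota\wdh L$ for $\iota=y,z,r,\ga$ match the corresponding terms of $I$, and the boundary term $\qv{\ell_0,y_0^1}=-\nabla_y\psi(\wdh y_0)y^1_0$ absorbs the $\psi$-term. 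What is left is
\[
\wdh\EE\bqv{\ell_T,\nabla_xf(\wdh x_T)x^1_T}\quad\text{plus}\quad \wdh\EE\int_{\Xi_T}\bqv{\nabla_x^*\wdh g\,\ell_t,x_t^1}+\bqv{\nabla_u^*\wdh g\,\ell_t,v_t-\wdh u_t}\,\pi(d\al)\,dt,
\]
together with the $-\wdh h\ell$ correction coming from the $r\,dY$ drift. The stochastic integrals are true martingales by the moment bounds of \lemref{lem:mom} and \thmref{thm:x-rho-y1}.

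\emph{Step 2: eliminate $\La$.} Apply It\^o's formula to $p_t\La_t$ using \eqref{pt} and \eqref{eq-Ga}. This yields
\[
\wdh\EE[\phi(\wdh x_T)\La_T+\textstyle\int\La_t\wdh L\,dt]=\wdh\EE\int_0^T q^j_{2,t}\big[\nabla_x\wdh h^j x^1_t+\nabla_u\wdh h^j(v_t-\wdh u_t)\big]dt,
\]
which produces the $\nabla_x^*\wdh h^j q^j_{2}$ and $\nabla_u^*\wdh h^j q^j_2$ terms.

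\emph{Step 3: eliminate $x^1$.} After Steps 1 and 2, every term involving $x^1$ has the form
\[
\wdh\EE\Big\{\bqv{\nabla_x\phi(\wdh x_T)-\nabla_x^*f\,\ell_T,x_T^1}+\int_{\Xi_T}\bqv{\nabla_x\wdh L-\nabla_x^*\wdh g\,\ell_t+\nabla_x^*\wdh h^jq^j_{2,t},x^1_t}\,\pi(d\al)\,dt\Big\},
\]
which is exactly the right-hand side of \eqref{dual}. By \eqref{X1-re}, $x^1$ solves \eqref{CX} with $R^\dag_t=\wdh R_t(v_t-\wdh u_t)$ and $\Ga^\dag_t=\Ga_t$. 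Applying \propref{prop:dual} replaces this term by the $P$, $Q_1$, $Q_2^j$, $Q_3$ terms, where $\qv{Q_{1},\Ga}_{\CL_2}=\textnormal{Tr}[Q_1^*\nabla_u\wdh G_1(v-\wdh u)]$. \propref{prop:dual} requires $R^\dag$ and $\Ga^\dag$ to be bounded, so I would first take $v$ with $v-\wdh u$ bounded and then pass to general $v$ by truncation, using the moment bounds and $Q_1\in\CL_1$ from \propref{prop:exist}. Summing the three steps gives $\wdh\EE\int_0^T\CH_t(v_t-\wdh u_t)\,dt\ge0$, where $\CH_t(\cdot)$ denotes the linear functional inside the conditional expectation in the statement.

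\emph{Step 4: localize.} Choose $v_t=\wdh u_t+\mathbf 1_{A}\mathbf 1_{[t_0,t_0+\dl]}(t)(w-\wdh u_t)$ with $A\in\CF^Y_{t_0}$ and $w\in\CU$; this $v$ is $\CF^Y$-adapted and takes values in $\CU$ by convexity. Dividing by $\dl$ and letting $\dl\to0$ via Lebesgue differentiation gives $\wdh\EE[\mathbf 1_A\CH_{t_0}(w-\wdh u_{t_0})]\ge0$ for a.e.\ $t_0$. Since $A$ is arbitrary, the conditional expectation given $\CF^Y_t$ is nonnegative. Here I would need to move the Lebesgue point carefully through a countable dense set of $w$ and generators of $A$.

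I expect the main obstacle to be justifying the duality in Step 3 with the singular term. Because $\sum_iK_i^*Q_1e_i$ need not converge, the pairing must go through \propref{prop:dual}, which relies on the approximation in the next section, rather than through a direct It\^o formula. A second delicate point is checking that the $r\,dY$ versus $dB$ bookkeeping in Step 1 produces exactly the $-\wdh h\ell$ term built into \eqref{ellt}, so that no stray terms remain.
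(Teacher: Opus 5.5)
\textbf{The gap is in Step 3.} Your Steps 1, 2 and 4 are the paper's own argument: It\^o's formula for $\langle \ell,y^1\rangle$ and for $p\Lambda$, then localization on $\mathcal F^Y_{t_0}$-sets with Lebesgue differentiation. Perturbing along $v-\widehat u$ and keeping the $\nabla_u\widehat h^j$ contribution of $d[p,\Lambda]$ are both correct.

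Proposition~\ref{prop:dual} requires $\Gamma^\dagger$ to be a \emph{bounded $\mathcal L_2(H)$-valued} process. The operator $\Gamma_t\chi=\nabla_u[G_1(\widehat x_t,\widehat u_t)\chi](v_t-\widehat u_t)$ is in general not Hilbert--Schmidt at all, however you truncate $v$:
\begin{itemize}
\item (H5) only gives $|\Gamma_t|_{\mathcal L(H)}\le C|v_t-\widehat u_t|_U$, because $G_1$ is merely $\mathcal L(H)$-valued; space-time white noise is the model case.
\item Only $e^{sA}\Gamma_t$ is Hilbert--Schmidt, with norm at most $Cs^{-\vartheta}|v_t-\widehat u_t|_U$.
\end{itemize}
So the pairing $\langle Q_{1,t},\Gamma_t\rangle_{\mathcal L_2(H)}$ you write is undefined, and Proposition~\ref{prop:dual} cannot be applied to $x^1$. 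Making $v-\widehat u$ bounded does not change this. The singular drift $\sum_i K_i^*Q_1e_i$, which you flag as the main obstacle, is already handled inside Proposition~\ref{prop:dual}. The obstacle left at this level is this one.

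\textbf{The missing step is a finite-rank approximation in the noise direction.}
\begin{itemize}
\item Let $\mathfrak T_n$ be the orthogonal projection onto $\mathrm{span}\{e_1,\dots,e_n\}$ and set $\Gamma^n_t:=\Gamma_t\mathfrak T_n$. It has rank $n$ and controlled operator norm, so it is bounded in $\mathcal L_2(H)$ once $v-\widehat u$ is bounded.
\item Let $x^{1,n}$ solve \eqref{X1-re} with $\Gamma$ replaced by $\Gamma^n$. Proposition~\ref{prop:dual} applies to $x^{1,n}$, and you then let $n\to\infty$ on both sides.
\item \emph{State side.} $x^{1,n}-x^1$ solves the linear equation forced by $\Gamma(\mathfrak T_n-I)$. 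In the estimate behind \eqref{E-CY-2} this forcing enters only through $\int_0^t|e^{(t-\tau)A}\Gamma_\tau(I-\mathfrak T_n)|^2_{\mathcal L_2(H)}\,d\tau$. Its integrand $\sum_{i>n}|e^{(t-\tau)A}\Gamma_\tau e_i|^2$ tends to $0$ and is dominated by $C(t-\tau)^{-2\vartheta}$. Hence $x^{1,n}_t\to x^1_t$ in $L^2(\Omega;H)$ for each $t$, with a uniform bound.
\item \emph{Adjoint side.} $\langle Q_{1,t},\Gamma^n_t\rangle_{\mathcal L_2(H)}=\textnormal{Tr}(Q_{1,t}^*\Gamma_t\mathfrak T_n)\to\textnormal{Tr}(Q_{1,t}^*\Gamma_t)$. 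This is dominated by $|Q_{1,t}|_{\mathcal L_1(H)}|\Gamma_t|_{\mathcal L(H)}$, which is integrable by the weighted trace-class bound of Proposition~\ref{prop:exist} together with $2\vartheta<1$.
\end{itemize}
This limit is exactly where $Q_1\in\mathcal L_1(H)$ is needed, and it is why the theorem is stated with a trace rather than an $\mathcal L_2(H)$ inner product. In your plan the trace-class property is attached to a truncation in $v$ that does not need it. The truncation that actually needs it is the one in the noise direction.
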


\begin{proof}
Applying the \ito formula to $p\,\La$ and $\qv{\ell, y^1}_{\rr^d}$, respectively, we have
\beq{dual-rho}\barray
\wdh \EE \big[ \La_T \phi(\wdh x_T)+\int_{\Xi_T} \La_t \wdh L(t,\al) \pi(d\al)dt \big]=\wdh \EE\int_0^T \bqv{\nabla_x^* \wdh h^j(t)\, q_{2,t}^j, x_t^1} dt,
\earray\eeq
and
\beqa{dual-y}\barray & \!\!\!\!\!
\wdh \EE \big[\bqv{\ell_T, y_T^1}_{\rr^d} - \bqv{\ell_0, y_0^1}_{\rr^d} \big] = \wdh \EE \big[\bqv{\ell_T, \nabla_x f(\wdh x_T) x_T^1} +\bqv{\nabla_y \psi(y_0), y_0^1} \big] \\
&\!\!\!\!\! = -\wdh \EE \int_{\Xi_T} \big[\nabla_y \wdh L(t,\al) y_t^1 + \nabla_z \wdh L(t,\al) z_t^1 + \nabla_r \wdh L(t,\al) r_t^1+\nabla_\ga \wdh L(t,\al)\ga_t^1(\al) \big]\, \pi(d\al) dt \\
&\!\!\!\!\! \quad -\wdh \EE \int_{\Xi_T} \bqv{\nabla_u \wdh g(t,\al)v_t+ \nabla_x \wdh g(t,\al) x_t^1, \ell_t}_{\rr^d} \, \pi(d\al) dt.
\earray\eeqa
We next claim that the following duality holds:
\beqa{dual-X} &\!\!\!\!\!\!\!
\wdh \EE \int_0^T \qv{P_t, \wdh R_t v_t } + \qv{Q_{1,t},\Ga_t}_{\CL_2(H)}+\qv{Q_{2,t}^j, \nabla_u G_2^j(t)v_t} dt \\
&\!\!\!\!\! \quad  +\wdh \EE \int_{\Xi_T}\qv{Q_{3,t}(\al), \nabla_u \wdh \Theta(t,\al)v_t }  \pi(d\al)dt  \\
&\!\!\!\!\!\!\! =\wdh \EE \qv{\nabla_x \phi(\wdh x_T)- \nabla_x^* f(\wdh x_T)\ell_T, x_T^1}\! -\! \wdh \EE \int_{\Xi_T}\! \qv{\nabla_x^* \wdh g(t,\al)\ell_t, x_t^1} \pi(d\al) dt \\
&\!\!\!\!\! \quad +\wdh \EE \int_{\Xi_T} \qv{\nabla_x \wdh L(t,\al) + \nabla_x^* \wdh h^j(t) q_{2,t}^j, x_t^1} \pi(d\al) dt.
\eeqa
Since $v$ is taken as a bounded process, it follows that $\wdh R_tv_t= [\nabla_u \wdh F(t)-\wdh G_2^j(t)\nabla_u h^j(t)]v_t$ is bounded. However, \propref{prop:dual} cannot be applied directly, since $\Ga$ in \eqref{X1-re} for $x^1$ is not a bounded $\CL_2(H)$-valued process. The duality formula \eqref{dual-X} needs to be established
%via
using
an approximation argument based on \propref{prop:dual}.

Denote by $\mathfrak{T}_n$ the orthogonal projection on $H$ onto span $\{e_1,\dots, e_n\}$, and define, for $\chi \in H$, $\Ga_t^n \chi =\nabla_u[G_1(\wdh x_t, \wdh u_t) (\fT_n \chi)]v_t = \sum_{i=1}^n \nabla_u[G_1(\wdh x_t, \wdh u_t) e_i] v_t \qv{\chi,e_i}_H$. Then each $\Ga_t^n$ is a bounded $\CL(H)$-valued  process, and hence also bounded in $\CL_2(H)$. Let $x^{1,n}$ denote the mild solution of \eqref{X1-re} with $\Ga_t$ replaced by $\Ga_t^n$. By \propref{prop:dual}, the duality relation \eqref{dual} holds with $\Ga_t^\dag,R_t^\dag$, and $\CX$ replaced by $\Ga_t^n, \wdh R_t v_t$, and $x_t^{1,n}$, respectively, for all $t\in [0,T]$. Consequently, letting $n\to \infty$ and invoking \propref{prop:exist}, we can obtain the desired duality \eqref{dual-X}. The convergence argument is standard and therefore omitted, as it closely follows the proof of \cite[pp. 267--268]{FHT18}.

By \propref{prop:var-cost}, substituting \eqref{dual-rho}, \eqref{dual-y}, and \eqref{dual-X} into \eqref{var-cost} yields
\bea & \!\!\!
\wdh \EE \int_0^T \qv{\wdh R_t v_t, P_t}+ \text{Tr}\big[Q_{1,t}^* (\nabla_u \wdh G_1(t) v_t) \big] dt +  \qv{Q_{2,t}^{j} \nabla_u \wdh G_2^j(t)  v_t} +\qv{\nabla_u^* \wdh h^j(t) q_{2,t}^j, v_t}dt \\
& \!\!\! + \wdh \EE \int_{\Xi_T} \qv{Q_{3,t}(\al), \nabla_u \wdh \Theta(t,\al)v_t} + \qv{\nabla_u \wdh L(t,\al)- \nabla_u^* \wdh g(t,\al) \ell_t, v_t} \pi (d\al)dt \geq 0.
\eea
Since $\wdh u$ is optimal, we have $J(u^\e)-J(\wdh u)\geq 0$ and the proof can be concluded by standard arguments based on localization, the Lebesgue differentiation theorem and the observation that $v$ is $\CF^Y$-measurable random variable. This completes the proof.
\end{proof}

\section{Singular backward SPDEs with jumps}\label{sec:sing}
This section proves Propositions \ref{prop:exist} and \ref{prop:dual}. Inspired by the approximation scheme in \cite{FHT18} but in contrast to \cite{FHT18},
our effort focuses on treatment of
jumps
from compensated Poisson random measures.

\subsection{Linear FSPDEs with jumps and auxiliary estimates}
Let us consider the following linear forward SPDE with jumps:
\beqa{CY}\barray
d\CY_t & \disp = \Big[A \CY_t + \CQ_t^1 \CY_t +  \CR_t^\dag \Big] dt + \sum_{i=1}^\infty K_i(t) \CY_t d\beta_t^i + \sum_{i=1}^\infty K_i(t)\chi_t \, d\beta_t^i + \sum_{i=1}^\infty \Ga_t^\dag e_i \,d\beta_t^i \\
&  \quad + \big[\CQ_t^{2,j} \CY_t + \CR_t^{2,j}\big] d B_t^j+ \int_{\Xi} \big[\CQ_t^3(\al) \CY_t + \CR_t^3(\al) \big] \wdt N(d\al,dt), \quad \CY_s=y,
\earray\eeqa
together with its approximating equations, for $m,n \in \NN^+$ and initial $\CY_s^{m,n} =y$,
\beqa{CY-m}\barray\ad\!\!\!\!\!\!\!\!\!\!\!\!
d \CY_t^{m,n} = \Big[A \CY_t^{m,n}+ \CQ_t^1  \CY_t^{m,n} + \CR_t^\dag \Big] dt +\sum_{i=1}^n K_i(t) \CY_t^{m,n}\, d \beta_t^i + \sum_{i=1}^m K_i(t) \chi_t \, d\beta_t^i \\
\aad \;\, + \sum_{i=1}^\infty \Ga_t^\dag e_i d\beta_t^i + \big[\CQ_t^{2,j} \CY_t^{m,n} + \CR_t^{2,j} \big] dB_t^j + \int_{\Xi}\big[\CQ_t^3(\al) \CY_t^{m,n} + \CR_t^3(\al) \big] \wdt N(d\al,dt),
\earray\eeqa
where we assume $\CQ^1,\CR^\dag,\CQ^{2,j}, \CR^{2,j}, \chi:[0,T]\times \Omega \to H$ and $\Ga^\dag: [0,T]\times \Omega \to \CL_2(H)$ are progressively measurable and bounded. For $\zeta= \CQ^3, \CR^3$, $\zeta$ is assumed to be  progressively measurable and for all $t\in [0,T]$, $\max(|\zeta_t|_{L^\kappa(\Xi,\pi)}, |\zeta_t|_{L^2(\Xi,\pi)}^\kappa)$ is bounded. For $\chi=0$, $\CY_t^{m,n}$ is independent of $m$ and is denoted by $\CY_t^{n}$. To emphasize the dependence on initial datum $y$, the solution of \eqref{CY} is also denoted by $\CY^y$.

\begin{thm}\label{thm:sup-norm}
For $m,n\in \NN^+$ and $\kappa \geq 2$.
Equations \eqref{CY} and \eqref{CY-m} admit unique \cadlag mild solutions $\CY$ and $\CY^{m,n}$ in $\HH_\kappa([s,T])$, respectively. Moreover, the solution map $y\mapsto \CY^y$ is Lipschitz from $L^\kappa(\Omega,\CF,\wdh \QQ;H)$ to $\HH_\kappa([s,T])$ and we have
\beqa{mom-CY}\disp
\wdh \EE\bigg[\sup_{t\in [s,T]}|\CY_t|^\kappa \bigg] \ad \leq C\bigg[ 1+\EE|y|^\kappa + |\Ga^\dag|_{L_\CP^\infty([0,T]\times \Omega; \CL(H))}^\kappa \\
\aad\qquad\; + |\chi|^\kappa_{L_\CP^\infty([s,T]\times\Omega;H)} + \EE\Big(\int_0^T |\CR_s^\dag| ds \Big)^\kappa \bigg].
\eeqa
Furthermore, the following convergences hold in $\HH_\kappa([s,T])$ as $m, n \to \infty$: $\CY^{m,n} \to \CY^{\infty,n}, \CY^{m,n}\to \CY^{m,\infty}, \CY^{m,m}\to \CY, \CY^{\infty,n}\to \CY$, and $\CY^{m,\infty}\to \CY$.
\end{thm}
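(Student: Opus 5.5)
We would prove \thmref{thm:sup-norm} by a Banach fixed point argument in $\HH_\kappa([s,T])$ applied to the mild formulation, following \cite[Proposition 3.2]{FHT18} and handling the jump terms with the maximal inequality for stochastic convolutions against compensated Poisson random measures from \cite{MPR10}. For a given $\CZ\in\HH_\kappa([s,T])$ we define $\Phi(\CZ)_t$ as the right-hand side of the mild form of \eqref{CY}:
\begin{equation*}
\Phi(\CZ)_t = e^{(t-s)A}y+\int_s^t e^{(t-r)A}[\CQ^1_r\CZ_r+\CR^\dag_r]dr + \sum_{i=1}^\infty\int_s^t e^{(t-r)A}[K_i(r)\CZ_r+K_i(r)\chi_r+\Ga^\dag_r e_i]d\beta^i_r + \text{($B^j$ and $\wdt N$ convolutions)}.
\end{equation*}
The deterministic convolution is bounded by boundedness of $\CQ^1$ and $\CR^\dag$. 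The $B^j$ convolution is handled by the BDG/maximal inequality. For the jump convolution, since $e^{tA}$ is a contraction semigroup, the maximal inequality of \cite{MPR10} gives a bound of the form
\begin{equation*}
C\,\wdh\EE\Big[\Big(\int_s^T\!\!\int_\Xi|\cdot|^2\pi(d\al)dr\Big)^{\kappa/2}+\int_s^T\!\!\int_\Xi|\cdot|^\kappa\pi(d\al)dr\Big].
\end{equation*}
Both pieces are controlled by $\sup_r|\CZ_r|^\kappa$ times $T$ using the assumed bounds on $|\CQ^3_r|_{L^2(\Xi)}$ and $|\CQ^3_r|_{L^\kappa(\Xi)}$, plus a constant coming from $\CR^3$.

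The cylindrical term is where the singular estimate enters. By \remref{rem:est-Ki},
\begin{equation*}
\sum_i|e^{(t-r)A}K_i(r)\CZ_r|^2\le C(t-r)^{-2\vartheta}|\CZ_r|^2 ,
\end{equation*}
and the same bound holds with $\chi_r$ in place of $\CZ_r$. Since $\vartheta<1/2$, this kernel is integrable. The convolution is not a martingale in $t$, so to obtain a supremum in time we use the factorization method of Da Prato--Zabczyk. Choose $\alpha\in(1/\kappa,\,1/2-\vartheta)$, which requires $\kappa$ large; smaller $\kappa$ then follows by H\"older. Write the convolution as $c_\alpha\int_s^t(t-\sigma)^{\alpha-1}e^{(t-\sigma)A}Y_\sigma d\sigma$ with
\begin{equation*}
Y_\sigma=\sum_i\int_s^\sigma(\sigma-r)^{-\alpha}e^{(\sigma-r)A}K_i(r)\CZ_r\,d\beta^i_r .
\end{equation*}
BDG gives $\wdh\EE|Y_\sigma|^\kappa\le C\sup_r\wdh\EE|\CZ_r|^\kappa\,(\sigma-s)^{\kappa(1/2-\alpha-\vartheta)}$. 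The $\Ga^\dag$ term is easier, since $\Ga^\dag$ is bounded in $\CL_2(H)$. The $B^j$ and jump terms are treated directly via \cite{MPR10}. Altogether $\Phi$ maps $\HH_\kappa$ into itself, and $\Phi(\CZ)-\Phi(\CZ')$ is bounded by $C(T-s)^{\delta}|\CZ-\CZ'|_{\HH_\kappa}$ for some $\delta>0$. Hence $\Phi$ is a contraction on a short interval, or on the whole interval for an equivalent weighted norm, and concatenating intervals gives existence and uniqueness. The same computation with $\CZ$ in place of the solution, combined with \gronwall, gives \eqref{mom-CY}, and applied to differences it gives Lipschitz dependence on $y$. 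We will also check that the c\`adl\`ag property survives: the jump convolution has a c\`adl\`ag version by \cite{MPR10} because $A$ generates a contraction semigroup, and the other terms are continuous.

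For the approximations, equation \eqref{CY-m} is of the same form with the sums truncated, and all constants above are uniform in $m,n$. The truncated sums satisfy the same bounds \emph{a fortiori}, since $\sum_{i\le n}\le\sum_i$. To prove $\CY^{m,n}\to\CY$ we subtract the equations. The difference $D=\CY-\CY^{m,n}$ solves a linear equation of the same type with zero initial datum, and its forcing consists of
\begin{equation*}
\sum_{i>n}e^{(t-r)A}K_i(r)\CY_r\,d\beta^i_r \quad\text{and}\quad \sum_{i>m}e^{(t-r)A}K_i(r)\chi_r\,d\beta^i_r .
\end{equation*}
By the contraction estimate and \gronwall, $|D|_{\HH_\kappa}$ is bounded by the $\HH_\kappa$-norm of these remainder convolutions, again estimated through factorization. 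Each one tends to zero by dominated convergence. For fixed $r$, the tail $\sum_{i>n}|e^{(\sigma-r)A}K_i(r)\CY_r|^2\to0$ because the full series equals a Hilbert--Schmidt norm squared. The tail is dominated by $C(\sigma-r)^{-2\vartheta}|\CY_r|^2$, which is integrable against $(\sigma-r)^{-2\alpha}$ with $\CY\in\HH_\kappa$. The remaining limits $\CY^{m,n}\to\CY^{\infty,n}$, $\CY^{m,\infty}$, as well as $\CY^{\infty,n}\to\CY$ and $\CY^{m,\infty}\to\CY$, follow in the same way by comparing the appropriate pair of equations.

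The main obstacle is the supremum-in-time estimate for the cylindrical convolution with the singular kernel, and its tail version, which must hold uniformly in the truncation parameters while coexisting with the jump convolution. The factorization method does not work directly for Poisson integrals. We would therefore separate the noises by linearity: factorization is used only for the Wiener parts, and the contraction-semigroup maximal inequality only for the $\wdt N$ part, which carries no singular kernel.
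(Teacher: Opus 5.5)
Your plan matches the paper's proof in its main structure. Both use a fixed point for the mild map on $\HH_\kappa([s,T])$ (the paper uses a weighted sup-norm instead of short intervals), factorization for the cylindrical convolutions, BDG for the $B^j$ terms, the contraction-semigroup maximal inequality of \cite{MPR10} for the $\wdt N$-convolution, and Gr\"{o}nwall. Your tail and dominated-convergence argument supplies the convergences the paper only calls direct.

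\textbf{The $\Ga^\dag$-term gives the wrong norm.} You treat the $\Ga^\dag$-convolution using only that $\Ga^\dag$ is bounded in $\CL_2(H)$. That yields a bound by $|\Ga^\dag|_{L^\infty(\CL_2(H))}^\kappa$, whereas \eqref{mom-CY} asks for the weaker norm $|\Ga^\dag|_{L_\CP^\infty(\CL(H))}^\kappa$. The difference is not cosmetic. The operators this estimate is meant for are bounded in $\CL(H)$ uniformly in $n$ but not in $\CL_2(H)$. Examples are $\Ga^n$ built from $\fT_n$ in the proof of \thmref{thm:main} and the rank-$n$ operators in \eqref{Ga-n}. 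The missing ingredient is \assmref{ass1}, which gives $|e^{(t-r)A}\Ga^\dag_r|_{\CL_2(H)}\le C(t-r)^{-\vartheta}|\Ga^\dag_r|_{\CL(H)}$ as in \eqref{e-Ga}. You then have to run the $\Ga^\dag$-term through the same factorization as the $\chi$-term, which is what the paper does via \remref{rem:est-Ki}.

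\textbf{The reduction to small $\kappa$ is too quick.} You are right that factorization needs $\alpha\in(1/\kappa,1/2-\vartheta)$; the paper's choice $0<\dl<1/2-\vartheta$ overlooks this. But ``smaller $\kappa$ follows by H\"{o}lder'' does not work as written. For $\kappa\le 2/(1-2\vartheta)$ your factorization estimate does not close the contraction in $\HH_\kappa$. Unconditional H\"{o}lder only gives $(\wdh\EE|y|^{\kappa'})^{\kappa/\kappa'}$ for some large $\kappa'$, which is not the stated bound and may be infinite for $y\in L^\kappa$. One repair:
\begin{itemize}
\item The bounded forcing terms can be estimated in $\HH_{\kappa'}$ directly.
\item For the part driven by $y$, write $\CY^y$ for the solution with zero forcing. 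Linearity gives $\indi_A\CY^{y}=\CY^{\indi_A y}$ for $A\in\CF_s$, hence $\wdh\EE[\sup_t|\CY^y_t|^{\kappa'}\,|\,\CF_s]\le C|y|^{\kappa'}$ for $y\in L^{\kappa'}$.
\item Conditional Jensen then gives $\wdh\EE\sup_t|\CY^y_t|^\kappa\le C\,\wdh\EE|y|^\kappa$. Truncating $y$ extends existence and the Lipschitz bound to $y\in L^\kappa$.
\item Uniqueness for such $\kappa$ is best obtained from the pointwise $L^2$ estimate with Henry's Gr\"{o}nwall lemma, as in \propref{prop:mom-2}.
\end{itemize}
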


\begin{proof}
The existence for SPDE with compensated Poisson random measures
under \assmref{ass} is well-known, we omit the details but refer to \cite{MPR10}. The idea is to define a mapping $\ST$ from $\HH_\kappa([s,T])$ to itself by
\beqa{def-CT}\barray
\ST(\CY)_t &\!\!\!\! = e^{(t-s)A} \CY_0 + \int_s^t e^{(t-\tau)A} (\CQ_\tau^1 \CY_\tau +\CR_\tau^\dag) d\tau +  \int_s^t e^{(t-\tau)A}\big[\CQ_\tau^2 \CY_\tau +\CR_\tau^2 \big] dB_\tau^j\\
& \disp + \sum_{i=1}^\infty \int_s^t e^{(t-\tau)A}  K_i(\tau) \CY_\tau d\beta_\tau^i +\sum_{i=1}^\infty \int_s^t e^{(t-\tau)A}K_i(\tau)\chi_\tau d\beta_\tau^i  \\
& + \sum_{i=1}^\infty \int_s^t e^{(t-\tau)A}  \Ga_\tau^\dag e_i d\beta_\tau^i + \int_{\Xi_s^t} e^{(t-\tau)A} \big[\CQ_\tau^3(\al) \CY_\tau + \CR_\tau^3(\al)\big]  \wdt N(d\al,d\tau).
\earray\eeqa
By showing this map is a contraction under the equivalent norm of $\HH_\kappa([s,T])$, namely, $\|\CY\|^\kappa = \wdh \EE\sup_{t\in [s,T]} e^{-\kappa \varrho t} |\CY_t|_H^\kappa$ for some $\varrho>0$, the existence is then guaranteed by the Banach fixed point theorem.

We proceed to prove \eqref{mom-CY}. The boundedness of $\CQ^1$ implies
\beqa{QR-1}\ad\!\!\!\!\!\!
\wdh \EE\sup_{t\in [s,T]}\Big|\int_s^t\! e^{(t-\tau)A}( \CQ_\tau^1 \CY_\tau + \CR_\tau^\dag)d\tau \Big|^\kappa \!\leq C_T \int_s^T \wdh \EE\sup_{\sg\in [s,\tau]}|\CY_{\sg}|^\kappa d\tau + \wdh \EE\bigg[\int_s^T\! |\CR_\tau^\dag |d\tau \bigg]^\kappa.
\eeqa
The definition of $K_i(t)$ in \eqref{Ki} and stochastic factorization formula in \cite[Theorem 5.10]{DZ14} yield that there exist a $0<\dl< 1/2- \vartheta$ and a constant $c_\dl>0$ such that
\beq{fact}
\sum_{i=1}^\infty
\int_s^t\!\! e^{(t-\tau)A}  K_i(\tau)\CY_\tau d\beta_\tau^i = \int_s^t\!\! e^{(t-\tau)A} \nabla_x\wdh G_1(\tau)\CY_\tau dW_\tau= c_\dl \int_s^t\!\! (t-\tau)^{\dl-1} e^{(t-\tau)A} \bar \CY_\tau d\tau,
\eeq
where
$ \bar \CY_\tau:= \int_s^\tau (\tau-\sg)^{-\dl} e^{(\tau-\sg)A} \nabla_x \wdh G_1(\sg) \CY_\sg dW_{\sg}$. The BDG inequality implies
\beqa{bar-CY}\ad\!\!\!\!\!
\wdh \EE|\bar \CY_\tau|^\kappa  =\wdh \EE\Big|\int_s^{\tau} (\tau-\sg)^{-\dl} e^{(\tau-\sg)A} \nabla_x \wdh G_1(\sg) \CY_{\sg}dW_{\sg}\Big|^\kappa \\
\aad\!\!\!\!\! \leq C\,  \wdh \EE \Big[ \int_s^\tau (\tau-\sg)^{-2\dl} |e^{(\tau-\sg)A} \nabla_x \wdh G_1(\sg)\CY_{\sg}|_{\CL_2(H)}^2 d\sg \Big]^{\kappa/2}  \\
\aad\!\!\!\!\!\! \leq C\, \wdh \EE \Big[ \int_s^\tau (\tau-\sg)^{-2\dl -2\vartheta} |\CY_{\sg}|^2 d\sg \Big]^{\kappa/2} \leq C\, \wdh\EE \sup_{\sg \in [s,\tau]} |\CY_{\sg}|^\kappa \Big(\int_s^\tau (\tau-\sg)^{-2\dl-2\vartheta} d\sg\Big)^{\kappa/2}.
\eeqa
Consequently, the \holder inequality gives
\bea\disp
\wdh \EE\sup_{t\in [s,T]}\Big|\sum_{i=1}^\infty \int_s^t e^{(t-\tau)A} K_i(\tau)\CY_\tau d\beta_\tau^i \Big|^\kappa \ad = c_\dl^\kappa \, \wdh \EE\sup_{t\in [s,T]} \Big|\int_s^t (t-\tau)^{\dl-1}e^{(t-\tau)A} \bar \CY_\tau d\tau \Big|^\kappa \\
\aad \leq C_{\kappa, \dl,T} \int_s^T \wdh \EE\sup_{\sg\in [s,\tau]} |\CY_{\sg}|^\kappa d\tau.
\eea
Similarly, \remref{rem:est-Ki} and the boundedness of $\Ga^\dag$ imply that
\bea\disp
\wdh \EE\sup_{t\in [s,T]}\Big|\sum_{i=1}^\infty \int_s^t e^{(t-\tau)A} K_i(t)\chi_\tau d\beta_\tau^i \Big|_H^\kappa \ad \leq C|\chi|_{L_\CP^\infty([s,T]\times \Omega;H)}^\kappa,  \\
\disp \wdh \EE\sup_{t\in [s,T]}\Big|\sum_{i=1}^\infty \int_s^t e^{(t-\tau)A} \Ga_\tau^\dag  e_i d\beta_\tau^i\Big|_H^\kappa \ad \leq C |\Ga^\dag|_{L_\CP^\infty([s,T]\times \Omega;H)}^\kappa.
\eea
Moreover, the BDG inequality and the boundedness of $\CQ^2$ and $\CR^2$ yield
\bea\ad
\wdh \EE \sup_{t\in [s,T]} \Big|\int_s^t e^{(t-\tau)A}(\CQ_\tau^2 \CY_\tau + \CR_\tau^2) dB_\tau^j \Big|^\kappa \leq C_T \Big[1+\int_s^T \wdh \EE\sup_{\sg\in[s,\tau]}|\CY_{\sg}|^\kappa d\tau \Big].
\eea
For the jump term in \eqref{def-CT}, employing the maximal inequality for stochastic convolutions driven by compensated Poisson random measures (see \cite[Proposition 3.3]{MPR10} and \cite{PZ07,ZBH17}) gives
\bea\ad\!\!\!\!\!\!\!
\wdh \EE\sup_{t\in [s,T]}\Big|\int_{\Xi_s^t} e^{(t-\tau)A} [\CQ_\tau^3(\al)\CY_\tau+\CR_\tau^3(\al)] \wdt N(d\al,d\tau)\Big|^\kappa \\
\aad\!\!\!\!\!\!\! \leq C \,\wdh \EE\int_s^T\!\! \bigg[\int_{\Xi} |\CQ_\tau^3(\al)\CY_\tau+ \CR_\tau^3(\al)|^\kappa \pi(d\al) + \Big(\int_\Xi |\CQ_\tau^3(\al)\CY_\tau +\CR_\tau^3(\al)|^2 \pi(d\al) \Big)^{\kappa/2} \bigg] d\tau \\
\aad\!\!\!\!\!\!\! \leq C\,
\wdh \EE \int_s^T \Big (|\CQ_\tau^3|_{L^\kappa(\Xi;\pi)}^\kappa + |\CQ_\tau^3|_{L^2(\Xi; \pi)}^\kappa \Big) |\CY_\tau|^\kappa + \Big(|\CR_\tau^3|_{L^\kappa(\Xi;\pi)}^\kappa + |\CR_\tau^3|_{L^2(\Xi;\pi)}^\kappa \Big) d\tau  \\
\aad\!\!\!\!\!\!\! \leq C\Big(1+\int_s^T \EE\sup_{\sg\in [s,\tau]}|\CY_{\sg}|^\kappa d \tau \Big).
\eea
Combining above estimates and applying Gr\"{o}nwall's inequality yield \eqref{mom-CY}. The convergence results follow directly. The Lipschitz continuity of the solution map $y\mapsto \CY^y$ is established by an argument analogous to that of \cite[Theorem 2.3]{MPR10}. The details are omitted. The proof is complete.
\end{proof}

\begin{prop}\label{prop:mom-2}
Suppose that Assumptions \ref{ass} and \ref{ass1} hold, we obtain
\beq{sup-L2}
\sup_{t\in [s,T]}\wdh \EE|\CY_t|^2 \leq C\,\wdh \EE \bigg[1+|y|^2 + \bigg(\int_s^T |\CR^\dag_\tau| d\tau \bigg)^2+ |\chi|_{L_\CP^\infty([s,T]\times \Omega;H)}^2 + \int_s^T |\Ga_t^\dag|^2_{\CL_2(H)}dt \bigg]
\eeq
and for any $s\leq t \leq T$,
\beqa{CY-L2}
\wdh \EE|\CY_t|_H^2 \ad \leq C\,\wdh \EE \bigg[1+|x|^2+ \bigg(\int_s^t |\CR_\tau^\dag| d\tau \bigg)^2+ \int_s^t (t-\tau)^{-2\vartheta} |\chi_\tau|^2 d\tau \\
\aad\qquad\quad  + \int_s^t (t-\tau)^{-2\vartheta} |\Ga_\tau^\dag|_{\CL(H)}^2 d\tau \bigg].
\eeqa
\end{prop}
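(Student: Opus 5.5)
We will work directly with the mild formulation \eqref{def-CT} of \eqref{CY}. The difference from \thmref{thm:sup-norm} is that we now estimate second moments pointwise in $t$ rather than $\sup_t$. This lets us use the It\^o isometry in place of the factorization method. As a result, the inputs $\chi$ and $\Ga^\dag$ enter only through weighted $L^2$-in-time quantities, not through $L^\infty$ norms. Fix $t\in[s,T]$ and write $\CY_t$ as the sum of eight terms: the initial term $e^{(t-s)A}y$, the drift convolution, the $B^j$-convolution, the three $\beta^i$-convolutions (with integrands $K_i\CY$, $K_i\chi$, $\Ga^\dag e_i$), and the jump convolution. Then $\wdh\EE|\CY_t|^2$ is bounded by a constant times the sum of the squared second moments of these terms. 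Since $e^{tA}$ is a contraction, $|e^{(t-s)A}y|\le|y|$. (We read the ``$|x|^2$'' in \eqref{CY-L2} as the initial datum $|y|^2$.) By boundedness of $\CQ^1$ and Cauchy--Schwarz, the drift term is bounded by $C\int_s^t\wdh\EE|\CY_\tau|^2d\tau+C\wdh\EE(\int_s^t|\CR^\dag_\tau|d\tau)^2$.

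For the stochastic terms we use the It\^o isometry under $\wdh\QQ$, where the $\beta^i$ and $B^j$ are independent Brownian motions and $\wdt N$ is a compensated PRM. The $B^j$ term gives $C\int_s^t(1+\wdh\EE|\CY_\tau|^2)d\tau$. The jump term gives $\wdh\EE\int_s^t\int_\Xi|e^{(t-\tau)A}(\CQ^3_\tau\CY_\tau+\CR^3_\tau)|^2\pi(d\al)d\tau\le C\int_s^t(1+\wdh\EE|\CY_\tau|^2)d\tau$, by the assumed $L^2(\Xi;\pi)$ bounds on $\CQ^3,\CR^3$. Here no maximal inequality is needed, since we only need the value at the fixed time $t$. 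For the $K_i$ terms we invoke \remref{rem:est-Ki}. This gives
\[
\wdh\EE\Big|\sum_i\int_s^te^{(t-\tau)A}K_i(\tau)\CY_\tau d\beta^i_\tau\Big|^2=\wdh\EE\int_s^t\sum_i|e^{(t-\tau)A}K_i(\tau)\CY_\tau|^2d\tau\le C\int_s^t(t-\tau)^{-2\vartheta}\wdh\EE|\CY_\tau|^2d\tau,
\]
and similarly the $\chi$-term is bounded by $C\wdh\EE\int_s^t(t-\tau)^{-2\vartheta}|\chi_\tau|^2d\tau$. For the $\Ga^\dag$ term the isometry yields $\wdh\EE\int_s^t|e^{(t-\tau)A}\Ga^\dag_\tau|^2_{\CL_2(H)}d\tau$. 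We bound this in two ways. First, using contractivity, $|e^{(t-\tau)A}\Ga^\dag_\tau|_{\CL_2}\le|\Ga^\dag_\tau|_{\CL_2}$, which produces the form needed for \eqref{sup-L2}. Second, using \assmref{ass1}, $|e^{(t-\tau)A}\Ga^\dag_\tau|_{\CL_2}\le|e^{(t-\tau)A}|_{\CL_2}|\Ga^\dag_\tau|_{\CL(H)}\le C(t-\tau)^{-\vartheta}|\Ga^\dag_\tau|_{\CL(H)}$, which produces the form in \eqref{CY-L2}.

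Collecting the terms gives, with $\phi(t):=\wdh\EE|\CY_t|^2$ (finite by \thmref{thm:sup-norm}),
\[
\phi(t)\le C\,\mathcal{G}(t)+C\int_s^t\big[1+(t-\tau)^{-2\vartheta}\big]\phi(\tau)\,d\tau,
\]
where $\mathcal{G}(t)$ denotes the corresponding right-hand side bracket. The main obstacle is closing this estimate, because the kernel $(t-\tau)^{-2\vartheta}$ is singular. Since $2\vartheta<1$, we apply the generalized (Henry-type) Gr\"onwall lemma for weakly singular kernels. For \eqref{sup-L2} the forcing bracket is nondecreasing in $t$ (we replace it by its value at $T$), so the lemma gives $\phi(t)\le C\mathcal{G}(T)$ uniformly in $t$, and we take $\sup_t$. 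For \eqref{CY-L2} the forcing $\mathcal{G}(t)$ itself contains singular convolutions and is not monotone. Here we iterate the inequality once, substituting the bound for $\phi(\tau)$ into the integral. We then use $\int_\tau^t(t-r)^{-2\vartheta}(r-\tau)^{-2\vartheta}dr\le C(t-\tau)^{1-4\vartheta}$ if $4\vartheta<1$, or more generally iterate finitely many times until the kernel becomes bounded, as in the Henry lemma. This lets us absorb $\int(t-r)^{-2\vartheta}\mathcal{G}(r)dr$ back into $C\mathcal{G}(t)$ plus a bounded multiple of $\sup\phi$. The latter is already controlled by \eqref{sup-L2}, which is legitimate provided we bound $\int|\Ga^\dag|^2_{\CL_2}$ there via the $\CL(H)$ form. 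Fubini together with the beta-function identity makes each absorption explicit, and this yields \eqref{CY-L2}.
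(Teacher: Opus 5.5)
Your termwise It\^o-isometry bounds and your proof of \eqref{sup-L2} via Henry's singular Gr\"onwall lemma are essentially the paper's argument. Your two bounds for the $\Ga^\dag$-term are exactly \eqref{e-Ga} and its contractive counterpart. The gap is in the closing step for \eqref{CY-L2}. You propose to bound the leftover $\phi$-term by a multiple of $\sup_r\phi(r)$ and to control that by \eqref{sup-L2}. But the right-hand side of \eqref{sup-L2} contains $|\chi|^2_{L^\infty}$ and $\int_s^T|\Ga^\dag_\tau|^2_{\CL_2(H)}d\tau$. Even if you redo the sup-estimate with the $\CL(H)$ form, you get $\sup_{r}\int_s^r(r-\tau)^{-2\vartheta}\fa(\tau)\,d\tau$ with $\fa=\wdh\EE(|\chi|^2+|\Ga^\dag|^2_{\CL(H)})$. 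This is not dominated by $\int_s^t(t-\tau)^{-2\vartheta}\fa(\tau)\,d\tau$ at the fixed $t$: take $\fa$ concentrated on a short interval ending at some $r<t$. So you end with an inequality strictly weaker than \eqref{CY-L2}.

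This is not cosmetic. In \lemref{lem:Q1-m}, \eqref{CY-L2} is used at $t=T$ with $\chi$ replaced by $(T-\cdot)^\vartheta\chi$, so that only $|\chi|_{L^2_\CP}$ appears. In the trace-class estimate for $Q_1$ it is used with $|\Ga^n_t|_{\CL(H)}\le\mathfrak{p}(t)$, so that only $\wdh\EE\int_0^T(T-t)^{-2\vartheta}\mathfrak{p}^2(t)\,dt$ appears. An $L^\infty$ or $\CL_2$ norm of the data would break the density and duality arguments there.

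The missing observation is that the forcing $\fh(t)=\fb(t)+\int_s^t(t-\tau)^{-2\vartheta}\fa(\tau)\,d\tau$, with $\fb$ nondecreasing, is stable under convolution with the kernel. By Fubini and
$\int_\sigma^t(t-\tau)^{-2\vartheta}(\tau-\sigma)^{-2\vartheta}\,d\tau=\mathrm{B}(1-2\vartheta,1-2\vartheta)(t-\sigma)^{1-4\vartheta}\le C_T(t-\sigma)^{-2\vartheta}$,
one has $\int_s^t(t-\tau)^{-2\vartheta}\fh(\tau)\,d\tau\le C\fh(t)$. This bound holds for every $\vartheta<1/2$, so you need neither $4\vartheta<1$ nor repeated iteration.

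Henry's lemma needs no monotonicity of the forcing. It therefore gives $\fu(t)\le\fh(t)+C\int_s^t(t-\tau)^{-2\vartheta}\fh(\tau)\,d\tau\le C\fh(t)$ directly, which is exactly the paper's proof.

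If you prefer to keep your iteration, there is an equally valid repair. Once the kernel is bounded, keep the remainder as $C\int_s^t\phi(r)\,dr$ and apply the classical Gr\"onwall inequality. Then absorb $\int_s^t\mathcal G(r)\,dr\le C\mathcal G(t)$ by the same Fubini computation, instead of passing through $\sup_r\phi(r)$.
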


\begin{proof}
We first note that
\beq{e-Ga}
|e^{(t-\tau)A} \Ga_\tau^\dag|_{\CL_2(H)} \leq |e^{(t-\tau)A}|_{\CL_2(H)} |\Ga_\tau^\dag|_{\CL(H)}\leq C (t-\tau)^{-\vartheta}|\Ga_\tau^\dag|_{\CL(H)}.
\eeq
To prove \eqref{sup-L2}, we consider the mild formulation of $\CY_t$ in \eqref{def-CT} with $\CY_t$ substituted for $\ST(\CY)_t$. For the jump term in \eqref{def-CT}, the \ito isometry for stochastic integrals driven by compensated Poisson random measures together with
\eqref{e-Ga} (with $\Ga^\dag$ replaced by $\CQ_\tau^3(\al)\in \CL(H)$) yield that
\bea\ad
\wdh \EE \Big|\int_s^t \int_{\Xi} e^{(t-\tau)A}[\CQ_\tau^3 (\al)\CY_\tau + \CR_\tau^3(\al)] \wdt N(d\al, d\tau) \Big|^2 \\
\aad \leq C\, \wdh \EE\int_s^t \int_{\Xi} |e^{(t-\tau)A}[\CQ_\tau^3(\al) \CY_\tau + \CR_\tau^3(\al)] |^2  \pi(d\al)d\tau \\
\aad \leq C  \int_s^t (t-\tau)^{-2\vartheta} \wdh \EE \Big[|\CQ_\tau^3|_{L^2(\Xi;\pi)}^2 |\CY_\tau|^2 \Big]  d\tau + \wdh \EE \int_s^t |\CR_\tau^3|_{L^2(\Xi;\pi)}^2 d\tau \\
\aad \leq C\bigg(1+\int_s^t (t-\tau)^{-2\vartheta} \wdh \EE|\CY_\tau|^2 d\tau \bigg).
\eea
For the rest of terms on the r.h.s. of \eqref{def-CT}, from \assmref{ass1} and \remref{rem:est-Ki}, we can apply similar arguments as above to obtain
%, and thus obtain
\beqa{E-CY-2}\!\!
\wdh \EE|\CY_t|^2 \ad\!\!\! \leq C\, \wdh \EE \bigg\{1+|y|^2 + \bigg(\int_s^t |\CR_\tau^\dag |d\tau \bigg)^2 + \int_s^t (t-\tau)^{-2\vartheta} |\chi_\tau|^2 d\tau  \\
\aad\qquad\quad + \int_s^t \big|e^{(t-\tau)A} \Ga_\tau^\dag \big|_{\CL_2(H)}^2 d\tau + \int_s^t (t-\tau)^{-2\vartheta} \EE|\CY_\tau|^2 d\tau \bigg\} \\
\aad\!\!\!\!\!\! \leq C\,\wdh \EE\bigg[ 1+|y|^2 + \bigg(\int_s^T |\CR_\tau^\dag| d\tau \bigg)^2 + |\chi|_{L^\infty}^2+\int_s^T|\Ga_\tau^\dag|_{\CL_2(H)}^2 d\tau \\
\aad \qquad\quad + \int_s^t  (t-\tau)^{-2\vartheta} \EE|\CY_\tau|^2 d\tau \bigg].
\eeqa
By a variant of \gronwall inequality in \cite[Lemma 7.1.1]{Hen81}, we can obtain the estimate \eqref{sup-L2}. The Lipschitz continuity of the solution map $y\mapsto \CY^y$ in $\CH_2([s,T])$ follows a similar argument with above; see also \cite[Theorem 2.3]{MPR10} for details.

To obtain estimate of \eqref{CY-L2} with the norm $|\Ga|_{\CL(H)}$, we utilize \eqref{e-Ga} again. Define
\bea\!\!
\mathfrak{u}(t) := \wdh \EE|\CY_t|^2,\ \mathfrak{a}(t) := C\, \wdh \EE \big(|\chi_t|^2+|\Ga_t|_{\CL(H)}^2 \big),\ \fb(t) : =C\, \wdh \EE \big[1+|y|^2+\big(\int_s^t |\CR_\tau^1|d\tau \big)^2\big].
\eea
Plugging \eqref{e-Ga} into the first inequality of \eqref{E-CY-2}, using the
above notation, we obtain
\bea
\fu(t)\leq \fb(t)+\int_s^t (t-\tau)^{-2\vartheta} \fa(\tau)d\tau+ C\, \int_s^t (t-\tau)^{-2\vartheta} \fu(\tau)d\tau.
\eea
Set $\mathfrak{h}(t):=\fb(t)+\int_s^t (t-\tau)^{-2\vartheta}\fa(\tau)d\tau$. Using Gr\"{o}nwall's
inequality in \cite{Hen81} again gives
\bea
\fu(t)\leq \fh(t)+C \int_s^t(t-\tau)^{-2\vartheta} \mathfrak{h}(\tau)d\tau.
\eea

Let us compute the second term on the r.h.s. of the above inequality:
\bea
\int_s^t (t-\tau)^{-2\vartheta} \fh(\tau)d\tau & =\int_s^t (t-\tau)^{-2\vartheta} \fb(\tau)d\tau + \int_s^t (t-\tau)^{-2\vartheta} \int_s^{\tau} (\tau-\sg)^{-2\vartheta} \fa(\sg)d\sg d\tau \\
&  \leq \fb(t)\int_s^t (t-\tau)^{-2\vartheta}d\tau + \int_s^t \fa(\sg) \int_{\sg}^t (t-\tau)^{-2\vartheta}(\tau-\sg)^{-2\vartheta} d\tau d\sg \\
& \leq C \fb(t) +C \int_s^t (t-\sg)^{-2\vartheta} \fa(\sg)d\sg,
\eea
where the second line follows from the
changing
order of integration, and the third line follows from
the estimate $
\int_{\sg}^t (t-\tau)^{-2\vartheta}(\tau-\sg)^{-2\vartheta} d\tau \leq C (t-\sg)^{-2\vartheta}$. Consequently, \eqref{CY-L2} holds, and the proof is complete.
\end{proof}

\subsection{Existence of singular BSPDEs with jumps}
Consider the following class of backward SPDEs with jumps
of the form
\beqa{CZ}\barray
-d Z_t &\disp = \Big[A^* Z_t + \CV_t Z_t + \sum_{i=1}^\infty K_i^*(t) Q_{1,t} e_i + \CE_t^j Q_{2,t}^j+ \CJ_t \Big] dt \\
& \quad -\sum_{i=1}^\infty Q_{1,t} e_i d\beta_t^i - Q_{2,t}^j dB_t^j -\int_{\Xi} Q_{3,t}(\al) \wdt N(d\al,t),\quad Z_T = \eta,
\earray\eeqa
where we assumed $\eta\in L^2(\Omega,\CF_T,\wdh\QQ,H)$, $\CJ\in L_\CP^2(\Omega\times[0,T];H)$, and $\CV,\CE^j: [0,T]\times \Omega \to H$ are progressively measurable and bounded.
To handle the series $\sum_{i=1}^\infty K_i^*(t) Q_{1,t}e_i$ in the drift of \eqref{CZ}, we employ a truncation argument.  For any $n\in \NN^+$, consider
\beqa{CZ-n}\barray
-d Z_t^n & \disp = \Big[A^* Z_t^n + \CV_t Z_t^n + \sum_{i=1}^n K_i^*(t) Q_{1,t}^n e_i + \CE_t^j Q_{2,t}^{j,n}+ \CJ_t \Big] dt \\
& \quad -\sum_{i=1}^\infty Q_{1,t}^n e_i d\beta_t^i - Q_{2,t}^{j,n} dB_t^j -\int_{\Xi} Q_{3,t}^n (\al) \wdt N(d\al,t), \quad Z_T^n=\eta.
\earray\eeqa

The following proposition establishes
the existence and uniqueness of solutions to the semilinear backward stochastic evolution equation (BSEE) with jumps for \eqref{CZ-n}.

\begin{prop}\label{prop:ext-Pn}
There exists a unique mild solution $(Z^n, Q_{1,t}^n, Q_{2}^{j,n}, Q_{3}^n\cd)$ of \eqref{CZ-n} such that $Z^n\in L_\CP^2(\Omega\times [0,T]; H)$, $Q_1^n \in L_\CP^2(\Omega\times [0,T];\CL_2(H))$, $Q_2^{j,n}\in L_\CP^2(\Omega\times [0,T];H)$, and $Q_3^n \in \FF_\CP^2([0,T]; H)$ satisfying
\beqa{mild-Zn}\!\!\!
Z_s^n\ad\!\!\! = e^{(T-s)A^*}\eta+ \int_s^T e^{(t-s)A^*} \Big[\CV_t Z_t^n +\sum_{i=1}^n K_i^*(t) Q_{1,t}^n e_i + \CE_t^j Q_{2,t}^{j,n}+ \CJ_t \Big]dt \\
\aad \!\!\! -\sum_{i=1}^\infty \int_s^T\!\! e^{(t-s)A^*} Q_{1,t}^n e_i d\beta_t^i -\!\! \int_s^T\!\! e^{(t-s)A^*} Q_{2,t}^{j,n} dB_t^j -\! \int_{\Xi_s^T} \! \! e^{(t-s)A^*} Q_{3,t}^n(\al) \wdt N(d\al,dt).
\eeqa
Moreover, the following duality
%formula
holds
\beqa{dual-n}\ad\!\!
\wdh \EE\qv{Z_s^n,y} + \wdh \EE\int_s^T \Big[\qv{Z_t^n, \CR_t^\dag} + \sum_{i=1}^m \qv{Q_{1,t}^n e_i, K_i(t)\chi_t} + \qv{Q_{1,t}^n, \Ga_t^\dag}_{\CL_2(H)}+ \qv{Q_{2,t}^{j,n}, \CR_t^{2,j}}  \\
\aad +\int_{\Xi_s^T} \qv{Q_{3,t}^n(\al), \CR_t^3(\al) } \pi(d\al) \Big] dt  = \wdh \EE\qv{\eta, \CY_T^{m,n}} + \wdh \EE\int_s^T \qv{\CJ_t, \CY_t^{m,n}} dt
\eeqa
where $\CY_t^{m,n}$ is the solution of \eqref{CY-m}.
\end{prop}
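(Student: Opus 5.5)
For fixed $n$ the truncated drift term $\sum_{i=1}^n K_i^*(t)Q_{1,t}^n e_i$ is a bounded linear function of $Q_1^n$. Indeed, $|K_i(t)|_{\CL(H)}\le C$ by \remref{rem:est-Ki}, so
\[
\Big|\sum_{i=1}^n K_i^*(t)Q_{1,t}^ne_i\Big|_H\le C\sqrt n\,|Q_{1,t}^n|_{\CL_2(H)} .
\]
Hence \eqref{CZ-n} is a linear backward stochastic evolution equation with jumps whose generator is Lipschitz in $(Z,Q_1,Q_2^j,Q_3)$. The constant depends on $n$, but this is harmless at this stage.

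For existence and uniqueness, I would run the standard Picard scheme in the mild formulation \eqref{mild-Zn}. Given $(\bar Z,\bar Q_1,\bar Q_2^j)$ in the product $L^2_\CP$ space, set
\[
\xi_s:=e^{(T-s)A^*}\eta+\int_s^T e^{(t-s)A^*}f_t\,dt,
\]
with $f_t$ the frozen drift. To identify the martingale parts, I would apply the martingale representation theorem on $(\Omega,\CF,\CF_t,\wdh\QQ)$ for the triple $(W,B,\wdt N)$. This requires the filtration to be generated by these drivers (augmented), as is implicit in the paper's BSDE setting. Applied to $\wdh\EE[\eta\mid\CF_s]$ and to $\wdh\EE[f_t\mid\CF_s]$, it produces $Q_1\in\CL_2(H)$, $Q_2^j$ and $Q_3\in\FF^2_\CP$.

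Rather than handling the semigroup convolution directly, I would follow the Hu--Peng / Tessitore approach used in \cite{FHT18}: first solve with $A$ replaced by its Yosida approximations, or equivalently work with the mild form and define $Q$ via representation of $\eta$ and $f_t$. The map is a contraction in the weighted norm
\[
\wdh\EE\int_0^T e^{\beta t}\big(|Z_t|^2+|Q_{1,t}|^2_{\CL_2}+|Q_{2,t}|^2+|Q_{3,t}|^2_{L^2(\pi)}\big)\,dt
\]
for $\beta$ large, by the usual It\^o-isometry estimates. These estimates use contractivity of $e^{tA^*}$ and the isometry for $\wdt N$. This yields the unique mild solution.

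The duality \eqref{dual-n} is where care is needed. The difficulty is that neither equation is a strong solution, so It\^o's formula for $\qv{Z^n_t,\CY^{m,n}_t}$ cannot be applied directly. I would regularize with Yosida approximations $A_k=kA(k-A)^{-1}$. Replacing $A,A^*$ by $A_k,A_k^*$ in \eqref{CY-m} and \eqref{CZ-n} gives strong solutions $\CY^{m,n,k}$ and $Z^{n,k}$. I would then apply the It\^o product formula, including the jump bracket term
\[
\int_{\Xi}\qv{Q^{n,k}_3(\al),\CQ^3\CY+\CR^3}\,\pi(d\al),
\]
and take expectations so that the martingale terms vanish. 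The terms $\qv{A_k^*Z,\CY}-\qv{Z,A_k\CY}$ cancel. The coupling terms $\sum_{i\le n}\qv{K_i^*Q_1e_i,\CY}$ from the drift of $Z$ cancel against the cross-variation $\sum_{i\le n}\qv{Q_1e_i,K_i\CY}$. The $\CQ^1$, $\CQ^{2,j}$, $\CQ^3$ terms likewise cancel against $\CV$, $\CE^j$ and the jump coefficient. This last cancellation presumes that $\CV$, $\CE^j$ and the jump coefficient are the adjoints of $\CQ^1$, $\CQ^{2,j}$ and $\CQ^3$, as intended in the application. What remains is exactly \eqref{dual-n}: the inhomogeneous pieces $\CR^\dag$, $K_i\chi$ for $i\le m$, $\Ga^\dag$, $\CR^{2,j}$, $\CR^3$ pair with $Z,Q_1,Q_2,Q_3$, and $\CJ$ pairs with $\CY$.

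Finally, I would let $k\to\infty$. Convergence $\CY^{m,n,k}\to\CY^{m,n}$ in $\HH_2$ follows as in \thmref{thm:sup-norm}, since the estimates there are uniform in contraction semigroups. Convergence of $Z^{n,k}$ and $Q^{n,k}$ follows from the contraction estimates with constants uniform in $k$, together with strong convergence $e^{tA_k}\to e^{tA}$ and dominated convergence. The main obstacle I expect is this last passage: the jump and $\CL_2(H)$-valued martingale components must be controlled uniformly in $k$. The $n$-dependent Lipschitz bound keeps this manageable, since $n$ is fixed throughout.
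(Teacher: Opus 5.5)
Your route is the same as the paper's. Its proof only invokes successive approximation for existence and uniqueness, and ``a standard application of It\^o's formula'' for the duality. Your Picard contraction in the weighted norm (using the $C\sqrt n$ Lipschitz bound on the truncated drift and martingale representation for $(W,B,\wdt N)$ under $\wdh\QQ$), followed by Yosida regularization and the limit $k\to\infty$, is the standard way to supply those omitted details. You are also right that $\CV=(\CQ^1)^*$ and $\CE^j=(\CQ^{2,j})^*$ must be imposed, even though the proposition does not say so.

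One cancellation you claim does not occur for \eqref{CZ-n} as written. In the It\^o product formula for $\qv{Z^{n,k}_t,\CY^{m,n,k}_t}$, the bracket of the two $\wdt N$-integrals contributes $\int_\Xi\qv{Q^n_{3,t}(\al),\CQ^3_t(\al)\CY_t+\CR^3_t(\al)}\pi(d\al)$. The $\CR^3$ part is the term appearing in \eqref{dual-n}. The $\CQ^3$ part, however, has nothing to cancel against: the drift of \eqref{CZ-n} contains no term in $Q^n_3$, so there is no ``jump coefficient'' there to be the adjoint of $\CQ^3$. Literally, you obtain \eqref{dual-n} plus the extra term $\wdh\EE\int_s^T\int_\Xi\qv{Q^n_{3,t}(\al),\CQ^3_t(\al)\CY^{m,n}_t}\pi(d\al)\,dt$, so the identity fails whenever $\CQ^3\neq0$.

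The fix is to add $\int_\Xi(\CQ^3_t(\al))^*Q^n_{3,t}(\al)\pi(d\al)$ to the drift of \eqref{CZ-n}, and correspondingly to \eqref{CZ}. In \eqref{Pt}, with $\CQ^3=\nabla_x\wdh\Theta$, the matching term is $\int_\Xi\nabla_x^*\wdh\Theta(t,\al)Q_{3,t}(\al)\pi(d\al)$. The paper's one-line proof passes over the same omission, so this is a defect of the statement rather than of your method. It costs nothing in the existence part, because $|\int_\Xi(\CQ^3_t(\al))^*q(\al)\pi(d\al)|_H\le|\CQ^3_t|_{L^2(\Xi;\pi)}|q|_{L^2(\Xi;\pi)}$ keeps the generator Lipschitz, provided you include $\bar Q_3$ among the frozen inputs of your Picard map. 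You should nonetheless write the corrected equation explicitly rather than appeal to a coefficient that is not there.
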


\begin{proof}
Note that \eqref{CZ-n} has no singular drift. Hence, the existence and uniqueness of a mild solution to \eqref{CZ-n} follow directly from \cite{CH07} via successive approximation; see also \cite{OPZ05} for a variational approach. Moreover, the duality formula follows from a standard application of \ito formula (cf. \cite{HP91}). Accordingly, we omit the details.
\end{proof}

Consequently, we define a candidate solution $(Z,Q_1,Q_2^j,Q_3\cd)$ to \eqref{CZ} as the weak limit of $(Z^n,Q_1^n,Q_2^{j,n}, Q_3^n\cd)$ for $j=1,\dots,d$ in some  Hilbert space.

\begin{cor}\label{cor:conv-Pm}
We have as $n\to \infty$,
\begin{itemize}
\item[\rm{(1)}] $(Z^n,Q_1^n, Q_2^{j,n}, Q_3^n\cd)$ converges weakly to an element $(Z,Q_1,Q_2^j,Q_3\cd)$ in the space $L_\CP^2([0,T]\times \Omega;H) \times L_\CP^2([0,T]\times \Omega;\CL_2(H)) \times L_\CP^2([0,T]\times \Omega;H) \times \FF_\CP^2([0,T]; H)$;
\item[\rm{(2)}] for each $t\in [0,T]$, $Z_t^n$ converges weakly to an element $\wdt Z_t$ in $L^2(\Omega,\CF_t,\wdh \QQ,H)$.
\end{itemize}
Moreover, for $\eta\in L^2(\Omega,\CF_T,\wdh \QQ, H)$ and all $m\in \NN$, we have
\beqa{dual-Pm}
\ad
\wdh \EE\qv{\wdt Z_s,y} + \wdh \EE\int_s^T \Big[\qv{Z_t, \CR_t^\dag} + \sum_{i=1}^m \qv{Q_{1,t}e_i, K_i(t)\chi_t}+ \qv{Q_{1,t}, \Ga_t^\dag}_{\CL_2(H)} + \qv{Q_{2,t}^{j}, \CR_t^{2,j}} \\
\aad + \int_{\Xi_s^T} \qv{Q_{3,t}(\al), \CR_t^3(\al) } \pi(d\al)\Big] dt = \wdh \EE\qv{\eta, \CY_T^{m,\infty}} + \wdh \EE\int_s^T \qv{\CJ_t, \CY_t^{m,\infty}} dt,
\eeqa
where $\CY_t^{m,\infty}$ is the solution of \eqref{CY-m} with $n=\infty$. In particular, when $\chi=0$, the process $\CY^{m,n}$ does not depend on $m$, so that $\CY^{m,\infty}=\CY$ in \eqref{dual-Pm}.
\end{cor}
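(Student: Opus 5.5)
The plan is to use the duality \eqref{dual-n} of \propref{prop:ext-Pn} as the engine. From it we first extract uniform bounds on $(Z^n,Q_1^n,Q_2^{j,n},Q_3^n\cd)$, then pass to weak limits by Banach--Alaoglu in Hilbert spaces, and finally send $n\to\infty$ in \eqref{dual-n} at fixed $m$.

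\textbf{Step 1: uniform bounds.} I would obtain these by reading \eqref{dual-n} as a representation of linear functionals. Take $s=0$, $y=0$, $\chi=0$, and let only one of the data $\CR^\dag,\Ga^\dag,\CR^{2,j},\CR^3$ be nonzero. The left side of \eqref{dual-n} is then $\wdh\EE\int_0^T\qv{Z^n_t,\CR^\dag_t}dt$, and similarly for the other components. The right side is bounded by
\[
|\eta|_{L^2}\,\big(\wdh\EE|\CY^n_T|^2\big)^{1/2}+|\CJ|_{L^2}\,|\CY^n|_{\CH_2}.
\]
By \propref{prop:mom-2}, with constants independent of $n$ (the truncation only deletes terms, so Remark~\ref{rem:est-Ki} still gives the same kernel bound $(t-\tau)^{-2\vartheta}$), this is at most $C\big(|\CR^\dag|_{L^2}+|\Ga^\dag|_{L^2(\CL_2)}+|\CR^{2}|_{L^2}+|\CR^3|_{\FF^2}\big)$. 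This would need the inhomogeneous constant ``$1$'' in \eqref{sup-L2} to be absent, which holds for a linear equation with $y=0$.

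Boundedness of the data was assumed in \propref{prop:mom-2}, but it enters only through these norms. A density argument therefore extends the inequality from bounded test processes to all of $L^2$. Duality then gives $\sup_n\big(|Z^n|_{L^2}+|Q_1^n|_{L^2(\CL_2)}+|Q_2^{j,n}|+|Q_3^n|_{\FF^2}\big)<\infty$. For a fixed $t$, take $s=t$, $y\in L^2(\CF_t)$ arbitrary, and all other data zero; this bounds $\wdh\EE|Z^n_t|^2$ uniformly, using \eqref{sup-L2} with $s=t$.

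\textbf{Step 2: weak limits.} Boundedness gives weakly convergent subsequences in the product Hilbert space and in $L^2(\Omega,\CF_t;H)$. The weak limits of predictable processes stay predictable, since that subspace is closed and convex, and hence weakly closed. To obtain convergence of the full sequence rather than subsequences, I would show that the limit is characterized uniquely. For every test datum, the right-hand side of \eqref{dual-n} converges as $n\to\infty$, because $\CY^{m,n}\to\CY^{m,\infty}$ strongly in $\HH_2$ by \thmref{thm:sup-norm}. Hence the left-hand side converges for every test datum, which identifies the weak limit independently of the subsequence. Point (2) follows the same way, with $\wdt Z_t$ defined as the limit functional at each $t$.

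\textbf{Step 3: passing to the limit in \eqref{dual-n}.} The left side of \eqref{dual-n} is linear in the solution, with fixed $L^2$ test elements $y,\CR^\dag,\Ga^\dag,\CR^{2,j},\CR^3$. The finite sum $\sum_{i\le m}\qv{Q^n_{1,t}e_i,K_i(t)\chi_t}$ equals $\qv{Q_1^n,\sum_{i\le m}(K_i\chi)\otimes e_i}_{\CL_2}$, which is again a fixed $L^2(\CL_2)$ test element, so this term is where keeping $m$ fixed is essential. Weak convergence of the solution together with strong convergence of $\CY^{m,n}$ on the right yields \eqref{dual-Pm}. When $\chi=0$, the identification $\CY^{m,\infty}=\CY$ comes from \thmref{thm:sup-norm}.

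The main obstacle is Step 1, namely getting $n$-uniform constants, in particular for $Q_1^n$. For that component the estimate must use the $L^2(\CL_2)$ norm of $\Ga^\dag$, which is \eqref{sup-L2}, rather than the operator-norm version, and it requires the Henry--Gronwall constant to be independent of the truncation level. I expect both facts to follow directly from the proof of \propref{prop:mom-2}.
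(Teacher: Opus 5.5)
Your proposal is correct and takes essentially the same route as the paper, which defers to \cite[Corollary 4.8]{FHT18}. You obtain $n$-uniform bounds on $(Z^n,Q_1^n,Q_2^{j,n},Q_3^n)$ and on $Z^n_t$ from the duality \eqref{dual-n} together with the truncation-uniform, homogeneous form of \eqref{sup-L2}; you then identify the weak limits and derive \eqref{dual-Pm} at fixed $m$ from the convergence $\CY^{m,n}\to\CY^{m,\infty}$, and your observation that $\sum_{i\le m}\qv{Q^n_{1,t}e_i,K_i(t)\chi_t}$ is a pairing with a fixed $\CL_2(H)$-valued test element is exactly why $m$ must be held fixed as $n\to\infty$.
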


\begin{proof}
The proof is parallel to \cite[Corollary 4.8]{FHT18},
so the details are omitted.
\end{proof}

To proceed, we examine the regularity
of $\wdt Z_s$.

\begin{prop}\label{prop:weak-cont}
The map $s\mapsto \wdt Z_s$ from $[0,T]$ to 
$L^2(\Omega,\CF_T,\wdh \QQ,H)$
is weakly continuous.
\end{prop}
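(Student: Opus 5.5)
To prove that $s\mapsto \wdt Z_s$ is weakly continuous, I will use the duality formula \eqref{dual-Pm} of \corref{cor:conv-Pm} with the test data chosen so that only the pairing $\wdh\EE\qv{\wdt Z_s,y}$ remains on the left. Fix $y\in L^2(\Omega,\CF_T,\wdh\QQ;H)$; since weak continuity is tested against such $y$, it suffices to show that $s\mapsto \wdh\EE\qv{\wdt Z_s,y}$ is continuous. Because $\wdt Z_s$ is $\CF_s$-measurable, we have $\wdh\EE\qv{\wdt Z_s,y}=\wdh\EE\qv{\wdt Z_s,y_s}$ with $y_s:=\wdh\EE[y|\CF_s]$.

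In \eqref{dual-Pm} I take $\CR^\dag=0$, $\chi=0$, $\Ga^\dag=0$, $\CR^{2,j}=0$ and $\CR^3=0$. The identity then reads
\begin{equation*}
\wdh\EE\qv{\wdt Z_s,y_s}=\wdh\EE\qv{\eta,\CY_T^{s,y_s}}+\wdh\EE\int_s^T\qv{\CJ_t,\CY_t^{s,y_s}}\,dt,
\end{equation*}
where $\CY^{s,y_s}$ denotes the solution of \eqref{CY} started at time $s$ from $y_s$. (Since $\chi=0$, the corollary gives $\CY^{m,\infty}=\CY$.) The problem is thereby reduced to continuity in $s$ of the right-hand side. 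Extend $\CY^{s,y_s}$ by $\CY_t:=y_s$ for $t<s$.

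For $s<s'$, I would write
\begin{equation*}
\CY^{s',y_{s'}}-\CY^{s,y_s}=\big(\CY^{s',y_{s'}}-\CY^{s',\CY^{s,y_s}_{s'}}\big),
\end{equation*}
which holds by the flow property, i.e. uniqueness in \thmref{thm:sup-norm}. The Lipschitz continuity of the solution map in $L^2$, given by \propref{prop:mom-2} (see also \thmref{thm:sup-norm}), then bounds
\begin{equation*}
\sup_{t\in[s',T]}\wdh\EE\big|\CY^{s',y_{s'}}_t-\CY^{s,y_s}_t\big|^2\le C\,\wdh\EE\big|y_{s'}-\CY^{s,y_s}_{s'}\big|^2 .
\end{equation*}
It then suffices to show $\wdh\EE|y_{s'}-\CY^{s,y_s}_{s'}|^2\to0$ as $|s'-s|\to0$, both for $s'\downarrow s$ and for $s\uparrow s'$. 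I split this as
\begin{equation*}
|y_{s'}-y_s|+|y_s-\CY^{s,y_s}_{s'}|.
\end{equation*}
The first term tends to zero in $L^2$ because the martingale $t\mapsto\wdh\EE[y|\CF_t]$ is c\`adl\`ag in $L^2$. More precisely, the filtration generated by $W,B,N$ is quasi-left-continuous, so this martingale has no fixed-time jumps and is $L^2$-continuous in $t$. The second term I bound through the mild formulation: the increment over $[s,s']$ is small uniformly, using the BDG, factorization and Poisson maximal estimates from the proof of \thmref{thm:sup-norm}. Each of these yields a factor that vanishes as $s'-s\to0$, namely $(s'-s)^{1-2\vartheta}$ or $(s'-s)$. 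The remaining term $e^{(s'-s)A}y_s-y_s$ is handled by dominated convergence, using strong continuity of the semigroup together with the $L^2$-continuity of $s\mapsto y_s$, so that it tends to zero in $L^2$ uniformly over the relevant family. The contributions from $[s,s']$ are controlled by $\int_s^{s'}|\CJ_t||\CY_t|\,dt$, using the moment bound \eqref{sup-L2}.

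I expect the main obstacle to be the case $s\uparrow s'$ (left continuity), where both the initial time and the initial datum move. I would handle it with the Lipschitz estimate combined with the uniform smallness of $\CY^{s,z}_{s'}-e^{(s'-s)A}z$ over $z$ in a bounded set of $L^2$. Strong continuity of the semigroup applied to the convergent family $y_s\to y_{s'}$ then closes the argument. Combining both sides gives continuity of $s\mapsto\wdh\EE\qv{\wdt Z_s,y}$ for every $y$, which is the claimed weak continuity.
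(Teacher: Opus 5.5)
Your proposal is correct and follows essentially the same route as the paper. Both arguments test against $y_s=\wdh\EE(y|\CF_s)$, apply the duality \eqref{dual-Pm} with all forcing terms set to zero, and reduce the claim to $L^2$-continuity of $s\mapsto \CY_t^{s,y_s}$ via the flow property and the Lipschitz estimate, controlling the mild-form increment $\CY^{s,y_s}_{s'}-y_s$ — and you are in fact somewhat more careful than the paper in handling the $\int_s^{s'}\qv{\CJ_t,\CY_t}\,dt$ piece, justifying the $L^2$-continuity of $y_s$, and spelling out the left-continuity case it dismisses as similar.
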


\begin{proof}
By \eqref{dual-Pm}, to examine the weakly continuity of $\wdt Z_s$, it is sufficient to consider the linear FSPDE  \eqref{CY} with $\Ga^\dag=\chi=\CR^\dag=\CR^2=\CR^3=0$, that is,
\beqa{CY-0}
d\CY_t = \big[A\CY_t + \CQ_t^1 \CY_t \big]dt + \sum_{i=1}^\infty K_i(t)\CY_t d\beta_t^i + \CQ_t^{2,j}\CY_t dB_t^j +\int_{\Xi} \CQ_t^3(\al) \CY_t \wdt N(d\al,dt)
\eeqa
with initial condition
$\CY_s=y$.
For any $y \in L^2(\Omega,\CF_s,\wdh \QQ,H)$, we denote by $\CY_t^{s,y}$ the solution of \eqref{CY-0} to emphasize its dependence on the initial datum $(s,y)$. \thmref{thm:sup-norm} implies that  \eqref{CY-0} admits a unique mild solution
\beqa{mild-CY-0}
\CY_t^{s,y}\ad = e^{(t-s)A} y+ \int_s^t e^{(t-\tau)A} \CQ_s^1 \CY_\tau^{s,y} d\tau + \sum_{i=1}^\infty \int_s^t e^{(t-\tau)A} K_i(\tau) \CY_\tau^{s,y}  d\beta_\tau^i \\
\aad + \int_s^t e^{(t-\tau)A} \CQ_\tau^{2,j} \CY_\tau^{s,y}  dB_\tau^j +  \int_{\Xi_s^t} e^{(t-\tau)A} Q_\tau^3(\al)\CY_\tau^{s,y} \wdt N(d\al,d\tau), \quad \CY_s^{s,x}=y,
\eeqa
in $\HH_2([s,T])\cap \CH_2([s,T])$ such that $\sup_{t\in [s,T]}\wdh \EE|\CY_t^{s,y}|^2 \leq C_T(1+|y|_H)$ and
\beq{CY-lip-init}
\sup_{t\in [s,T]}\wdh \EE|\CY_t^{s,y}-\CY_t^{s,y'}|^2 \leq C_T\, \wdh \EE|y-y'|, \quad \forall\, y' \in L^2(\Omega,\CF_s,\wdh \QQ,H).
\eeq
For fixed $y\in L^2(\Omega,\CF_T,\wdh\QQ,H)$, let $y_s:=\wdh \EE(y|\CF_s)$. Taking conditional expectation to \eqref{dual-Pm} gives $
\wdh \EE\qv{\wdt Z_s, y}= \wdh \EE\qv{\wdt Z_s, y_s}= \wdh \EE\qv{\eta,\CY_T^{s,x_s}} + \wdh \EE \int_s^T \qv{\CJ_t, \CY_t^{s,y_s}} dt$.

To prove the weak continuity of $\wdt Z$, it is sufficient to prove for all $t\geq s$, the map $s \mapsto \CY_t^{s,y_s}$ is continuous in the norm of $L^2(\Omega,
\CF,\wdh \QQ,H)$. For any $\sg>s$, note that \bea
\wdh \EE|\CY_t^{\sg,y_{\sg}}-\CY_t^{s,y_s}|^2 \leq \wdh \EE|\CY_t^{\sg,y_{\sg}}-\CY_t^{\sg,y_s}|^2 + \wdh \EE|\CY_t^{\sg,y_s}- \CY_t^{s,y_s}|^2 =:\YY_1 +\YY_2.
\eea
The term $\YY_1$ above is controlled by \eqref{CY-lip-init} and goes to zero since $\wdh \EE|y_s-y_{\sg}|^2 =\wdh \EE|\wdh \EE(y|\CF_{\sg})-\wdh \EE(y|\CF_s)|^2$ goes to $0$ as $\sg\to s$. For the term $\YY_2$, we note that $\EE|\CY_t^{\sg,y_s}-\CY_t^{s,y_s}|^2=\EE|\CY_t^{\sg,y_s}-\CY_t^{\sg,\CY_{\sg}^{s,y_s}}|^2 \leq \EE|\CY_{\sg}^{s,y_s}-y_s|^2$. From \eqref{mild-CY-0}, following a similar argument to that in \propref{prop:mom-2} implies
\bea\ad
\wdh \EE|\CY_{\sg}^{s,y_s}-y_s|^2 \\
\aad \leq 32\, \wdh \EE|e^{(\sg-s)A}y-y_s|_H^2 + 32\, \wdh \EE \Big|\int_s^t e^{(t-\tau)A} \CQ_\tau^1 \CY_\tau^{s,y_s} d\tau \Big|^2 \\
\aad\quad + 32\, \wdh \EE\Big|\sum_{i=1}^\infty \int_s^{\sg} e^{(\sg-\tau)A}  K_i(\tau)\CY_\tau^{s,y_s} d\beta_\tau^i \Big|^2 +\wdh \EE \Big|\int_s^{\sg} e^{(\sg-\tau)A} \CQ_\tau^{2,j} \CY_\tau^{s,y_s} dB_\tau^j\Big|^2 \\
\aad\quad + 32\, \wdh \EE \Big|\int_s^{\sg} \int_{\Xi} e^{(\sg-\tau)A} Q_{\tau}^3(\al) \CY_\tau^{s,y_s} \wdt N(d\al,d\tau) \Big|^2 \\
\ad \leq 32\, \wdh \EE|e^{(\sg-s)A} y_s - y_s|^2 + C \sup_{\tau\in [s,T]}\wdh \EE|\CY_\tau^{s,y_s}|^2 \int_s^{\sg} (\sg-\tau)^{-2\vartheta} d\tau \to 0
\eea
as $\sg\downarrow s$. For $\sg\uparrow s$, the calculation is similar. Therefore, the proof is complete.
\end{proof}

We are in
a
%the
position to prove that $\wdt Z$ and $Z$ coincide.
\begin{prop}\label{prop:wdt-Z}
$\wdt Z$ is a progressively measurable process and $\wdt Z = Z, d\wdh\QQ\otimes dt$-a.s.
\end{prop}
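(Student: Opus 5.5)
The plan follows the scheme of \cite[Proposition~4.10]{FHT18}, adapted to the jump setting. It has two parts: first show that $\wdt Z$ admits a progressively measurable version, then identify it with the weak limit $Z$ of \corref{cor:conv-Pm} by testing against suitable processes.

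\textbf{Step 1 (measurability).} By \propref{prop:weak-cont}, the map $s\mapsto \wdt Z_s$ is weakly continuous into $L^2(\Omega,\CF_T,\wdh\QQ;H)$. In addition, each $\wdt Z_s$ is $\CF_s$-measurable, because it is a weak limit of the $\CF_s$-measurable $Z^n_s$ in $L^2(\Omega,\CF_s,\wdh\QQ;H)$.

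\begin{itemize}
\item Weak continuity gives weak measurability of $s\mapsto\wdt Z_s$. Pettis' theorem then gives strong measurability in the separable Hilbert space, so $\wdt Z$ is a measurable $\CF_t$-adapted element of $L^2(\Omega\times[0,T];H)$.
\item The norm $\sup_s\wdh\EE|\wdt Z_s|^2$ is finite by uniform boundedness, since the $Z^n_s$ are bounded uniformly in $n,s$, as follows from the proof of \corref{cor:conv-Pm}.
\item To pass from adapted to progressively measurable, take the dyadic piecewise-constant approximations $\wdt Z^{(k)}_t:=\wdt Z_{\lceil 2^k t\rceil 2^{-k}}$. Alternatively, and more simply, take the right-continuous averages $\wdt Z^{(k)}_t:=k\int_{(t-1/k)\vee 0}^t \wdt Z_r\,dr$, which are adapted and continuous, hence progressive.
\item These averages converge weakly in $L^2_\CP(\Omega\times[0,T];H)$ to $\wdt Z$ by weak continuity. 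The closed subspace of progressive processes is weakly closed, so it contains the limit. This gives a progressively measurable version.
\end{itemize}

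\textbf{Step 2 (identification).} Fix $\xi\in L^2_\CP(\Omega\times[0,T];H)$, which we may take bounded. Compute $\wdh\EE\int_0^T\qv{Z_t,\xi_t}dt$ in two ways.

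\begin{itemize}
\item By part (1) of \corref{cor:conv-Pm}, it equals $\lim_n\wdh\EE\int_0^T\qv{Z^n_t,\xi_t}dt$.
\item For each fixed $t$, part (2) of \corref{cor:conv-Pm} gives $\wdh\EE\qv{Z^n_t,\xi_t}\to\wdh\EE\qv{\wdt Z_t,\xi_t}$, since $\xi_t\in L^2(\Omega,\CF_t;H)$.
\item The functions $t\mapsto \wdh\EE\qv{Z^n_t,\xi_t}$ are dominated by $\sup_{n,t}(\wdh\EE|Z^n_t|^2)^{1/2}(\wdh\EE|\xi_t|^2)^{1/2}$, which is integrable. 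Dominated convergence in $t$ then yields $\wdh\EE\int_0^T\qv{Z_t,\xi_t}dt=\wdh\EE\int_0^T\qv{\wdt Z_t,\xi_t}dt$.
\item Since $\xi$ is arbitrary and both processes lie in $L^2_\CP$, we conclude $Z=\wdt Z$ $d\wdh\QQ\otimes dt$-a.s.
\end{itemize}

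\textbf{Main obstacle.} The delicate point is the uniform bound $\sup_{n}\sup_{t\in[0,T]}\wdh\EE|Z^n_t|^2<\infty$ needed for dominated convergence; the weak convergence in (1) only supplies an $L^2(\Omega\times[0,T])$ bound. I would obtain it from the duality \eqref{dual-n} with $\CR^\dag=\Ga^\dag=\chi=\CR^2=\CR^3=0$ and initial time $t$:
\[
\wdh\EE\qv{Z^n_t,y}=\wdh\EE\qv{\eta,\CY^{n}_T}+\wdh\EE\int_t^T\qv{\CJ_r,\CY^{n}_r}dr .
\]
The estimate \eqref{sup-L2} applies uniformly in $n$, because the truncated noise $\sum_{i\le n}K_i$ obeys the same bound as in \remref{rem:est-Ki}. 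Together with Cauchy–Schwarz this gives $|\wdh\EE\qv{Z^n_t,y}|\le C(|\eta|_{L^2}+|\CJ|_{L^2})(\wdh\EE|y|^2)^{1/2}$ with $C$ independent of $n$ and $t$, and taking the supremum over $y$ gives the required bound.
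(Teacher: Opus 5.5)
Your proposal is correct and follows essentially the same route as the paper: measurability of $\wdt Z$ from the weak continuity in \propref{prop:weak-cont}, then identification with $Z$ by testing against bounded progressively measurable processes and passing to the limit in $t$ by dominated convergence. Your duality-based bound $\sup_n\sup_{t}\wdh\EE|Z^n_t|^2<\infty$, via \eqref{dual-n} and \eqref{sup-L2} and uniform in $n$ by \remref{rem:est-Ki}, supplies the domination that the paper invokes without justification. The only slip is that the dyadic approximation $\wdt Z_{\lceil 2^k t\rceil 2^{-k}}$ looks into the future and is not adapted (use $\lfloor\cdot\rfloor$ instead), but your averaged alternative already repairs this.
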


\begin{proof}
For the progressive measurability, fix any $t\in[0,T]$, we choose a basis $\{\upsilon_l\}_{l=1}^\infty$ of $L^2(\Omega,\CF,\wdh\QQ,H)$. Therefore, $\wdt Z_s=\sum_{l=1}^\infty \wdh \EE\qv{\wdt Z_s, \upsilon_l}_H \upsilon_l$ for all $s\leq t$. By \propref{prop:weak-cont}, $\wdh \EE\qv{\wdt Z_s,\upsilon_l}$ is continuous function in time, thus $\wdt Z$ restricted to $[0,t]$ is $\CB([0,t])\otimes \CF_t$-measurable. To prove $\wdt Z$ and $Z$ coincide, we choose $y=\chi=\Ga^\dag=0$ and any arbitrary bounded progressively measurable process $\CR^\dag$ in \eqref{CY}. By definition of $\wdt Z$, for all $t\in [0,T]$, we have $\wdh \EE\qv{Z_t^n, \CR_t^\dag}_H \to \wdh \EE\qv{\wdt Z_t, \CR_t^\dag}_H$. Moreover, the dominated convergence theorem and the measurability of $\wdt Z$ then implies $\int_0^T \wdh \EE\qv{Z_t^n, \CR_t^\dag}_H dt \to \int_0^T \wdh \EE\qv{\wdt Z_t, \CR_t^\dag}_H dt$. However, by \corref{cor:conv-Pm}, we know that $\int_0^T \wdh \EE\qv{Z_t^n, \CR_t^\dag}_H dt \to \int_0^T \wdh \EE \qv{Z_t,\CR_t^\dag}_H dt$. Thus, $\wdt Z=Z$, $d\wdh\QQ\otimes dt $-a.s.
\end{proof}

To prove the existence of a solution of the singular BSPDE \eqref{CZ}, we pass to the limit $n\to \infty$ in \eqref{mild-Zn}. For $m\in \NN$, define $\mathbb{K}^m(s):=\sum_{i=1}^m (T-s)^{\vartheta} K_i^*(s) Q_{1,s} e_i$.

\begin{lem}\label{lem:Q1-m} We have $(i)$ $\mathbb{K}^m$ converges weakly in $L_\CP^2(\Omega\times [0,T];H)$ with the limit denoted by $\sum_{i=1}^\infty (T-\cdot)^\vartheta K_i^*(\cdot) Q_{1,\cdot} e_i$. $(ii)$ The term $(T-\cdot)^\vartheta \sum_{i=1}^m K_i^*(\cdot) Q_{1,\cdot}^m e_i$ converges weakly to $(T-\cdot)^\vartheta \sum_{i=1}^\infty K_i^*(\cdot) Q_{1,\cdot} e_i$ in the space $L_\CP^2(\Omega\times [0,T];H)$.
\end{lem}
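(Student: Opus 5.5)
The plan is to identify the weak limits through the duality relations of \propref{prop:ext-Pn} and \corref{cor:conv-Pm}, tested against the forward equation \eqref{CY-m} with $\chi\neq 0$. Both parts need a uniform bound in $L_\CP^2(\Omega\times[0,T];H)$ and convergence against a dense class of test processes.

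\emph{Test processes.} Take $\chi$ bounded and progressively measurable, and set
\[
\chi^{T}_t := (T-t)^{\vartheta}\chi_t .
\]
Choose $y=0$, $\Ga^\dag=\CR^\dag=\CR^{2,j}=\CR^3=0$ in \eqref{CY-m}. Then \eqref{dual-Pm} gives
\[
\wdh\EE\int_0^T\qv{\mathbb{K}^m(t),\chi_t}\,dt
=\wdh\EE\int_0^T\sum_{i=1}^m\qv{Q_{1,t}e_i,K_i(t)\chi^T_t}\,dt
=\wdh\EE\qv{\eta,\CY_T^{m,\infty}}+\wdh\EE\int_0^T\qv{\CJ_t,\CY_t^{m,\infty}}\,dt ,
\]
where $\CY^{m,\infty}$ is driven by $\chi^T$. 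By \thmref{thm:sup-norm}, $\CY^{m,\infty}\to\CY$ in $\HH_2$. Hence the right-hand side converges, which gives weak convergence of $\mathbb{K}^m$ on the dense class of bounded progressive $\chi$.

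\emph{Uniform bound.} To upgrade to weak convergence in $L^2_\CP$, I need $\sup_m|\mathbb{K}^m|_{L^2}<\infty$. For this I use \eqref{CY-L2} in the form of \propref{prop:mom-2}, with $\chi$ replaced by $\chi^T$ and truncated to $m$ terms. It gives
\[
\wdh\EE|\CY_t^{m,\infty}|^2\le C\int_0^t(t-\tau)^{-2\vartheta}(T-\tau)^{2\vartheta}\,\wdh\EE|\chi_\tau|^2\,d\tau .
\]
At $t=T$ this is bounded by $C|\chi|^2_{L^2_\CP}$, uniformly in $m$. For the $\CJ$ term, integrating in $t$ and applying Fubini together with $\int_\tau^T(t-\tau)^{-2\vartheta}dt\le C$ shows that $\wdh\EE\int_0^T|\CY^{m,\infty}_t|^2dt\le C|\chi|^2_{L^2_\CP}$.

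The $(T-t)^\vartheta$ weight is exactly what makes the terminal term $\wdh\EE\qv{\eta,\CY_T^{m,\infty}}$ bounded by $C|\eta|\,|\chi|_{L^2_\CP}$. This step is the one I expect to be delicate: the constants must be uniform in $m$, since the truncated sum $\sum_{i\le m}|e^{tA}K_i\chi|^2$ is dominated by the full sum of \remref{rem:est-Ki}. A subtlety is that \eqref{dual-Pm} is stated for bounded $\chi$, so the bound is first obtained on bounded $\chi$ and then extended by density.

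Combining the uniform bound with density, $\mathbb{K}^m$ is a bounded linear functional family converging on a dense set, which gives (i). The limit is defined as that weak limit.

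\emph{Part (ii).} I apply the same argument to \eqref{dual-n} with $n=m$, since the sum there runs to $m$ with $Q_1^m$, and compare with $\CY^{m,m}$. The same estimate yields $\sup_m$ boundedness. The right-hand side $\wdh\EE\qv{\eta,\CY_T^{m,m}}+\wdh\EE\int\qv{\CJ,\CY^{m,m}}$ converges to the expression for $\CY$, because $\CY^{m,m}\to\CY$ by \thmref{thm:sup-norm}. The left-hand side of (i), in the limit, is equal to that same expression, so the two limits coincide, proving (ii).
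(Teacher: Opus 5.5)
Your proposal is correct and follows the paper's proof essentially step for step. You test \eqref{dual-Pm} (for (i)) and \eqref{dual-n} with $n=m$ (for (ii)) against $(T-\cdot)^\vartheta\chi$ for bounded $\chi$, derive the $m$-uniform bound $\big|\wdh\EE\int_0^T\qv{\mathbb{K}^m(t),\chi_t}\,dt\big|\le C|\chi|_{L_\CP^2(\Omega\times[0,T];H)}$ from \eqref{CY-L2}, obtain convergence of the right-hand sides from $\CY^{m,\infty}\to\CY$ and $\CY^{m,m}\to\CY$ in \thmref{thm:sup-norm}, and conclude by density of bounded processes. You also make explicit several details the paper leaves implicit or glosses over: the additive constant in \eqref{CY-L2} must be dropped (legitimate here since $y=0$ and the only forcing is $\chi$), the truncated sums are controlled uniformly in $m$ via \remref{rem:est-Ki}, and the coefficients $\CQ^1,\CQ^{2,j},\CQ^3$ stay general rather than being set to zero as the paper does.
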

\begin{proof}
In light of
\cite[Lemma 4.11, 4.12]{FHT18}, our main effort is  devoted to the jump system. In \eqref{CY} and \eqref{CY-m}, we set $s=y=\Ga^\dag=\CR^\dag=\CQ^1=\CQ^k=\CR^k=0$ for $k=2,3$, take $n=\infty$, replace $\chi$ with $(T-\cdot)^\vartheta \chi$, and denote the corresponding mild solutions by $\CY^{\chi}$ and $\CY^{m,\chi}$, respectively. By the definition of $\mathbb{K}^m$ and the duality 
in \eqref{dual-Pm}, it follows that
\beqa{CV-chi}\disp\!\!
\wdh \EE\int_0^T\!\! \bqv{\mathbb{K}^m(t),\chi_t}dt \ad =\wdh \EE\int_0^T\!\! \Bqv{\sum_{i=1}^m (T-t)^\vartheta K_i^*(t)Q_{1,t}e_i, \chi_t}dt \\
\aad 
= \wdh \EE \bqv{\eta, \CY_T^{m,\chi}}+ \EE\int_0^T\!\! \bqv{\CJ_t,\CY_t^{m,\chi}}_H dt.
\eeqa
The estimate \eqref{CY-L2} yields $|\wdh \EE\int_0^T \qv{\mathbb{K}^m(t),\chi_t}_H dt|\leq C|\chi|_{L_\CP^2(\Omega\times[0,T];H)}$. Since bounded elements are dense in $L_\CP^2(\Omega\times [0,T];H)$, this inequality implies the sequence of $\{\KK^m\}$ is uniformly bounded in $L_\CP^2(\Omega\times[0,T];H)$. Furthermore,  \thmref{thm:sup-norm} ensures that the right-hand side of \eqref{CV-chi} converges as $m\to \infty$ for any bounded $\chi$. Consequently, $\mathbb{K}^m$ converges weakly in $L_\CP^2(\Omega\times[0,T];H)$.

To prove statement (ii),
%let
denote by $\wdt \CY_t^{m,\chi}$ and $\wdt \CY_t^{\chi}$
%denote
the solutions to Eq. \eqref{CY} and \eqref{CY-m}, respectively, under the setting that $s=y=\Ga^\dag=\CR^\dag=\CQ^1=\CQ^k=\CR^k=0$ for $k=2,3$, with $n=m$, and with $\chi$ replaced by $(T-\cdot)^\vartheta \chi$.
%The
% Statement (ii)
The conclusion
then follows by an argument analogous to %the proof of the
that used in part (i), with \eqref{CV-chi} replaced by
\bea
\wdh \EE\int_0^T \qv{(T-t)^{\vartheta} \sum_{i=1}^m K_i^*(t)Q_{1,t}^m e_i, \chi_t} dt =\wdh \EE\qv{\eta,\wdt \CY_T^{m,\chi}}+ \wdh \EE\int_0^T \qv{\wdt \CY_t^{m,\chi},\CJ_t}dt.
\eea
In view of the convergence $\wdh \EE\sup_{t\in [0,T]}|\wdt \CY_t^{m,\chi}-\wdt \CY_t^\chi|^2 \to 0$ established in \thmref{thm:sup-norm}, we conclude that the term $(T-\cdot)^{\vartheta}\sum_{i=1}^m K_i^*(\cdot)Q_{1,\cdot}^m e_i$ converges with the limit denoted by $(T-\cdot)^{\vartheta}\sum_{i=1}^\infty K_i^*\cd Q_1 e_i$. This completes the proof.
\end{proof}

The following lemma establishes the existence and uniqueness of solutions to linear BSPDEs with jumps and unbounded forcing terms. This result serves as a technical tool for proving the uniqueness of the singular BSPDE with jumps \eqref{CZ}.
% Since the proof can be adapted directly from \cite[Lemma 4.13]{FHT18} to the jump setting by using results in \cite{MPR10} to deal with stochastic convolutions driven by PRMs, we omit the details.

\begin{lem}\label{lem:bar-P}
Assume $\xi$ is a progressively measurable progress in $H$ with $\wdh \EE\int_0^T (T-t)^{2\vartheta} |\xi_t|^2 dt<\infty$. For any $n\in \NN$ and any $\eta\in L^2(\Omega,\CF_T,\wdh \QQ;H)$, there exists a unique quadruple $(\CO,\CZ_1,\CZ_2^j, \CZ_3\cd)$ with $\CO$ is progressively measurable in $H$ with \cadlag path such that $\CO \in \HH_2(T)$, $\CZ_1\in L_\CP^2(\Omega \times[0,T];\CL_2(H))$, $\CZ_2^j\in L_\CP^2(\Omega\times[0,T]; H)$, and $\CZ_3\in \FF_\CP^2([0,T];H)$ such that
\beqa{CO-mild}\!\!
\CO_s \ad\!\!\!\! = e^{(T-s)A^*} \eta +\!\! \int_s^T\!\!\! e^{(t-s)A^*} \CV_t \CO_t dt + \!\! \int_s^T\!\!\! e^{(t-s)A^*} \sum_{i=1}^n K_i^*(t) \CZ_{1,t} e_i dt   \\
\aad \!\!\!\!+ \int_s^T e^{(t-s)A^*} \xi_t dt +\int_s^T e^{(t-s)A^*} \CE_t^j \CZ_{2,t}^j dt-\sum_{i=1}^\infty\! \int_s^T\!\! e^{(t-s)A^*} \CZ_{1,t} e_i d\beta_t^i  \\
\aad \!\!\!\!-\! \int_s^T\!\! e^{(t-s)A^*} \CZ_{2,t}^jdB_t^j - \int_{\Xi_s^T}\!\! e^{(t-s)A^*} \CZ_{3,t}(\al) \wdt N(d\al,dt).
\eeqa
Moreover, let $\wdt \CY^n$ be the solution of \eqref{CY-m} with $\chi=\CR^\dag=0, m=\infty$ and $\Ga^\dag \in L_\CP^\infty(\Omega\times[0,T]; \CL_2(H))$, we have
\beqa{dual-CO}\ad
\wdh \EE\bqv{\CO_s,y}+ \wdh \EE\int_s^T \Big[\bqv{\CZ_{1,t}, \Ga_t^\dag}_{\CL_2(H)} +\bqv{\CZ_{2,t}^j, \CR_{t}^{2,t}} + \int_{\Xi} \bqv{\CZ_{3,t}(\al),\CR_t^3(\al)} \pi(d\al)\Big] dt \\
\aad = \wdh \EE\bqv{\eta, \wdt \CY_T^n}+\wdh \EE\int_s^T \bqv{(T-t)^{\vartheta}\xi_t, (T-t)^{-\vartheta} \wdt \CY_t^n}dt.
\eeqa
\end{lem}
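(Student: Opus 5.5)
The plan is to treat \eqref{CO-mild} as a truncated, nonsingular BSEE with jumps, in the same spirit as \propref{prop:ext-Pn}. Since only finitely many $K_i^*$ appear, the drift is Lipschitz in $\CZ_1$. The only new feature is the forcing $\xi$, which need not lie in $L^2_\CP(\Omega\times[0,T];H)$, because only $(T-t)^{\vartheta}\xi_t$ is square integrable. I would therefore first approximate $\xi$ by the truncations $\xi^k:=\xi\,\indi_{[0,T-1/k]}$. These belong to $L^2_\CP(\Omega\times[0,T];H)$, and $(T-\cdot)^\vartheta\xi^k\to(T-\cdot)^\vartheta\xi$ in $L^2_\CP$. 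For each $k$, \propref{prop:ext-Pn} (with $\CJ=\xi^k$ and $n$ fixed) gives a unique mild solution $(\CO^k,\CZ_1^k,\CZ_2^{j,k},\CZ_3^k\cd)$ of \eqref{CO-mild}. The duality \eqref{dual-n} with $\chi=\CR^\dag=0$ yields \eqref{dual-CO} for $\xi^k$. The \cadlag property and the $\HH_2(T)$ bound for $\CO^k$ follow from the mild form, the BDG inequality and the maximal inequality for PRM stochastic convolutions, exactly as in \thmref{thm:sup-norm}.

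The key step is an a priori estimate that is uniform in $k$ and controlled by $\wdh\EE|\eta|^2+\wdh\EE\int_0^T(T-t)^{2\vartheta}|\xi_t|^2dt$. I would obtain it by duality rather than by direct It\^o-type energy estimates. The weight $(T-t)^{-\vartheta}$ cannot be absorbed directly, but on the forward side \eqref{CY-L2} gives $\wdh\EE|\wdt\CY^n_t|^2\le C\int_s^t(t-\tau)^{-2\vartheta}|\Ga^\dag_\tau|_{\CL(H)}^2d\tau+\dots$, and $\int_s^T (T-t)^{-2\vartheta}\wdh\EE|\wdt\CY_t^n|^2dt$ is finite because $2\vartheta<1$. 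Hence the right side of \eqref{dual-CO} is bounded by $C(|\eta|_{L^2}+|(T-\cdot)^\vartheta\xi|_{L^2_\CP})$ times $(|y|_{L^2}+|\Ga^\dag|+|\CR^2|+|\CR^3|)$. Taking suprema over test data $y,\Ga^\dag,\CR^{2,j},\CR^3$ bounds $\sup_s\wdh\EE|\CO^k_s|^2$ and the $L^2$ norms of $(\CZ_1^k,\CZ_2^{j,k},\CZ_3^k)$. One caveat: $\Ga^\dag$ enters only through $|\Ga^\dag|_{\CL(H)}$, and bounded elements are dense, so this gives the $\CL_2(H)$-dual norm. Applying the same argument to differences $\xi^k-\xi^{k'}$ shows that the sequence is Cauchy, by linearity. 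The limit then satisfies \eqref{CO-mild}: the deterministic convolution $\int_s^Te^{(t-s)A^*}\xi_tdt$ converges because $\int_s^T|\xi_t|dt\le(\int(T-t)^{-2\vartheta})^{1/2}(\int(T-t)^{2\vartheta}|\xi_t|^2)^{1/2}$, and the stochastic convolutions converge by isometry. The duality \eqref{dual-CO} passes to the limit as well.

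To upgrade to $\CO\in\HH_2(T)$ with \cadlag paths, I would use the mild form together with the estimate above. The representation $\CO_s=\wdh\EE[\,\cdot\,|\CF_s]$ of the forward terms, combined with Doob's inequality, handles the $\xi$ term, since $\sup_s|\int_s^Te^{(t-s)A^*}\xi_tdt|\le\int_0^T|\xi_t|dt\in L^2$. Uniqueness follows from linearity: the difference of two solutions solves \eqref{CO-mild} with $\eta=\xi=0$, and \eqref{dual-CO} then forces it to vanish tested against every $y$, $\Ga^\dag$, $\CR^2$, $\CR^3$. Equivalently, one can apply \propref{prop:ext-Pn} uniqueness directly, since the difference has zero, hence $L^2$, forcing.

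I expect the main obstacle to be the uniform estimate, specifically controlling the $\CZ_1$ component in the $\CL_2(H)$ norm when only the weighted forcing is available. If the duality route through \eqref{CY-L2} proves insufficient, the fallback is to apply the It\^o formula to $|\CO^k_t|^2$ (after Yosida approximation of $A$) with the splitting $\langle\CO,\xi\rangle\le (T-t)^{-2\vartheta}|\CO|^2/\epsilon+\epsilon(T-t)^{2\vartheta}|\xi|^2$, followed by the singular Gronwall lemma of \cite{Hen81}.
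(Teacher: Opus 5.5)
Your argument is correct, but it follows a different route from the paper. The paper only indicates that the solution is built directly from the martingale representation theorem for $W$, $B^j$ and $\wdt N$, in the spirit of \cite[Lemma 4.3]{FHT18} and Hu--Peng. In that approach the weighted hypothesis on $\xi$ is used only through $\int_0^T|\xi_t|\,dt\in L^2(\Omega)$, which follows from Cauchy--Schwarz and $2\vartheta<1$. So \eqref{CO-mild} is a standard BSEE with forcing in $L^2(\Omega;L^1(0,T;H))$ plus finitely many Lipschitz terms. The representation $\CO_s=\wdh\EE[e^{(T-s)A^*}\eta+\int_s^Te^{(t-s)A^*}f_t\,dt\,|\,\CF_s]$, with $f$ the full drift, then gives the $\HH_2(T)$ bound and \cadlag paths via Doob, and \eqref{dual-CO} is checked separately. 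You instead truncate to $\xi^k=\xi\indi_{[0,T-1/k]}\in L^2_\CP$, reuse \propref{prop:ext-Pn}, and obtain $k$-uniform bounds by testing \eqref{dual-n} against the forward equation. Cauchy-ness then follows by linearity, and \eqref{dual-CO} is inherited as the limit of \eqref{dual-n}. Your route gives the duality for free and needs no new construction. The price is that it depends on the forward estimates of \propref{prop:mom-2}, hence on \assmref{ass1}, and on \eqref{dual-n}, whose proof the paper also omits. The paper's route is self-contained on the backward side and needs only $\int_0^T|\xi_t|\,dt\in L^2(\Omega)$.

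Three details need tightening.

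First, for the $\CZ_1$ bound use \eqref{sup-L2}, not \eqref{CY-L2}. With $y=\CR^2=\CR^3=0$, and after dropping the inessential ``$1+$'' by linearity, \eqref{sup-L2} gives $\sup_t\wdh\EE|\wdt\CY^n_t|^2\le C\,\wdh\EE\int_0^T|\Ga^\dag_t|^2_{\CL_2(H)}dt$. This is exactly the unweighted $L^2_\CP(\Omega\times[0,T];\CL_2(H))$ dual norm. By contrast, \eqref{CY-L2} only yields a $(T-t)^{2\vartheta}$-weighted trace-norm bound, which is too weak to pass to the limit in $\int_s^Te^{(t-s)A^*}\CZ^k_{1,t}\,dW_t$ near $t=T$. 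With \eqref{sup-L2} the obstacle you anticipate disappears, and the It\^o fallback is unnecessary.

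Second, your first uniqueness argument is circular, because \eqref{dual-CO} has only been established for the solution you constructed. The second argument, uniqueness in \propref{prop:ext-Pn} with $\eta=0$ and $\CJ=0$, is the right one.

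Third, the \cadlag property of $\CO^k$ does not follow ``exactly as in \thmref{thm:sup-norm}''. The term $\int_s^Te^{(t-s)A^*}\CZ^k_{1,t}\,dW_t$ is a backward convolution, not a forward stochastic convolution in $s$. Use the conditional-expectation representation with Doob's inequality instead, as you already do for the limit. Writing $f^k$ and $f$ for the full drift integrands of the $k$-th and limiting solutions, this also gives $\wdh\EE\sup_s|\CO^k_s-\CO_s|^2\le 4\,\wdh\EE\bigl(\int_0^T|f^k_t-f_t|\,dt\bigr)^2\to0$. Hence the limit inherits \cadlag paths.
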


\begin{proof}
The key idea is to apply the martingale representation theorem with respect to $W,B^j$, and $\wdt N$.
This is similar in spirit to \cite[Lemma 4.3]{FHT18} and \cite{HP90},
so
the
verbatim proof is omitted for brevity.
\end{proof}

\begin{thm}\label{thm:ext-P}
The quadruple $(Z,Q_1,Q_2^j,Q_3\cd)$ constructed in \corref{cor:conv-Pm} is the unique mild solution to the singular BSPDE with jumps \eqref{CZ}.
\end{thm}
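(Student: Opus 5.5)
The argument splits into existence (passing to the limit in the mild formulation \eqref{mild-Zn}) and uniqueness (a duality argument built on \lemref{lem:bar-P}), following the scheme of \cite[Theorem 4.14]{FHT18}, with the Poisson terms handled through the $L^2$-isometry for compensated Poisson integrals.

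\emph{Existence.} Condition (i) of \defref{def:mild-P} is exactly \lemref{lem:Q1-m}(i). For the mild identity, fix $s$ and a test variable $\zeta\in L^2(\Omega,\CF_T,\wdh\QQ;H)$, and pair \eqref{mild-Zn} with $\zeta$ in $\wdh\EE$.
\begin{itemize}
\item The left side converges to $\wdh\EE\qv{\wdt Z_s,\zeta}$ by \corref{cor:conv-Pm}(2), and $\wdt Z=Z$ by \propref{prop:wdt-Z}.
\item The terms $e^{(T-s)A^*}\eta$ and the $\CJ$-integral do not depend on $n$.
\item The drift terms $\CV Z^n$ and $\CE^jQ_2^{j,n}$ define bounded linear maps of $(Z^n,Q_2^{j,n})$ into $L^2(\Omega;H)$, hence are weakly continuous, and their limits follow from \corref{cor:conv-Pm}(1).
\item For the singular term, write $e^{(t-s)A^*}\sum_{i\le n}K_i^*Q^n_{1,t}e_i=e^{(t-s)A^*}(T-t)^{-\vartheta}\cdot(T-t)^{\vartheta}\sum_{i\le n}K_i^*Q^n_{1,t}e_i$. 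The second factor converges weakly in $L^2_\CP$ by \lemref{lem:Q1-m}(ii), since $Q_1^n$ with $n$ terms is the diagonal sequence. It then suffices to check that $\xi\mapsto\int_s^T e^{(t-s)A^*}(T-t)^{-\vartheta}\xi_t\,dt$ is bounded from $L^2_\CP$ to $L^2(\Omega;H)$. This holds because $\int_s^T (T-t)^{-2\vartheta}dt<\infty$ for $\vartheta<1/2$.
\item The stochastic integrals against $\beta^i$, $B^j$ and $\wdt N$ are bounded linear maps from $L^2_\CP(\CL_2(H))$, $L^2_\CP(H)$ and $\FF^2_\CP$ into $L^2(\Omega;H)$, by the It\^o isometry (including the Poisson isometry), so they also pass to the weak limit.
\end{itemize}
Since $\zeta$ is arbitrary, the mild identity holds $\wdh\QQ$-a.s. for each $s$. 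Finally, the trace-class claim of \propref{prop:exist} is obtained from the estimate \eqref{CY-L2} by testing against $\Ga^\dag$ in $\CL(H)$ norm. The bound $|\wdh\EE\int\qv{Q_1,\Ga^\dag}|\le C(\wdh\EE\int(T-t)^{-2\vartheta}|\Ga^\dag|^2_{\CL(H)})^{1/2}$ gives, by $\CL_1$–$\CL$ duality, $\wdh\EE\int(T-t)^{2\vartheta}|Q_1|_{\CL_1}^2<\infty$. The weight here should be checked against the stated form.

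\emph{Uniqueness.} By linearity, take the difference of two mild solutions. It solves \eqref{CZ} with $\eta=0$ and $\CJ=0$, and its singular term $\xi:=\sum_i K_i^*Q_1e_i$ is defined through the weak limit, so $\wdh\EE\int(T-t)^{2\vartheta}|\xi_t|^2dt<\infty$ by (i). The approach is then:
\begin{enumerate}
\item Rewrite the difference as a solution of \eqref{CO-mild} with $n=0$ and forcing $\xi$ (truncated appropriately). \lemref{lem:bar-P} gives uniqueness in that class, so it identifies $(Z,Q_1,Q_2,Q_3)$ with $(\CO,\CZ_1,\CZ_2,\CZ_3)$.
\item Apply the duality \eqref{dual-CO}, choosing the forward equation \eqref{CY} with $\chi=(T-\cdot)^\vartheta\chi'$ so that the $\xi$-pairing reproduces the $K_i$ terms. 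This yields $\wdh\EE\qv{Z_s,y}+\wdh\EE\int_s^T[\qv{Q_1,\Ga^\dag}+\qv{Q_2^j,\CR^{2,j}}+\int_\Xi\qv{Q_3,\CR^3}\pi(d\al)]dt=0$ for all bounded test data.
\item Since the test data are arbitrary and bounded elements are dense, conclude $Z=0$, $Q_1=0$, $Q_2^j=0$ and $Q_3=0$.
\end{enumerate}

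\textbf{Main obstacle.} The delicate step is identifying the weakly defined singular term with the pairing in \eqref{dual-CO}. This requires the convergence $\CY^{m,\infty}\to\CY$ in \thmref{thm:sup-norm} and the weighted estimate \eqref{CY-L2}, used to justify the exchange of the limit $m\to\infty$ with the duality. The jump terms cause no new difficulty, because only the $L^2$ isometry is needed there.
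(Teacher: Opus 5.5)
Your existence argument is essentially the paper's: term-by-term weak limits in \eqref{mild-Zn}, with \lemref{lem:Q1-m}(ii) and the bounded map $\fg\mapsto\int_s^T(T-t)^{-\vartheta}e^{(t-s)A^*}\fg_t\,dt$ handling the singular drift. The uniqueness part, however, has a genuine gap at step 2. Write $(\bar Z,\bar Q_1,\bar Q_2^j,\bar Q_3)$ for the difference of the two solutions. Taking $n=0$ in \lemref{lem:bar-P} puts the whole singular term $\xi=\sum_iK_i^*\bar Q_1e_i$ into the forcing, and \eqref{dual-CO} then reads
\[
\wdh\EE\qv{\bar Z_s,y}+\wdh\EE\int_s^T\Big[\qv{\bar Q_{1,t},\Ga^\dag_t}_{\CL_2(H)}+\qv{\bar Q^j_{2,t},\CR^{2,j}_t}+\int_\Xi\qv{\bar Q_{3,t}(\al),\CR^3_t(\al)}\pi(d\al)\Big]dt=\wdh\EE\int_s^T\qv{\xi_t,\wdt\CY^0_t}\,dt,
\]
where $\wdt\CY^0$ solves the forward equation \emph{without} the multiplicative noise $\sum_iK_i\CY\,d\beta^i$. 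By definition of $\xi$, the right side equals $\lim_m\wdh\EE\int_s^T\sum_{i\le m}\qv{\bar Q_{1,t}e_i,K_i(t)\wdt\CY^0_t}\,dt$. Nothing makes this vanish: it is exactly the term that the multiplicative noise in the dual forward equation is meant to absorb.

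Introducing $\chi=(T-\cdot)^\vartheta\chi'$ does not help. In \eqref{CY}, $\chi$ is a bounded exogenous additive noise. Cancellation would need $\chi$ to be the dual solution itself, which is unbounded and would turn the additive noise back into the multiplicative one. Nor can you fall back on \eqref{dual-Pm} and $\CY^{m,\infty}\to\CY$, as your ``main obstacle'' paragraph suggests. \eqref{dual-Pm} is proved only for the constructed limit of $(Z^n,Q_1^n,Q_2^{j,n},Q_3^n)$, and proving such a duality for an arbitrary mild solution is exactly what uniqueness requires.

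The missing idea is to apply \lemref{lem:bar-P} at a truncation level $n$ and then send $n\to\infty$. Keep $\sum_{i\le n}K_i^*\bar Q_{1,t}e_i$ in the drift and put only the tail into the forcing, $\xi^n_t=(T-t)^{-\vartheta}\bar{\mathscr W}^n_t$, where
\[
\bar{\mathscr W}^n_t=(T-t)^{\vartheta}\sum_{i=1}^\infty K_i^*(t)\bar Q_{1,t}e_i-(T-t)^{\vartheta}\sum_{i=1}^n K_i^*(t)\bar Q_{1,t}e_i .
\]
This is admissible, since $\wdh\EE\int_0^T(T-t)^{2\vartheta}|\xi^n_t|^2dt=\wdh\EE\int_0^T|\bar{\mathscr W}^n_t|^2dt<\infty$. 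The first $n$ singular terms are now dualized by the multiplicative noise $\sum_{i\le n}K_i\wdt\CY^n\,d\beta^i$ of the forward process $\wdt\CY^n$. Then \eqref{dual-CO} leaves only $\wdh\EE\int_s^T\qv{\bar{\mathscr W}^n_t,(T-t)^{-\vartheta}\wdt\CY^n_t}dt$ on the right. Three facts finish the argument:
\begin{enumerate}
\item The left side does not depend on $n$.
\item $\bar{\mathscr W}^n\rightharpoonup0$ in $L^2_\CP(\Omega\times[0,T];H)$, by condition (i) of \defref{def:mild-P} applied to both solutions.
\item $(T-\cdot)^{-\vartheta}\wdt\CY^n\to(T-\cdot)^{-\vartheta}\CY$ strongly in $L^2_\CP(\Omega\times[0,T];H)$, because $\CY^{\infty,n}\to\CY$ in $\HH_2([s,T])$ by \thmref{thm:sup-norm} (with $\chi=0$) and $\int_0^T(T-t)^{-2\vartheta}dt<\infty$.
\end{enumerate}
Pairing a weakly null sequence against a strongly convergent one kills the right side. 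This gives the vanishing identity your step 2 asserts, and your step 3 then concludes.
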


\begin{proof}
\textbf{Step 1: Existence.} For any $s\in [0,T]$, we
observe that the mapping
$\mathfrak{g} \mapsto \int_s^T (T-t)^{-\vartheta} e^{(t-s)A^*} \fg_t dt$
defines a bounded linear functional from $L_\CP^2(\Omega\times [0,T];H)$ to $L^2(\Omega,\CF_T,\wdh\QQ,H)$ and is therefore weakly continuous. Consequently, for each fixed $s\in [0,T]$, \lemref{lem:Q1-m} implies that the sum
%\bea
$\sum_{i=1}^m \int_s^T e^{(t-s)A^*} K_i^*(t)Q_{1,t} e_i \, dt =  \int_s^T (T-t)^{-\vartheta} e^{(t-s)A^*} \mathbb{K}^m (t)\, dt$
converges weakly in $L^2(\Omega,\CF_t,\wdh \QQ, H)$, to a limit that we denote by  
$\sum_{i=1}^\infty \int_s^T e^{(t-s)A^*} K_i^*(t) Q_{1,t}e_i dt.$
Similarly, 
$\sum_{i=1}^n \int_s^T e^{(t-s)A^*} K_i^*(t) Q_{1,t}^n e_i \, dt$ converges weakly in $L^2(\Omega,\CF_T,\wdh \QQ,H)$ to $\sum_{i=1}^\infty \int_s^T e^{(t-s)A^*} K_i^*(t) Q_{1,t} e_i\, dt$. Moreover, by the weak convergence of $Q_{1,t}^n$ to $Q_{1,t}$ in $L^2(\Omega\times [0,T];\CL_2(H))$ established in \corref{cor:conv-Pm}, we also have $\sum_{i=1}^\infty \int_s^T e^{(t-s)A^*} Q_{1,t}^n e_i d \beta_t^i$ converges weakly to $\sum_{i=1}^\infty \int_s^T e^{(t-s)A^*} Q_{1,t} e_i d\beta_t^i$. Passing to the limit in \eqref{CZ-n} shows that $(Z,Q_{1},Q_2^j, Q_{3}\cd)$ is a mild solution of \eqref{CZ}.

\textbf{Step 2: Uniqueness.} Let $(Z,Q_1,Q_2^j, Q_3\cd)$ and $(Z',Q'_1,Q_2'^{j}, Q_3'\cd)$ be two solutions of \eqref{CZ}. Define their difference as $\bar Z = Z-Z'$, $\bar Q_1=Q_1-Q'_1$, $\bar Q_2^j= Q_2^j-Q_2'^{j}$, and $\bar Q_3\cd=Q_3\cd-Q'_3\cd$. Then, \eqref{CZ} implies
\bea
\bar Z_s \ad = \int_s^T e^{(t-s) A^*} \CV_t \bar Z_t dt + \sum_{i=1}^n \int_s^T e^{(t-s)A^*} K_i^*(t)\bar Q_{1,t}e_i dt+ \int_s^T e^{(t-s)A^*} \CE_t^j \bar Q_{2,t}^j dt   \\
\aad \quad + \int_s^T e^{(t-s)A^*} (T-t)^{-\vartheta} \bar{\mathscr{W}}_t^n dt  -\sum_{i=1}^\infty \int_s^T e^{(t-s)A^*} \bar Q_{1,t} e_i d\beta_t^i  \\
\aad \quad -\int_s^T e^{(t-s) A^*} \bar Q_{2,t}^j dB_t^j  -\int_{\Xi_s^T} e^{(t-s) A^*} \bar Q_{3,t}(\al) \wdt N(d\al,dt),
\eea
where $\bar{\mathscr{W}}^n$ is defined as $
\bar{\mathscr{W}}_t^n := (T-t)^{\vartheta} \sum_{i=1}^\infty K_i^*(t) \bar Q_{1,t} e_i - (T-t)^{\vartheta} \sum_{i=1}^n K_i^*(t) \bar Q_{1,t}e_i.$

Hence, \lemref{lem:bar-P} implies $(\bar Z,\bar Q_{1},\bar Q_2^j, \bar Q_3\cd)$ is the unique mild solution of \eqref{CO-mild} with $\eta=0, \xi_t=(T-t)^{-\vartheta}\bar{\mathscr{W}}_t^n$. Thus, \eqref{dual-CO} gives
\bea\ad\!\!
\wdh \EE\bqv{\bar Z_s,y}+\wdh \EE\int_s^T \Big[\bqv{\bar Q_{1,t},\Ga_t^\dag}_{\CL_2(H)}+ \bqv{\bar Q_{2,t}^j, \CR_{t}^{2,j}} dt + \int_{\Xi} \bqv{\bar Q_{3,t}(\al), \CR_t^3(\al)} \pi(d\al)\Big] dt \\
\aad\!\! = \wdh \EE\int_s^T \bqv{(T-t)^{-\vartheta}\wdt \CY_t^{n,\Ga^\dag}, \bar{\mathscr{W}}_t^n}_{\CL_2(H)}dt,
\eea
where $\wdt \CY_t^{n,\Ga^\dag}$ is the mild solution of \eqref{CY-m} with $\chi=\CR^\dag=0$. Sicne $\bar{\mathscr{W}}^n$ converges weakly to zero by \lemref{lem:Q1-m}, we take $n\to \infty$ to above equality and have
\bea\!\!\!\!
\wdh \EE\qv{\bar Z_s,y}+\wdh \EE\int_s^T \big[\qv{\bar Q_{1,t},\Ga_t}_{\CL_2(H)}+ \qv{\bar Q_{2,t}^j, \CR_{t}^{2,j}} dt + \int_{\Xi} \qv{\bar Q_{3,t}(\al), \CR_t^3(\al)} \pi(d\al)\big]dt=0.
\eea
This establishes uniqueness and completes the proof.
\end{proof}

\subsection{Trace class regularity of $Q_{1}$}
We now show that the martingale integrand $Q_{1}$ belongs to the trace class, a  property that is
essential for formulating the stochastic maximum principle.
  Note that $Q_1$ naturally takes values in the Hilbert-Schmidt class, the Hilbert-Schmidt smoothing property of the semigroup $e^{tA}$ in \assmref{ass1} enables us to control the singular drift term in the adjoint BSPDE
  through a duality argument, to establish weighted $\CL_1(H)$-bounds for $Q_1$.
\begin{prop}
Let $(Z,Q_{1}, Q_2^j, Q_3\cd)$ be the unique mild solution of \eqref{CZ}. We have
\bea
\wdh \EE\int_0^T (T-t)^{2\vartheta} |Q_{1,t}|_{\CL_1(H)}^2 dt \leq C\big(\wdh \EE|\eta|^2+ \wdh \EE\int_0^T |\CJ_\tau|^2 dt \big).
\eea
\end{prop}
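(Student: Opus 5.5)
We prove the weighted trace-class bound by duality, following the strategy of \cite[Proposition 4.14]{FHT18}. The starting point is the characterization of the trace norm through bounded operators: for $Q\in\CL_2(H)$,
\[
|Q|_{\CL_1(H)}=\sup\{\qv{Q,\Ga}_{\CL_2(H)}:\ \Ga \text{ finite rank},\ |\Ga|_{\CL(H)}\leq 1\},
\]
since finite-rank operators are dense in the predual sense. Consequently, for the weighted norm in $L^2(\Omega\times[0,T];\CL_1(H))$ it suffices to show
\[
\Big|\wdh\EE\int_0^T \qv{Q_{1,t},\Ga^\dag_t}_{\CL_2(H)}dt\Big| \le C\big(\wdh \EE|\eta|^2+ \wdh \EE\textstyle\int_0^T |\CJ_t|^2 dt \big)^{1/2}\Big(\wdh\EE\int_0^T (T-t)^{-2\vartheta}|\Ga^\dag_t|_{\CL(H)}^2dt\Big)^{1/2}
\]
for all progressively measurable, bounded, finite-rank-valued $\Ga^\dag$. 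The dual of $L^2((T-t)^{2\vartheta}dt\otimes d\wdh\QQ;\CL_1)$ contains $L^2((T-t)^{-2\vartheta};\CL(H))$ through this pairing, and a measurable-selection/density argument (approximating $Q_1$ by simple processes and choosing near-optimal finite-rank $\Ga$ pointwise) turns the inequality into the claimed bound. The standing assumption that $\Ga^\dag$ is bounded in $\CL_2(H)$ holds because $\Ga^\dag$ is finite rank and bounded.

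To obtain the inequality, apply the duality \eqref{dual-Pm} of \corref{cor:conv-Pm} with $s=0$, $y=0$, $\chi=0$, $\CR^\dag=0$, $\CR^{2,j}=0$, $\CR^3=0$, keeping $\Ga^\dag$. Here $\CY^{m,\infty}=\CY$, so
\[
\wdh\EE\int_0^T\qv{Q_{1,t},\Ga^\dag_t}_{\CL_2(H)}dt=\wdh\EE\qv{\eta,\CY_T}+\wdh\EE\int_0^T\qv{\CJ_t,\CY_t}dt .
\]
By Cauchy--Schwarz, the right-hand side is bounded by $(\wdh\EE|\eta|^2)^{1/2}(\wdh\EE|\CY_T|^2)^{1/2}+(\wdh\EE\int|\CJ|^2)^{1/2}(\int_0^T\wdh\EE|\CY_t|^2dt)^{1/2}$.

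The estimate \eqref{CY-L2} of \propref{prop:mom-2} is the key ingredient. With all other data zero it reads $\wdh\EE|\CY_t|^2\le C\int_0^t (t-\tau)^{-2\vartheta}\wdh\EE|\Ga^\dag_\tau|_{\CL(H)}^2d\tau$. One must drop the constant $1$ in that estimate; it is an artifact of the inhomogeneous terms, and linearity in the data, which all vanish except $\Ga^\dag$, justifies this.

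For $t=T$ this gives exactly $\wdh\EE|\CY_T|^2\le C\,\wdh\EE\int_0^T (T-\tau)^{-2\vartheta}|\Ga^\dag_\tau|^2_{\CL(H)}d\tau$. For the integrated term, Fubini gives $\int_0^T\wdh\EE|\CY_t|^2dt\le C\int_0^T \wdh\EE|\Ga^\dag_\tau|^2\int_\tau^T(t-\tau)^{-2\vartheta}dt\,d\tau\le C\int_0^T\wdh\EE|\Ga^\dag_\tau|^2d\tau$. This is in turn bounded by $C T^{2\vartheta}\int_0^T(T-\tau)^{-2\vartheta}\wdh\EE|\Ga^\dag_\tau|^2d\tau$, using $(T-\tau)^{-2\vartheta}\ge T^{-2\vartheta}$ when $\vartheta\ge0$.

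Combining these estimates yields the inequality, and squaring gives the proposition. The main obstacle is the first step: making the duality characterization rigorous at the level of processes. It requires checking that the supremum over bounded finite-rank $\Ga^\dag$ of the pairing really recovers $\big(\wdh\EE\int(T-t)^{2\vartheta}|Q_{1,t}|_{\CL_1}^2dt\big)^{1/2}$, including measurability of the pointwise-optimal $\Ga$ (for instance via the polar decomposition of finite-dimensional truncations $\fT_nQ_{1,t}\fT_n$, then letting $n\to\infty$ by monotone convergence). The jumps enter only through \eqref{CY-L2}, which has already been established.
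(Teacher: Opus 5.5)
Your proposal is correct and follows essentially the paper's argument. Both proofs apply the duality \eqref{dual-Pm} with all data zero except a test process $\Ga^\dag$, which is aligned with the singular structure of $Q_{1,t}$ and has operator norm controlled by an arbitrary bounded scalar weight $\mathfrak{p}$. Both then bound the right-hand side through the $\CL(H)$-norm estimate \eqref{CY-L2} and conclude from the arbitrariness of $\mathfrak{p}$. Your two deviations are more careful versions of the same steps. First, you build the test operators from polar decompositions of the compressions $\fT_n Q_{1,t}\fT_n$ instead of the paper's asserted progressively measurable singular value decomposition $Q_{1,t}=\sum_\iota a_\iota(t)e'_\iota(t)\qv{e_\iota(t),\cdot}$, which makes measurability transparent. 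Second, you remove the additive constant in \eqref{CY-L2} by linearity, whereas the paper keeps it and absorbs it in the final step.
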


\begin{proof}
Similar to the argument of \cite[Appendix]{FHT18},
we provide the details of the proof in what follows. Since $Q_{1,t}\in \CL_2(H),\wdh\QQ$-a.s. and thus compact, it can be decomposed as $Q_{1,t}=\sum_{\iota=1}^\infty a_\iota(t) e'_\iota(t) \qv{e_\iota(t),\cdot}$, where $a_\iota(t)\in \rr$, and $\{e'_\iota(t)\}_\iota, \{e_\iota(t)\}_{\iota}$ are orthonormal bases of  $H$. The process $a_\iota(t),e'_\iota(t), e_\iota(t)$ are chosen to be progressively measurable. Let
\beqa{Ga-n}
\Ga_t^n= \mathfrak{p}(t)\sum_{\iota=1}^n \text{sgn}(a_\iota(t)) e'_\iota(t)\qv{e_\iota(t),\cdot}
\eeqa
where $\mathfrak{p}$ is an arbitrary positive real-valued bounded progressively measurable process. Since $|\Ga_t^n|_{\CL(H)} \leq \mathfrak{p}(t)$ and $\Ga_t^n$ has rank $n$, the process $\Ga^n$ is also bounded in $\CL_2(H)$.

Let $\CY^n$ denote the solution of \eqref{CY} with $s=y=\chi=\CR^\dag=\CR^2=\CR^3=0$ and $\Ga^\dag=\Ga^n$. By the duality 
%formula 
\eqref{dual-n}, we have
%\bea
$\wdh \EE\int_0^T \qv{Q_{1,t},\Ga_t^n}_{\CL_2(H)}dt = \wdh \EE\qv{\eta,\CY_T^n}+ \wdh \EE\int_s^T \qv{\CJ_t, \CY_t^n} dt.$
%\eea
Computing $\qv{Q_{1,t},\Ga_t^n}$ and applying the  \holder inequality to estimate the right-hand side of above equality using \eqref{CY-L2}, we obtain
\bea
\wdh \EE\int_0^T \sum_{\iota=1}^n |a_\iota(t)| \mathfrak{p}(t)dt
& \leq C \big(\wdh \EE|\eta|^2\big)^{1/2} \big(1+\wdh \EE\int_0^T(T-t)^{-2\vartheta} \mathfrak{p}^2(t)dt \big)^{1/2} \\
&  \quad + C \big(\int_0^T \wdh \EE|\CJ_t|^2 dt \big)^{1/2} \big(1+ \wdh \EE\int_0^T \mathfrak{p}^2(t) dt \big)^{1/2} .
\eea
Letting $n\to\infty$ and noticing that $|Q_{1,t}|_{\CL_1(H)}=\sum_{\iota=1}^\infty |a_\iota(t)|$, we have
\bea
\wdh \EE\int_0^T \mathfrak{p}(t) |Q_{1,t}|_{\CL_1(H)}dt \leq C_{\mathfrak{p},\eta}^{1/2} \Big(1+
\wdh \EE\int_0^T (T-t)^{-2\vartheta} \mathfrak{p}^2(t) dt \Big)^{1/2},
\eea
where $C_{\mathfrak{p},\eta}=C(\EE|\eta|^2 + \EE\int_0^T |\CJ_t|^2 dt)$. Define $\wdt{\mathfrak{p}}(t)=(T-t)^{-\vartheta} \mathfrak{p}(t)$, we have
\bea
\wdh \EE \int_0^T \wdt{\mathfrak{p}}(t) [(T-t)^{-\vartheta}|Q_{1,t}|_{\CL_1(H)}] dt \leq C_{\mathfrak{p},\eta}^{1/2}(1+|\wdt{\mathfrak{p}}|_{L_\CP^2(\Omega\times [0,T])}).
\eea
Since $\mathfrak{p}$ is arbitrary, the proof is complete.
\end{proof}

\section{Malliavin calculus approach}\label{sec:Mal}
In this section, we extend our results to the non-Markovian system allowing all coefficients $F,G_1,G_2^j,\Theta,g,h^j$ in \eqref{sys1} and $L,\phi$ in the cost \eqref{cost} depend on $\omega\in \Omega$. We assume that for any fixed $(x,u)$, the map $\omega\mapsto F(t,x,u,\omega)$ is $\CF_t$-adapted with similar
adaptedness
holding for $G_1,G_2^j,\Theta,g,h^j,L,\phi$. For the derivation of stochastic
maximum principle for optimal control problems using Malliavin calculus, we refer to the work \cite{Ben92,WWX13,MMPB13,OS10,Tan98,BEK89} and references therein.

For any $\kappa\in [1,+\infty)$, we define the Schatten class $\mathbb{S}^\kappa$ as the space of compact operators $\fB: H\to H$ such that
$\|\fB\|_\kappa :=\big[\sum_{i=1}^\infty\la_i(\fB^*\fB)^{\kappa/2}\big]^{1/\kappa}< + \infty$,
where $\la_i(\fB^* \fB)$ is the $i$-th eigenvalue of $\fB^* \fB$. For $\kappa \in [1,\infty)$, the space $\mathbb{S}^\kappa$ is a UMD Banach space; furthermore, it is an $M$-type 2 Banach space whenever $\kappa\geq 2$; see \cite[Lemma 5.1]{GP24}. We refer the reader to \cite{NVW08,Rud04} and references therein for a detailed treatment of stochastic integration on UMD Banach spaces.

\subsection{Preliminaries for Malliavin calculus}
We recall some basic notion and results
of Malliavin calculus
in \cite{Nua06,NOP08}. Let $V$ be a real separable Hilbert space. For a Malliavin differentiable $V$-valued random variables $\zeta$, we denote by
$D_t^W \zeta, D_t^{B^j}\zeta$, and $D_{t,\al}^{\wdt N}\zeta$ its Malliavin derivatives %of a differentiable $V$-valued random variable $\zeta$
with respect to $W$, $B^j$ at $t$, and to the compensated Poisson random measure
$\wdt N(\cdot,\cdot)$ at $(t,\al)$, respectively. Let $\DD_{1,2}(V)$ be the space of all $V$-valued random variables that are Malliavin differentiable with respect to $W, B^j$, and $\wdt N$. And let $\LL_{1,2}(V)$ be the set of all progressively measurable processes such that (i) for a.e. $t\in [0,T]$, $\zeta(t,\cdot)\in \DD_{1,2}(V)$; (ii) $(t,\omega) \mapsto D_s^W \zeta(t,\omega) \in L_\CF^2([0,T];V)$ admits a progressively measurable version such that $\wdh \EE\big(\int_0^T |\zeta(t,\omega)|_V^2 dt + \int_0^T \int_0^T \|D_s^W \zeta(s,\omega)\|_{\CL_2(H,V)}^2 ds dt \big)<+\infty$ with analogous properties holding for Malliavin derivatives with respect to $B^j$ and $\wdt N$. The tool for analysis is the following duality formula: for  $\zeta\in \DD_{1,2}(V)$, it holds that
\beqa{Mal-BW}
& \wdh \EE\big[\bqv{\zeta, \int_0^T \wdh \phi(s)dW_s}_H \big]  = \wdh \EE\int_0^T \bqv{\wdh \phi(s), D_s^W \zeta }_{\CL_2(H)}
ds\\
&
\wdh \EE\big[\bqv{\zeta,\int_0^T \wdt \phi(s)dB^j_s}_V \big] =\wdh \EE\int_0^T \bqv{\wdt \phi(s), D_s^{B^j} \zeta}_V  ds,\\
& \EE\big[\bqv{\zeta, \int_0^T \int_{\Xi} \wdt \psi(s,\al) \wdt N(d\al,ds)}_V \big] = \wdh \EE\int_0^T \int_{\Xi} \bqv{\wdt \psi(s,\al), D_{s,\al}^{\wdt N} \zeta}_V \pi(d\al)ds,
\eeqa
for any $\CF_s$-predictable process $\wdh \phi(s) \in \CL_2(H)$ and $\wdt\phi(s), \wdt\psi(s,\al) \in V$ such that the integrals on the right converges absolutely. Furthermore, we also need the following property of Malliavin derivatives: if $\zeta\in \DD_{1,2}(V)$ is $\CF_s$-measurable, then $D_t^W \zeta=D_t^{B^j}\zeta = D_{t,\al}^{\wdt N} \zeta=0, \forall\, t>s$. In what follows, $V$ is taken to be $\rr$ and $H$, as appropriate.

\subsection{Stochastic flows for linear SPDEs with jumps}\label{sec:flow} To proceed,  we consider the following  linear SPDEs with jumps for $t\in [s,T]$:

\beqa{SPDE-jump}
d x_t^1 & = \big[A x_t^1 + \wdh O_t x_t^1 \big] dt + \nabla_x \wdh G_1(t)x_t^1 dW_t  \\
&\quad +  \nabla_x \wdh G_2^j(t) x_t^1 dB_t^j+ \int_\Xi \nabla_x \wdh \Theta(t,\al) x_t^1 \wdt N(d\al,dt), \quad x_s^1=x \in H.
\eeqa
The solution of \eqref{SPDE-jump} is understood in the mild sense as usual and has a \cadlag trajectories in $H$ denoted by $x_t^{1,x}, t \geq s$. Let $\Dl:=\{(s,t):0 \leq s \leq t <+\infty\}$.

\begin{defn}\label{def:flow}\rm{
We say that \eqref{SPDE-jump} defines a stochastic flow if there exists a mapping $\Phi: \Dl \times \Omega \to \CL(H)$ such that: (i) for every $s\geq 0$ and $x\in H$, the process $\Phi(t,s,\cdot)(x), t\geq s$ has a \cadlag trajectories in $H$, $\wdh \QQ$-a.s; (ii) for every $s \geq 0$ and $x\in H$, we have $\Phi(t,s)(x)=x_t^{1,x}$ for all $t\geq s$, $\wdh \QQ$-a.s.; (iii)  for all $0\leq s\leq t \leq r$ and $\omega\in \Omega$, $\Phi(r,t;\omega)\circ \Phi(t,s;\omega)= \Phi(r,s;\omega)$.
}
\end{defn}

Our goal of this subsection is to represent the solution of \eqref{SPDE-jump} above by \textit{stochastic flows}, namely, $x_t^1=\Phi(t,s)x_s^1$, where $\Phi(t,s)$ denotes a \textit{random evolution operator} satisfying the following \textit{operator-valued} SPDEs with jumps written formally as
\vspace{-0.2cm}
\beqa{eq-Phi}
d\Phi(t,s)& = \big[\CA \Phi(t,s) dt + \wdh O_t \Phi(t,s) \big] dt+ \disp \sum_{i=1}^\infty K_i(t)\Phi(t,s)\, d\beta^i_t  \\
&\; + \nabla_x \wdh G_2^j(t)\Phi(t,s)dB_t^j + \int_\Xi \nabla_x \wdh \Theta(t,\al) \Phi(t,s) \wdt N(d\al,dt),\, \Phi(s,s)=I,
\eeqa
where $\CA$ is the infinitesimal generator of a semigroup $\CS(t)$, defined by
$\CS(t) \Phi:=e^{tA}\circ \Phi$ for $t \geq 0$ and $\Phi\in \CL(H)$ with $\circ$ denoting the composition of operators.

It is natural to seek a stochastic flow $\Phi$ satisfying \eqref{eq-Phi} on the space of bounded operators. However, when $H$ is infinite dimensional, a proper theory of stochastic integration on $\mathcal{L}(H)$ is
not available.
Instead, stochastic integration can be developed on smaller operator spaces, such as the Hilbert-Schmidt class, as developed by Flandoli \cite{FU90,Fla24}, and more generally on the Schatten classes, as considered in \cite{GP24}. We further note that \eqref{eq-Phi} is driven by infinitely many Brownian motions $\{\beta^i\}_{i=1}^\infty$. Consequently, the approach of \cite{MMPB13}, which relies on Kunita's theory of stochastic flows in Euclidean space, is not applicable here, as it requires finitely many driving Brownian motions. Our method is related in spirit to \cite{GP24}, where stochastic integration on UMD Banach spaces plays an central role. Nevertheless, the results of \cite{GP24} cannot be applied directly in our setting, since their analysis assumes time-independent operators $K_i$ satisfying the summability condition
$\sum_{i=1}^\infty|K_i|_{\CL(H)}^2 <+\infty$, which is violated in our framework; see \remref{rem:est-Ki}. Accordingly, additional
adaptions are required.

By \cite[Lemma 5.3]{GP24}, $\CS=\{\CS(t)\}$ forms a semigroup on $\CL(H)$, but it does not, in general,  satisfy the $C_0$-property on $\CL(H)$. Nevertheless, when restricted to the Schatten class $\bS^\kappa$, it becomes a $C_0$-contraction semigroup.

\begin{thm}\label{thm:flows}Assume that there exists constants $\vartheta'<1/2$ and $C>0$ such that $\|e^{tA}\|_{\kappa} \leq C t^{-\vartheta'}, \forall\, t\in [0,T]$. Let Assumptions \ref{ass} hold with space $\CL_2(H)$ and constant $\vartheta$ in (H2) replaced by $\bS^\kappa$ and $\vartheta'$, respectively. Then for any $s \geq 0$, \eqref{eq-Phi} has a unique mild solution in $\mathbb{S}^\kappa$. Moreover $(s,+\infty) \ni t\mapsto \Phi(t,s)\in \mathbb{S}^\kappa$ is \cadlag $\wdh\QQ$-a.s. Hence, $\Phi$ is the stochastic flow corresponding to \eqref{SPDE-jump}.
\end{thm}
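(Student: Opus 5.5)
The plan is to treat \eqref{eq-Phi} as a linear stochastic evolution equation in the M-type 2 Banach space $\bS^\kappa$ ($\kappa\ge 2$) and to solve it by a fixed point argument for the mild formulation. Since $\CS(t)\Phi=e^{tA}\circ\Phi$ does not act nicely on $\Phi(s,s)=I\notin\bS^\kappa$, I first split off the free part. I write $\Phi(t,s)=e^{(t-s)A}+\Psi(t,s)$, where $e^{(t-s)A}\in\bS^\kappa$ for $t>s$ with $\|e^{(t-s)A}\|_\kappa\le C(t-s)^{-\vartheta'}$. Then $\Psi$ should solve
\begin{equation*}
\Psi(t,s)=\int_s^t e^{(t-\tau)A}\wdh O_\tau\Phi(\tau,s)\,d\tau+\int_s^t e^{(t-\tau)A}\nabla_x\wdh G_1(\tau)[\Phi(\tau,s)\,\cdot\,]\,dW_\tau+\int_s^t e^{(t-\tau)A}\nabla_x\wdh G_2^j(\tau)\Phi(\tau,s)\,dB^j_\tau+\int_{\Xi_s^t}e^{(t-\tau)A}\nabla_x\wdh\Theta(\tau,\al)\Phi(\tau,s)\,\wdt N(d\al,d\tau).
\end{equation*}
Here the $W$-integral is understood as $\sum_i\int e^{(t-\tau)A}K_i(\tau)\Phi(\tau,s)\,d\beta^i_\tau$, which is a stochastic integral of a $\gamma$-radonifying integrand $H\to\bS^\kappa$.

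The key steps are as follows.

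\emph{Step 1 (Gaussian term).} Using the $\bS^\kappa$-version of (H2) and \remref{rem:est-Ki}, I bound the $\gamma(H;\bS^\kappa)$-norm of $h\mapsto e^{(t-\tau)A}\nabla_x\wdh G_1(\tau)[\Phi h]$ by $C(t-\tau)^{-\vartheta'}\|\Phi\|_\kappa$. This replaces the summability $\sum_i|K_i|^2<\infty$ required in \cite{GP24}, and it is where the smoothing exponent $\vartheta'<1/2$ enters. I then use the BDG inequality in M-type 2 spaces \cite{NVW07,NVW08}, combined with the factorization method exactly as in \eqref{fact}--\eqref{bar-CY} with $0<\dl<1/2-\vartheta'$. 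This gives $\wdh\EE\sup_t\|\cdot\|_\kappa^p\le C\int_s^T\wdh\EE\sup_{\sg\le\tau}\|\Phi(\sg,s)\|_\kappa^p\,d\tau$.

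\emph{Step 2 (remaining terms).} The $dt$ and $dB^j$ terms are handled by the boundedness of $\wdh O,\nabla_x\wdh G_2^j$. The jump term is handled by a maximal inequality for Poisson stochastic convolutions in $M$-type 2 spaces with a $C_0$-contraction semigroup; $\CS$ is contractive on $\bS^\kappa$, and $\bS^\kappa$ has martingale type 2. Bounding the Poisson convolution uses (H6), as in \thmref{thm:sup-norm}, to obtain the same Gronwall-type estimate.

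\emph{Step 3 (fixed point).} The contributions coming from the inhomogeneous part $e^{(\tau-s)A}$ carry the singular weight $(\tau-s)^{-\vartheta'}$. These are finite because $\int_s^t(t-\tau)^{-2\dl-2\vartheta'}(\tau-s)^{-2\vartheta'}\,d\tau<\infty$ requires a weighted norm; I therefore work in the space of adapted \cadlag processes with $\wdh\EE\sup_t e^{-\varrho t}\|\Psi(t,s)\|_\kappa^p<\infty$. The map is a contraction for $\varrho$ large, giving existence and uniqueness, and the \cadlag property follows from the factorization (continuous part) and the \cadlag property of Poisson convolutions in $\bS^\kappa$.

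\emph{Step 4 (flow property).} For $x\in H$, apply the mild equation to $\Phi(t,s)x$. Since evaluation at $x$ is a bounded operator $\bS^\kappa\to H$, it commutes with the stochastic integrals, so $\Phi(t,s)x$ solves \eqref{SPDE-jump}. By the uniqueness of \thmref{thm:sup-norm}, $\Phi(t,s)x=x_t^{1,x}$ a.s., which is (ii). For (iii), uniqueness of \eqref{eq-Phi} started at time $t$ from $\Phi(t,s)$ shows $\Phi(r,t)\Phi(t,s)=\Phi(r,s)$ a.s. for fixed $s,t,r$. Right-continuity in $r,t$ then extends this to all times, after choosing a version, on a common full-measure set; on the remaining null set I redefine $\Phi$.

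The main obstacle is Step 1 combined with the singularity at $\tau=s$. The integrand is not summable in $\CL(H)$, so I must verify that the $\gamma$-radonifying norm into $\bS^\kappa$ is controlled by $(t-\tau)^{-\vartheta'}\|\Phi\|_\kappa$. My first attempt is to use the ideal property $\|e^{(t-\tau)A}K\Phi\|\le\|e^{(t-\tau)A}K\|\,|\Phi|$ together with the square-function characterization of $\gamma(H;\bS^\kappa)$. A second concern, also part of this obstacle, is the need to justify the jump maximal inequality in $\bS^\kappa$ rather than in a Hilbert space.
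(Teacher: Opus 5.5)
Your Step~3 fails once $\vartheta'\ge 1/4$: the space of adapted c\`adl\`ag processes with $\wdh\EE\sup_t e^{-\varrho t}\|\Psi(t,s)\|_\kappa^p<\infty$ is too small to contain $\Psi$. The problem is the Gaussian convolution of the free part, $X(t):=\sum_i\int_s^t e^{(t-\tau)A}K_i(\tau)e^{(\tau-s)A}\,d\beta^i_\tau$, which your fixed-point map must send into that space. Your Step~1 bound gives only $\wdh\EE\|X(t)\|_\kappa^2\le C\int_s^t(t-\tau)^{-2\vartheta'}(\tau-s)^{-2\vartheta'}d\tau=C'(t-s)^{1-4\vartheta'}$, and this rate is attained. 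Take $\kappa=2$, $H=L^2(0,2\pi)$ with periodic boundary conditions, $A=-(-\Delta)^{\beta/2}$, and $K_i$ the multiplication by the $i$-th trigonometric basis function. This is the linearization of a Nemytskii coefficient $G_1(x)h=\sigma(x)h$ at points where $\sigma'=1$, and it satisfies all hypotheses. Parseval gives $\sum_i\|e^{aA}K_ie^{bA}\|_2^2=(2\pi)^{-1}\|e^{aA}\|_2^2\|e^{bA}\|_2^2\asymp(ab)^{-1/\beta}$, hence $\wdh\EE\|X(t)\|_2^2\asymp(t-s)^{1-4\vartheta'}$ with $\vartheta'=1/(2\beta)$. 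For $1<\beta<2$ this blows up as $t\downarrow s$, while the remaining Volterra term is only $O((t-s)^{2-6\vartheta'})$. So $\sup_{t\in(s,T]}\wdh\EE\|\Psi(t,s)\|_2^2=\infty$, and $\Psi$ lies outside your space for every $p\ge 2$. The weight $e^{-\varrho t}$ is bounded near $t=s$ and cannot absorb a singularity there. Your own exponents confirm this. The factorization bound for the supremum needs $\dl>1/p$. Integrability at $\tau=s$ of $\wdh\EE\|\bar\CY_\tau\|^p\lesssim(\tau-s)^{p(1-2\dl-4\vartheta')/2}$ needs $p(2\dl+4\vartheta'-1)<2$. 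Together these force $\vartheta'<1/4$, which already excludes the one-dimensional heat semigroup, where $\vartheta'=1/4$.

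The paper avoids this by never putting the supremum inside the expectation during the fixed-point step. It works in $\TT(\bS^\kappa)$, the adapted processes on $(s,T]$ with $\tnorm{\Phi}_\varrho^2=\wdh\EE\int_s^T e^{-\varrho t}\|\Phi(t)\|_\kappa^2\,dt<\infty$. There $e^{(\cdot-s)A}$ itself lies in the space, since $\int_s^T(t-s)^{-2\vartheta'}dt<\infty$, so no splitting is needed. The contraction uses only the pointwise bound $\wdh\EE\|\CT\Phi_1(t)-\CT\Phi_2(t)\|_\kappa^2\le C\int_s^t(t-r)^{-2\vartheta'}\wdh\EE\|\Phi_1(r)-\Phi_2(r)\|_\kappa^2\,dr$. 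Combined with Fubini and $\int_0^\infty e^{-\varrho t}t^{-2\vartheta'}dt=\varrho^{2\vartheta'-1}\check{\Ga}(1-2\vartheta')$, this works for every $\vartheta'<1/2$. Path regularity is claimed only on the open interval $(s,\infty)$, consistent with the blow-up above. Away from $s$ the equation restarts from an $\bS^\kappa$-valued datum, and essentially your factorization argument applies there. The rest of your plan can be kept. For $\kappa=2$, the only case used later, the Step~1 bound is just Remark~\ref{rem:est-Ki} summed over an orthonormal basis. For $\kappa>2$ your caution is warranted: since $\|\Phi\|_\kappa\le\|\Phi\|_2$, that estimate cannot be reduced to the Hilbert--Schmidt one by comparing norms. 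Your Step~4, run with uniqueness in this $L^2$-in-time class, proves the flow property more carefully than the paper's ``follows directly.''
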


\begin{proof}
To simplify the exposition, we set $\wdh O_t = 0$ and $\nabla_x \wdh G_2^j(t) = 0$ in \eqref{SPDE-jump} and \eqref{eq-Phi}. The treatment of the general case is analogous. Fix $0\leq s <T<+\infty$, and let $\Phi(t):=\Phi(t,s)$ for simplicity. Denote by $\mathbb{T}(\bS^\kappa)$ be the set of all adapted measurable process $\Phi:(s,T] \times \Omega \to \bS^\kappa$ such that $\wdh \EE\int_s^T \|\Phi(t)\|_{\kappa}^2 dt <+\infty$. On $\TT(\bS^\kappa)$, consider its equivalent norm $\tnorm{\Phi}_\varrho\!:=\big[\wdh \EE\int_s^T e^{-\varrho t} \|\Phi(t)\|_{\kappa}^2 dt \big]^{1/2}$, for some $\varrho\geq 0$.

By our assumptions, there exists a constant $C>0$ such that for any $\Phi\in \TT(\bS^\kappa)$,

\bea\ad\!\!\!\!\!\!\!
\sum_{i=1}^\infty \wdh \EE\int_s^t \|\CS(t-r)K_i(r)\Phi(r)\|_{\kappa}^2 dr \leq \sum_{i=1}^\infty \wdh \EE\int_s^t \|e^{(t-r)A}\nabla_x \wdh G_1(r)\|_{\CL(H;\bS^\kappa)}^2 |\Phi(r)e_i|_H^2 dr  \\
\aad\!\!\!\!\!\!\! \leq C\, \wdh \EE  \int_s^t (t-r)^{-2\vartheta'} \|\Phi(r)\|_{2}^2 dr \leq C\,  \wdh \EE\int_s^t (t-r)^{-2\vartheta'} \|\Phi(r)\|_{\kappa}^2 dr,
\eea
where the last line follows from Fubini's theorem and $\|\Phi(r)\|_2\leq \|\Phi(r)\|_\kappa,\forall\, \kappa \geq 2$. By the ideal property of Schatten class $\bS^\kappa$ and assumption (H6), we also have
\vspace{-0.2cm}
\bea\ad\!\!\!\!\!\!\!\!\!
\wdh \EE\int_s^t\!\! \int_\Xi \| e^{(t-r)A}\nabla_x \wdh \Theta(r,\al)\Phi(r)\|_{\kappa}^2 \pi(d\al)dr \\
\aad\!\!\!\!\!\!\!\!\! \leq \wdh \EE\!\! \int_s^t\!\! \int_\Xi\! \|e^{(t-r)A}\|_{\kappa}^2 \|\nabla_x \wdh \Theta(r,\al)\|_{\CL(H)}^2 \|\Phi(r)\|_{\kappa}^2 \pi(d\al) dr \leq C\, \wdh \EE\! \int_s^t\!\! (t-r)^{-2\vartheta'}\! \|\Phi(r)\|_{\kappa}^2 dr.
\eea
Therefore, the mapping
\bea
\CT \Phi(t) &\!\!\! :=\CS(t-s)I + \sum_{i=1}^\infty \int_s^t \CS(t-r) K_i(r)\Phi(r)d \beta_r^i   \\
& + \int_s^t\int_\Xi \CS(t-r) \nabla_x \wdh \Theta(r,\al)\Phi(r) \wdt N(d\al,dr)
\eea
is well-defined from $\mathbb{T}(\bS^\kappa)$ to $\mathbb{T}(\bS^\kappa)$. Hence, for $\varrho>0$ large enough, $\CT$ is contraction on $(\TT(\bS^\kappa),|||\cdot|||_\varrho)$. Indeed, for any $\Phi_1,\Phi_2\in \TT(\bS^\kappa)$, we have
\bea\!\!
\tnorm{\CT(\Phi_1)-\CT(\Phi_2)}_\varrho^2  \ad\!\!\! \leq C_T \wdh \EE\int_s^T e^{-\varrho t} \int_s^t (t-r)^{-2\vartheta'} \|\Phi_1(r)-\Phi_2(r)\|_{\kappa}^2 dr dt  \\
\ad \!\!\! \leq C_T\, \wdh \EE \int_s^T e^{-\varrho r} \|\Phi_1(r)-\Phi_2(r)\|_{\kappa}^2 \int_r^T e^{-\varrho(t-r)}(t-r)^{-2\vartheta'} dt\, dr \\
\ad \!\!\! \leq \frac{C_T}{\varrho^{1-2\vartheta'}}\  \check{\Ga} (1-2\vartheta')\ \wdh \EE\int_s^T e^{-\varrho r}\|\Phi_1(r)-\Phi_2(r)\|_{\kappa} dr,
\eea
where we used the fact that $\int_0^\infty e^{-\varrho t} t^{-2\vartheta'}dt = \varrho^{2\vartheta'-1} \check{\Ga}(1-2\vartheta')$ with $\check{\Ga}$ being the Gamma function. By the  Banach fixed point theorem, there exists a $\Phi(s,\cdot)\in \mathbb{T}(\bS^\kappa)$ such that $\CT(\Phi(s,\cdot))=\Phi(s,\cdot)$. For any $\Phi\in \TT(\bS^\kappa)$ and $t\geq s$, the \cadlag property of $\Phi$ follows from the fact that the stochastic integral $\sum_{i=1}^\infty \int_s^t \CS(t-r) K_i(r)\Phi(r)d\beta_r^i$ has  continuous paths in $\bS^\kappa$ and $\int_s^t \int_\Xi \CS(t-r) \nabla_x \wdh \Theta(r,\al) \Phi(r) \wdt N(d\al,dr)$ has \cadlag paths in $\bS^\kappa$ by using standard stochastic factorization method in \cite{DZ14}; see also \cite{MPR10} and similar arguments in \thmref{thm:sup-norm}. The stochastic flow property in \defref{def:flow} follows directly. Thus the proof is complete.
\end{proof}

\subsection{Stochastic maximum principles} In this subsection, we consider $\kappa=2$ and $\vartheta'=\vartheta$ in \thmref{thm:flows} and
let $\Phi(\cdot,s)$ be the solution of \eqref{eq-Phi} in $\bS^2$. We introduce some new adjoint processes. Define $
\aleph(t):=\phi(\wdh x_T)+\int_t^T\int_{\Xi} \wdh L(s,\al) \pi(d\al) ds$ and
\beqa{def-Pi}
\Pi_t:= e^{(T-t)A^*}\big[\nabla_x \phi(\wdh x_T)-\nabla_x^* f(\wdh x_T) \ell_T \big] + \int_t^T \int_{\Xi} e^{(s-t)A^*} \nabla_x \wdh L(s,\al) \pi(d\al) ds.
\eeqa
By the Riesz representation theorem, we identify $\CL(H;\rr)$ with $H$ and hence view $\Pi_t$ as an $H$-valued process. The same convention applies to other quantities taking values in $\CL(H;\rr)$ whenever needed.
Recalling $\wdh O_t$ in \eqref{def-OR}, we introduce
\beqa{def-SH}
\!\! \nabla_x \wdh \SH(s)&\!\!\!\!\!\!\disp :=\int_\Xi \Big\{ \big[\wdh O_s \big]^* \Pi_s + \big[\nabla_x \wdh G_1(s) \big]^* D_s^W \Pi_s + \big[\nabla_x \wdh G_2^j(s)\big]^* D_s^{B^j} \Pi_s \\
& \qquad\!\!\!\!\! + \big[\nabla_x \wdh \Theta(s,\al)\big]^* D_{s,\al}^{\wdt N}\Pi_s + \big[\nabla_x \wdh h^j(s)\big]^* D_s^{B^j} \aleph(s) -\big[\nabla_x \wdh g(s,\al)\big]^* \ell_s \Big\} \pi(d\al)  \\
\Psi(t,s) & \!\!\!\!\!\!: = [\Phi(t,s)]^* \nabla_x \wdh \SH(s) \\
M_s & \!\!\!\!\!\! := \Pi_s + \int_s^T \Psi(t,s)dt,\ N_{1,s}: = D_s^W M_s,\ N_{2,s}: = D_s^{B^j} M_s,\ N_{3,s}: =D_{s,\al}^{\wdt N} M_s.
\eeqa

\begin{thm}
Assume that Assumptions \ref{ass}, \ref{ass1}, and \ref{ass:cost} hold. Suppose \eqref{ellt} admits a unique solution $\ell \in L_\CP^2([0,T]\times \Omega;\DD_{1,2}(\rr^d))$. Moreover, we assume $\phi, L\in \DD_{1,2}(\rr)$, $\nabla_x \phi, \nabla_x L$ and $\Psi(t,s)$ are in $\LL_{1,2}(H)$ for all $0\leq s\leq t \leq T$. Then we have
\bea\ad
\disp \wdh \EE\Big[ \int_\Xi \Big\{ [\wdh R_s]^* M_s+  [\nabla_u \wdh G_1(s)]^* N_{1,s}+ [\nabla_u \wdh G_2^j(s)]^*  N_{2,s} \\
\ad \qquad+ [\nabla_u \wdh \Theta(s,\al)]^* N_{3,s} + [\nabla_u \wdh h^j(s)]^* D_s^{B_j}\aleph(s)- [\nabla_u \wdh g(s,\al)]^* \ell_s \Big\} \pi(d\al) \,\Big|\,\CF_s^Y \Big] =0,
\eea
where $\wdh R_s$ is defined in \eqref{def-OR} and $M_s,N_{1,s},N_{2,s}, N_{3,s}$ are defined in \eqref{def-SH}.
\end{thm}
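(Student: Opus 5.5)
The plan is to compute the directional derivative of the cost with the variational processes and then remove $x^1$, $y^1$, $\La$ in favour of $v$ alone. We cannot use the backward equation \eqref{Pt}, since the coefficients are random. Instead we use the Malliavin duality formulas \eqref{Mal-BW} and the flow $\Phi$ from \thmref{thm:flows}.

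Since the coefficients now depend on $\omega$, we first note that \propref{prop:var-cost} still holds: its proof used only Lemmas \ref{lem:mom}--\ref{lem:diff} and dominated convergence. Hence $0\le \lim_{\e\to0}\e^{-1}(J(u^\e)-J(\wdh u))=I(v)$. Because $\CU$ is convex and we may replace $v$ by $\pm$ perturbations of the form $v-\wdh u$, the identity in the theorem reduces to showing
\[
I(v)=\wdh\EE\int_0^T\!\!\langle \mathcal{G}_s , v_s\rangle ds ,
\]
where $\mathcal{G}_s$ is the integrand of the theorem. Choosing $v_s=\theta\,\mathbf 1_{[t,t+h]}(s)$ with $\theta$ bounded and $\CF_t^Y$-measurable, then letting $h\to0$ (Lebesgue differentiation), gives $\wdh\EE[\mathcal{G}_t\mid\CF_t^Y]=0$. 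The equality, rather than an inequality, comes from the fact that $\pm\theta$ are both admissible directions, as in the cited works \cite{MMPB13,WWX13}.

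To compute $I(v)$, we treat the three groups of terms separately.

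\textbf{The $y$-terms.} We apply It\^o's formula to $\langle \ell,y^1\rangle$ as in \eqref{dual-y}. This replaces the $y^1,z^1,r^1,\ga^1$ terms and $\nabla_y\psi\, y_0^1$ by $-\wdh\EE\int\langle \nabla_u\wdh g\,v+\nabla_x\wdh g\,x^1,\ell\rangle$ plus the term $\wdh\EE\langle \ell_T,\nabla_xf\,x^1_T\rangle$. This step needs no adjoint BSPDE.

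\textbf{The $\La$-terms.} Since $\La_T\aleph(0)$ appears, we write
\[
\wdh\EE[\La_T\phi+\textstyle\int \La_t\wdh L]=\wdh\EE\int_0^T d\La_t\,\aleph(t)\,\text{-type terms}.
\]
Using \eqref{eq-Ga}, Fubini, and the $B^j$-duality in \eqref{Mal-BW}, this becomes $\wdh\EE\int_0^T[\nabla_x\wdh h^j x^1_s+\nabla_u\wdh h^j v_s]D^{B^j}_s\aleph(s)\,ds$. This is where $\phi,L\in\DD_{1,2}$ is used.

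\textbf{The $x^1$-terms.} These are collected into $\wdh\EE\langle \Pi_0\text{-type}\rangle$ and converted using the flow representation
\[
x^1_t=\int_0^t\Phi(t,s)\wdh R_sv_s\,ds+\int_0^t\Phi(t,s)\nabla_u\wdh G_1 v_s\,dW_s+\cdots
\]
(a variation-of-constants formula obtained from \thmref{thm:flows} and the flow property). Substituting this into $\wdh\EE\langle\eta,x^1_T\rangle+\wdh\EE\int\langle\nabla_x\wdh L,x^1_t\rangle$ and applying \eqref{Mal-BW} to each stochastic integral produces $\wdh R_s^*\Pi_s$, $\nabla_u\wdh G_1^*D^W_s\Pi_s$, and the analogous $B^j$ and $\tilde N$ terms.

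The remaining contributions are those carrying $x^1$ again, namely $\nabla_x\wdh g^*\ell$, $\nabla_x\wdh h^{j*}D^{B^j}\aleph$, and the drift/noise feedback $\wdh O$, $K_i$, $\nabla_x\wdh\Theta$. These are exactly $\nabla_x\wdh\SH(t)$ paired with $x^1_t$. Applying the flow representation once more and Fubini, we get $\wdh\EE\int_0^T\langle\nabla_x\wdh\SH(t),\Phi(t,s)(\cdots)\rangle$, which equals $\int_s^T\Psi(t,s)dt$ paired with $v$-terms. Applying \eqref{Mal-BW} again, with $\Psi\in\LL_{1,2}$, gives the $M_s$ and $N_{i,s}$ structure.

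The main obstacle is the last step. First, justifying the stochastic variation-of-constants formula for $x^1$ requires the adjoint $\Phi(t,s)^*$ composed with Malliavin derivatives of $\Pi$. Second, the noise directions are infinitely many $\beta^i$ with $\sum_i|K_i|^2=\infty$, so the $W$-duality must be carried out in $\CL_2(H)$ using the $\bS^2$-valued flow and Assumption \ref{ass1} to make $\sum_i\langle D^W\Pi e_i,\Ga e_i\rangle$ converge. I would first prove everything for the truncated $\Ga^n$ (finite rank, as in the proof of \thmref{thm:main}) and then pass to $n\to\infty$ using the $(T-t)^{-\vartheta}$ integrability of the $\bS^2$ norm of $\Phi$.
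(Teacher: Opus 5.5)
\textbf{There is a genuine gap: the variation-of-constants formula for $x^1$ on which your derivation of $M_s$ and $N_{i,s}$ rests is not available.}

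Your treatment of the $y$-terms (It\^o's formula for $\langle\ell,y^1\rangle$) and of the $\La$-terms (Fubini plus the $B^j$-duality, giving $D^{B^j}_t\aleph(t)$) matches the paper. The failure is in the step that is supposed to produce $M_s$ and $N_{i,s}$.

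The representation $x^1_t=\int_0^t\Phi(t,s)\wdh R_sv_s\,ds+\int_0^t\Phi(t,s)\nabla_u\wdh G_1(s)v_s\,dW_s+\cdots$ does not follow from \thmref{thm:flows}. That theorem only provides $\Phi$ for the homogeneous equation \eqref{SPDE-jump}.

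\begin{itemize}
\item \emph{Not It\^o integrals.} The integrands $\Phi(t,s)\nabla_u\wdh G_1(s)v_s$ depend on the noise on $(s,t]$. So \eqref{Mal-BW}, which is stated for predictable integrands, does not apply to them.
\item \emph{No inverse flow.} The usual finite-dimensional route through $\Phi(s,0)^{-1}$ is closed. For $t>s$, $\Phi(t,s)\in\bS^2$ is compact and hence not invertible.
\item \emph{Skorokhod reading fails.} If you read the integrals as Skorokhod integrals, the formula is false once the multiplicative coefficients are random. Here $\wdh O$, $K_i$, $\nabla_x\wdh\Theta$ are random through $\wdh x$ and $\omega$. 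Already for scalar $dX=bX\,dW+dW$, $X_0=0$, with $\Phi(t,s)=\exp(\int_s^tb\,dW-\frac12\int_s^tb^2dr)$, one finds
\[
\int_0^t\Phi(t,s)\,\delta W_s=X_t-\int_0^t\Phi(t,s)\int_s^tD_sb_r\,(dW_r-b_r\,dr)\,ds .
\]
\end{itemize}

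Accordingly, duality applied to your naive formula puts the derivative only on $\nabla_x\wdh\SH(t)$. It yields $\Phi(t,s)^*D^W_s\nabla_x\wdh\SH(t)$, whereas the statement needs $D^W_s\Psi(t,s)$. The missing piece $(D^W_s\Phi(t,s))^*\nabla_x\wdh\SH(t)$ comes from the randomness of the coefficients, and the omitted correction terms would have to supply it. So the $N_{i,s}=D_sM_s$ structure does not come out of your computation as written.

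Your first stage also does not use the flow. It produces $\wdh R_s^*\Pi_s$ plus feedback terms in $\wdh O$, $K_i$, $\nabla_x\wdh\Theta$ paired with $x^1$, which is what the semigroup mild form gives, as in \eqref{phi-f}--\eqref{nabla-L-X}. Inserting the flow there would give $\Phi(T,s)^*$ in place of $e^{(T-s)A^*}$ and no feedback terms.

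The idea you are missing is to localize in time before the flow enters, which is how the paper proceeds.
\begin{itemize}
\item After \eqref{Iv-re}, take $v_t=\nu\indi_{(s,s+\theta]}(t)$ with $\nu$ bounded and $\CF^Y_s$-measurable. Then $x^1=0$ on $[0,s]$.
\item On $[s+\theta,T]$ the equation is homogeneous, so $x^1_t=\Phi(t,s+\theta)x^1_{s+\theta}$. The flow acts on a single random vector and never sits inside a stochastic integral.
\item Expand $x^1_{s+\theta}$ in mild form on $(s,s+\theta]$, where all integrands are adapted. Then \eqref{Mal-BW} applies against $\Psi(t,s)$.
\item Differentiate at $\theta=0$. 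The feedback part vanishes because $x^1_s=0$. What remains is $\wdh R_s^*\Psi(t,s)$, $[\nabla_u\wdh G_1(s)]^*D^W_s\Psi(t,s)$ and the analogous $B^j$ and $\wdt N$ terms. Together with $dI_2/d\theta|_{\theta=0}$ these give the $M_s,N_{i,s}$ expression.
\end{itemize}

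When you carry this out, keep $\Phi(t,s+\theta)$ rather than replacing it by $\Phi(t,s)$ before differentiating. Since $\Phi(t,s+\theta)$ does not see the noise on $(s,s+\theta]$, $D_s$ then acts on $\Phi(t,s)$ only through its coefficients.

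This $\theta$-derivative replaces your global representation followed by Lebesgue differentiation. Finally, the truncation $\Ga^n$ is not needed in this setting. The $W$-terms are the pairing $\textnormal{Tr}(\CK_2^*\CK_1)$ of a bounded $\CK_1$ with a Hilbert--Schmidt $\CK_2=D^W\Pi$ or $D^W\Psi$.
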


\begin{proof}
Since $\wdh u$ is a local minimum of $J$, \propref{prop:var-cost} implies $0=[dJ(u^\e)/d\e]_{\e=0}\\ =I(v)$. We rewrite the first line of $I(v)$ as $\wdh \EE\big[(\nabla_x \phi(\wdh x_T) x_T^1 - \ell_T \nabla_x f(\wdh x_T)x_T^1) + \ell_T \nabla_x f(\wdh x_T) x_T^1 +\phi(\wdh x_T) \La_T \big]$. From \eqref{eq-Ga}, we have
\beqa{phi-Ga}
\wdh \EE\big[\phi(\wdh x_T)\, \La_T \big] & = \wdh \EE \big\{\phi(\wdh x_T) \int_0^T\big  [\nabla_x \wdh h^j(t) x_t^1 +\nabla_u \wdh h^j(t) v_t\big]\, dB_t^j \big\} \\
& = \wdh \EE \int_0^T \bqv{\nabla_x \wdh h^j(t) x_t^1+\nabla_u \wdh h^j(t)v_t, D_t^{B^j} \phi(\wdh x_T)}_\rr dt
\eeqa
and Fubini's theorem implies
\beqa{Ga-L}&\!\!\!\!
\wdh \EE \big[\int_{\Xi_T}\La_t\, \wdh L(t,\al)\pi(d\al)dt \big]  \\
&\!\!\!\! =\wdh \EE \int_{\Xi_T} \wdh L(t,\al) \int_0^t \big[\nabla_x \wdh h^j(s) x_s^1 +\nabla_u \wdh h^j(s) v_s \big] dB_s^j \,\pi(d\al)\, dt \\
&\!\!\!\!=\wdh \EE \int_0^T \int_0^t \big[\nabla_x \wdh h^j(s) x_s^1+\nabla_u \wdh h^j(s)v_s \big] \int_\Xi \big(D_s^{B^j} \wdh L(t,\al) \big) \pi(d\al) \, ds\, dt  \\
&\!\!\!\! = \wdh \EE \int_0^T \big(\int_t^T\int_\Xi D_t^{B^j} \wdh L(s,\al) \pi(d\al)\,ds \big) \big[\nabla_x \wdh h^j(t) x_t^1 +\nabla_u \wdh h^j(t) v_t\big]\, dt.
\eeqa
Combining \eqref{phi-Ga} and \eqref{Ga-L} and recalling the definition of $\aleph$ yields
\beqa{phi-Ga-L}&
\wdh \EE\big[\phi(\wdh x_t)\, \La_T+ \int_{\Xi_T} \La_t\, \wdh L(t,\al) \pi(d\al) dt \big] \\
&  =\wdh \EE \int_0^T \bqv{\nabla_x \wdh h^j(t)x_t^1+ \nabla_u \wdh h^j(t)v_t, D_t^{B^j}\phi(\wdh x_T)+\int_t^T \int_{\Xi} D_t^{B^j} \wdh L(s,\al) ds}_{\rr} dt \\
& =\wdh \EE \int_0^T \bqv{\nabla_x \wdh h^j(t) x_t^1 +\nabla_u \wdh h^j(t)v_t, D_t^{B^j} \aleph(t)}_{\rr}dt.
\eeqa

Recalling \eqref{X1-re}, rewriting $x_T^1$ in mild form, and applying \eqref{Mal-BW} give
\beqa{phi-f}\ad
\wdh \EE\big\{ \bqv{\nabla_x \phi(\wdh x_T) - \nabla_x^* f(\wdh x_T)\ell_T, x_T^1} \big\} \\
\aad = \wdh \EE \Big\{\Bqv{\nabla_x \phi(\wdh x_T) - \nabla_x^* f(\wdh x_T) \ell_T,  \int_0^T e^{(T-t)A} \big[\wdh O_t x_t^1 + \wdh R_t v_t \big] dt \\
\aad\;\, + \int_0^T e^{(T-t)A}\Big[\big(\nabla_x \wdh G_1(t) x_t^1 + \nabla_u \wdh G_1(t)v_t\big) dW_t + \big( \nabla_x \wdh G_2^j(t) x_t^1 + \nabla_u \wdh G_2^j(t) v_t \big) dB_t^j\Big]\\

\aad\;\, +\int_{\Xi_T} e^{(T-t)A}\big(\nabla_x \wdh \Theta(t,\al)x_t^1 + \nabla_u \wdh \Theta(t,\al)v_t\big) \wdt N(d\al,dt)}
\Big\} \\
\aad = \wdh \EE \Big\{\int_0^T \bqv{\nabla_x \phi(\wdh x_T) - \nabla_x^* f(\wdh x_T)\ell_T , e^{(T-t)A} \big[ \wdh O_t x_t^1 +\wdh R_t v_t \big]} \\
\aad\qquad\quad + \bqv{e^{(T-t)A} \big( \nabla_x \wdh G_1(t) x_t^1 + \nabla_u \wdh G_1(t) v_t\big), D_t^W \big[\nabla_x \phi(\wdh x_T) - \nabla_x^* f(\wdh x_T)\ell_T \big]}_{\CL_2(H)} \\
\aad\qquad\quad + \bqv{ e^{(T-t)A} \big( \nabla_x \wdh G_2^j(t) x_t^1 + \nabla_u \wdh G_2^j(t) v_t \big), D_t^{B^j} \big[\nabla_x \phi(\wdh x_T) -  \nabla_x^* f(\wdh x_T)\ell_T \big] }_H \\
\aad \qquad\quad + \int_{\Xi} \bqv{e^{(T-t)A}(\nabla_x \wdh \Theta(t,\al)x_t^1 + \nabla_u \wdh \Theta(t,\al)v_t), \\
\aad \qquad\qquad\qquad D_{t,\al}^{\wdt N}[\nabla_x \phi(\wdh x_T)-\nabla_x^* f(\wdh x_T) \ell_T]}_H \pi(d\al) dt\Big\}.
\eeqa
A similar argument
%arguments
by rewriting $x_t^1$ satisfying \eqref{X1-re} in mild form implies
\beqa{L-X}
\wdh \EE \int_{\Xi_T } \nabla_x \wdh L(t,\al) x_t^1 \pi(d\al) dt = \SE_1+\SE_2+\SE_3+\SE_4,
\eeqa
where
\bea
\SE_1 \ad\!\!\!:=\wdh \EE \int_{\Xi_T} \int_0^t \nabla_x \wdh L(t,\al) \big\{e^{(t-s)A}(\wdh O_s x_s^1+ \wdh R_s v_s)\big\} ds\, \pi(d\al)dt \\
\SE_2 \ad\!\!\! := \wdh \EE \int_{\Xi_T}\int_0^t \bqv{ e^{(t-s)A} \big[\nabla_x \wdh G_1(s) x_s^1 + \nabla_u \wdh G_1(s) v_s\big], D_s^W \nabla_x \wdh L(t,\al)}_{\CL_2(H)}  ds\, \pi(d\al)dt  \\
\SE_3 \ad\!\!\! := \wdh \EE \int_{\Xi_T}\int_0^t \bqv{ e^{(t-s)A} \big[\nabla_x \wdh G_2^j(s) x_s^1 + \nabla_u \wdh G_2^j(s) v_s \big], D_s^{B^j} \nabla_x \wdh L(t,\al)}_H ds\, \pi(d\al)dt\\
\SE_4 \ad\!\!\! := \wdh \EE \int_{\Xi_T}\int_0^t \bqv{e^{(t-s)A} \big[\nabla_x \wdh \Theta(t,\al)x_s^1 + \nabla_u \wdh \Theta(s,\al) v_s \big], D_{s,\al}^{\wdt N} \nabla_x L(t,\al)} ds\, \pi(d\al)dt.
\eea
In terms of $\SE_2$, using
\eqref{Mal-BW}, we can rewrite it as
\bea
\!\!\!\!\SE_2\ad\!\!\!\!\!=
\wdh \EE \int_{\Xi_T} \int_0^t \bqv{e^{(t-s)A} \big[\nabla_x \wdh{G}_1(s) x_s^1 +\nabla_u \wdh{G}_1(s)v_s \big], D_s^W \nabla_x \wdh L(t,\al)}_{\CL_2(H)} ds\, \pi(d\al) dt\\
\aad\!\!\!\!\! =\wdh \EE \int_{\Xi_T}\!\!\int_0^t\!\! \bqv{\nabla_x \wdh{G}_1(s) x_s^1 +\nabla_u \wdh{G}_1(s)v_s, e^{(t-s)A^*} D_s^W \nabla_x \wdh L(t,\al)}_{\CL(H),\CL_2(H)} ds\, \pi(d\al) dt\\
\aad\!\!\!\!\!= \wdh \EE \int_{\Xi_T}\!\!\int_s^T\!\!\! \bqv{\nabla_x \wdh{G}_1(s) x_s^1 +\nabla_u \wdh{G}_1(s)v_s, e^{(t-s)A^*} D_s^W \nabla_x \wdh L(t,\al)}_{\CL(H),\CL_2(H)} dt\, ds \pi(d\al) \\
\aad\!\!\!\!\!= \wdh \EE \int_{\Xi_T}\!\!\int_t^T\!\! \bqv{\nabla_x \wdh{G}_1(t) x_t^1 +\nabla_u \wdh{G}_1(t)v_t, e^{(s-t)A^*} D_t^W \nabla_x \wdh L(s,\al)}_{\CL(H),\CL_2(H)} ds\, dt \pi(d\al) \\
\aad\!\!\!\!\! =\wdh \EE\!\! \int_0^T\!\!\bqv{\nabla_x \wdh{G}_1(t) x_t^1 +\nabla_u \wdh{G}_1(t)v_t, D_t^W\!\! \int_{\Xi_t^T}\! e^{(s-t)A^*} \nabla_x \wdh L(s,\al)\pi(d\al) ds}_{\CL(H),\CL_2(H)} dt.
\eea
Here, the pairing $\qv{\CK_1,\CK_2}_{\CL(H),\CL_2(H)}:=\textnormal{Tr}(\CK_2^* \CK_1)$ is well-defined as $\CK_2^* \CK_1\in \CL_1(H)$ for any $\CK_1\in \CL(H),\CK_2\in \CL_2(H)$. Moreover, Fubini's theorem was applied in the third line to interchange the order of temporal integration.  Applying similar argument of $\SE_2$ to $\SE_3,\SE_4$, \eqref{phi-f}, adding \eqref{phi-f} and \eqref{L-X} together, and recalling $\Pi_t$ in \eqref{def-Pi} yield
\beqa{nabla-L-X}
&
\wdh \EE \Big[\big(\nabla_x\phi(\wdh x_T)-\ell_T \nabla_x f(\wdh x_T) \big) x^1_T + \int_0^T \int_{\Xi} \nabla_x \wdh L(t,\al) x_t^1  \pi(d\al) dt \Big]\\
& = \wdh \EE\! \int_0^T \Big\{\bqv{\wdh O_t x_t^1+ \wdh R_t v_t, \Pi_t}_H+ \bqv{\nabla_x \wdh G_1(t) x_t^1 + \nabla_u \wdh G_1(t) v_t, D_t^W \Pi_t}_{\CL(H),\CL_2(H)} \\
& \qquad\quad + \bqv{\nabla_x \wdh G_2^j(t) x_t^1 + \nabla_u \wdh G_2^j(t) v_t, D_t^{B^j} \Pi_t}_H \\
& \qquad\quad + \int_{\Xi} \bqv{\nabla_x \wdh \Theta(t,\al) x_t^1+ \nabla_u \wdh \Theta(t,\al)v_t, D_{t,\al}^{\wdt N} \Pi_t }_H \pi(d\al) \Big\} dt.
\eeqa

Recall \eqref{dual-y}, inserting \eqref{phi-Ga-L}, \eqref{nabla-L-X}, and \eqref{dual-y} into $I(v)$ in \propref{prop:var-cost} gives
\beqa{Iv-re}
I(v)\ad \!\!\!= \wdh \EE \int_{\Xi_T} \Big\{ \bqv{[\wdh O_t]^* \Pi_t+ [\nabla_x \wdh G_1(t)]^* D_t^W \Pi_t + [\nabla_x \wdh G_2^j(t)]^* D_t^{B^j} \Pi_t  \\
\aad  \!\!\!\quad\; +[\nabla_x \wdh \Theta(t,\al)]^* D_{t,\al}^{\wdt N} \Pi_t + [\nabla_x \wdh h^j(t)]^* D_t^{B^j} \aleph(t)- [\nabla_x \wdh g(t,\al)]^*\ell_t, x_t^1}_H \Big\} \pi(d\al)dt \\
\aad\!\!\! + \wdh\EE \int_{\Xi_T} \Big\{ \bqv{[\wdh R_t]^* \Pi_t + [\nabla_u \wdh G_1(t)]^* D_t^W \Pi_t + +[\nabla_u \wdh G_2^j(t)]^* D_t^{B_j} \Pi_t  \\
\aad\!\!\! \quad\; + [\nabla_u \wdh \Theta(t,\al)]^* D_{t,\al}^{\wdt N} \Pi_t +[\nabla_u \wdh h^j(t)]^* D_t^{B^j} \aleph(t)-[\nabla_u \wdh g(t,\al)]^* \ell_t, v_t}_\CU  \Big\} \pi(d\al) dt.
\eeqa

Following \cite{WWX13,OS10}, we take a particular control $v_t = \nu \indi_{(s,s+\theta]}(t)$ for $\nu=\nu(\omega)$ being a bounded $\CF_s^Y$-measurable random variable and $0\leq s\leq s+\theta \leq T$. In this case, Eq. \eqref{X1} gives $x_t^1=0$ for any $0\leq t\leq s$.
Therefore, \eqref{Iv-re} becomes $I_1(\theta)+I_2(\theta)=0$, where $I_1(\theta)$ and $I_2(\theta)$ are defined by
\bea\!\!
I_1(\theta)& \!\!\! :=\wdh \EE \int_s^T\int_\Xi \bqv{[\wdh O_t]^* \Pi_t + [\nabla_x \wdh G_1(t)]^* D_t^W \Pi_t  + [\nabla_x \wdh G_2^j(t)]^* D_t^{B^j} \Pi_t
\\
&\!\!\!\quad + [\nabla_x \Theta(t,\al)]^* D_{t,\al}^{\wdt N} \Pi_t + [\nabla_x \wdh h^j(t)]^* D_t^{B_j} \aleph(t)-[\nabla_x \wdh g(t,\al)]^* \ell_t, x_t^1}_H \pi(d\al) dt, \\
I_2(\theta)& \!\!\! := \wdh\EE\int_s^{s+\theta}\int_\Xi \bqv{[\wdh R_t]^* \Pi_t + [\nabla_u \wdh G_1(t)]^* D_t^W \Pi_t+[\nabla_u \wdh G_2^j(t)]^* D_t^{B_j} \Pi_t \\
&\!\!\!\quad + [\nabla_u \Theta(t,\al)]^* D_{t,\al}^{\wdt N} \Pi_t  +[\nabla_u \wdh h^j(t)]^* D_t^{B^j} \aleph(t)- [\nabla_u \wdh g(t,\al)]^* \ell_t, \nu}_\CU \pi(d\al) dt.
\eea

For this particular control $v_t =\nu \indi_{(s,s+\theta]}(t)$, we note that for any $t \geq s+\theta$, the first
variational
equation $x^1$ becomes
\bea\ad\!\!\!\!\!\!\!\!
d x_t^1 =\! \{Ax_t^1 + \wdh O_t x_t^1\}dt +\! \nabla_x \wdh G_1(t) x_t^1 dW_t +\! \nabla_x \wdh G_2^j(t) x_t^1 dB_t^j+\!\!\int_\Xi\!\! \nabla_x\wdh \Theta(t,\al) x_t^1 \wdt N(d\al,dt),
\eea
which is a linear SPDE with jumps. Note that $\nabla_x \wdh G_1(t)x_t^1dW_t = \sum_{i=1}^\infty K_i(t)x_t^1d\beta_t^i$.
Therefore, \thmref{thm:flows} implies that $x_t^1=\Phi(t,s+\theta)x_{s+\theta}^1$, where $\Phi(\cdot,s+\theta)$ is the solution of \eqref{eq-Phi} for  initial time $s+\theta$. Noticing again $x_t^1=0$ for all $t\leq s$, the mild form of $x_{s+\theta}^1$ then has the following representation:
\beqa{x-srho}\!\!\!\!\!
x_{s+\theta}^1 \ad\!\!\!= \int_s^{s+\theta} e^{(s+\theta-\tau)A}\Big\{\wdh O_\tau x_\tau^1 d\tau+ \nabla_x \wdh G_1(\tau) x_\tau^1 dW_\tau +  \nabla_x \wdh G_2^j(t) x_\tau^1 dB_\tau^j  \\
\aad \!\!\! \qquad \qquad\qquad\qquad\; +\int_{\Xi} \nabla_x \wdh \Theta(\tau,\al)x_\tau^1 \wdt N(d\al,d\tau)\Big\} \\
\aad+ \int_s^{s+\theta} e^{(s+\theta-\tau)A} \Big\{\wdh R_\tau \nu d\tau + \nabla_u \wdh G_1(\tau)\nu dW_\tau + \nabla_u \wdh G_2^j(\tau) \nu dB_\tau^j \\
\aad \qquad \qquad\qquad\qquad\; +\int_{\Xi}\nabla_u \wdh \Theta(\tau,\al)\nu \wdt N(d\al,d\tau) \Big\} =: x_{s+\theta,1}^1+x_{s+\theta,2}^1.
\eeqa

Recalling the definition of $\nabla_x \wdh \SH(t)$ in \eqref{def-SH} and noting that $x_s^1=0$ lead to
\bea\ad\!\!\!
\frac{d I_1(\theta)}{d\theta} \Big|_{\theta=0} =\frac{d}{d\theta} \wdh\EE \bigg[\int_{s+\theta}^T \bqv{ \nabla_x \wdh\SH(t),  \Phi(t,s+\theta)x_{s+\theta}^1}_H dt \bigg]_{\theta=0} \\
\aad\!\!\!\!\!\!\! =\int_s^T \frac{d}{d\theta} \wdh \EE \Big[\bqv{\nabla_x \wdh \SH(t), \Phi(t,s+\theta) x_{s+\theta}^1 }_H \Big]_{\theta=0}dt = \int_s^T \frac{d}{d\theta} \wdh \EE \Big[\bqv{\Psi(t,s), x_{s+\theta}^1 }_H \Big]_{\theta=0}dt.
\eea
We now express $x_{s+\theta}^1$ in terms of \eqref{x-srho} and apply \eqref{Mal-BW}. It follows that
\bea\ad\!\!\!\!\!
\int_s^T \frac{d}{d\theta} \wdh \EE \Big[\bqv{\Psi(t,s), x_{s+\theta,1}^1} \Big]_{\theta=0}dt  \\
\aad\!\!\!\!\! = \int_s^T \frac{d}{d\theta}\, \wdh \EE \bigg\{ \int_s^{s+\theta}  \Big[\bqv{ \Psi(t,s), e^{(s+\theta-\tau)A}(\wdh O_\tau x_\tau^1)}  \\
\aad\!\!\!\!\! \qquad\quad + \bqv{e^{(s+\theta-\tau)A} \nabla_x \wdh G_1(\tau)x_\tau^1,  D_\tau^W \Psi(t,s)} + \bqv{e^{(s+\theta-\tau)A}\nabla_x \wdh G_2^j(\tau) x_\tau^1,  D_\tau^{B^j} \Psi(t,s)}\\
\aad\!\!\!\!\! \qquad\quad + \int_\Xi \bqv{e^{(s+\theta-\tau)A} \nabla_x \wdh \Theta(\tau,\al) x_\tau^1,  D_{\tau,\al} \Psi(t,s)}\pi(d\al) \Big] d\tau \bigg\}_{\theta=0} dt \\
\aad\!\!\!\!\! = \int_s^T \wdh \EE \Big[ \bqv{ \nabla_x \wdh G_1(s+\theta) x_{s+\theta}^1, D_{s+\theta}^W \Psi(t,s)} + \bqv{ \nabla_x \wdh G_2^j(s+\theta) x_{s+\theta}^1,  D_{s+\theta}^{B_j} \Psi(t,s)} \\
\aad\!\!\!\!\!\quad+\int_\Xi \bqv{\nabla_x \wdh \Theta(s+\theta,\al) x_{s+\theta}^1,  D_{s+\theta,\al}^{\wdt N} \Psi(t,s)} \pi(d\al) + \bqv{\Psi(t,s), \wdh O_{s+\theta} x_{s+\theta}^1} \Big]_{\theta=0} dt =0,
\eea
where the last line used the fact that $x_s^1=0$ after evaluating $\theta=0$. Moreover, a similar argument
as in the above implies
\beqa{I12}\ad\!\!\!
\int_s^T \frac{d}{d\theta} \wdh \EE \Big[\bqv{\Psi(t,s),  x_{s+\theta,2}^1} \Big]_{\theta=0} dt \\
\aad\!\!\! = \wdh \EE\int_s^T \Big[ \bqv{\Psi(t,s), \wdh R_{s+\theta}\nu} + \bqv{\nabla_u \wdh G_1(s+\theta)\nu, D_{s+\theta}^W \Psi(t,s)} \\
\aad\!\!\!\!\!\!\! \quad + \bqv{\nabla_u \wdh G_2^j(s+\theta)\nu,  D_{s+\theta}^{B_j} \Psi(t,s)} +\int_\Xi \bqv{\nabla_u \wdh \Theta(s+\theta,\al)\nu,  D_{s+\theta,\al}^{\wdt N} \Psi(t,s)} \pi(d\al) \Big]_{\theta=0} dt \\
\aad\!\!\! = \wdh \EE \int_s^T \bqv{\Psi(t,s), \wdh R_s\nu} + \bqv{\nabla_u \wdh G_1(s)\nu,  D_s^W \Psi(t,s)} + \bqv{\nabla_u \wdh G_2^j(s)\nu, D_s^{B_j} \Psi(t,s)} \\
\aad \qquad + \int_\Xi \bqv{ \nabla_u \wdh \Theta(s,\al)\nu,  D_{s,\al}^{\wdt N} \Psi(t,s)} \  \pi(d\al) dt.
\eeqa

In light of $I_2(\theta)$, we obtain
\beqa{I2}\disp\!\!
\frac{d I_2(\theta)}{d\theta}\Big|_{\theta=0}  \ad \!\!\!\!\!\! = \wdh \EE \Bqv{\int_\Xi \Big\{ [\wdh R_s]^* \Pi_s + [\nabla_u \wdh G_1(s)]^* D_s^W \Pi_s+[\nabla_u \wdh G_2^j(s)]^* D_s^{B_j} \Pi_s \\
\aad \!\!\!\!\!\! + [\nabla_u \wdh \Theta(s,\al)]^* D_{s,\al}^{\wdt N} \Pi_s\!+\! [\nabla_u \wdh h^j(s)]^* D_s^{B^j} \aleph(s)\!-\![\nabla_u \wdh g(s,\al)]^* \ell_s \Big\} \pi(d\al), \nu}_\CU.
\eeqa
Recall the definition of $M,N_1,N_2,N_3$ in \eqref{def-SH}, combining \eqref{I12} and \eqref{I2} yields
\bea\disp\!\!\!
\frac{dJ(u^\e)}{d\e}\Big|_{\e=0} \ad \!\!\!\! =\wdh \EE \Bqv{\int_\Xi \Big\{ [\wdh R_s]^* M_s+  [\nabla_u \wdh G_1(s)]^* N_{1,s}+ [\nabla_u \wdh G_2^j(s)]^*  N_{2,s} \\
\aad \!\!  + [\nabla_u \wdh \Theta(s,\al)]^* N_{3,s} + [\nabla_u \wdh h^j(s)]^* D_s^{B_j}\aleph(s)-[\nabla_u \wdh g(s,\al)]^* \ell_s \Big\} \pi(d\al), \nu}_{\CU}.
\eea
Since the above equality holds for any bounded $\CF_s^Y$-measurable $\nu$, we conclude that
\bea\ad
\wdh \EE\Big[\int_\Xi \Big\{ [\wdh R_s]^* M_s+  [\nabla_u \wdh G_1(s)]^* N_{1,s}+ [\nabla_u \wdh G_2^j(s)]^*  N_{2,s} \\
\aad \qquad +[\nabla_u \wdh \Theta(s,\al)]^* N_{3,s} + [\nabla_u \wdh h^j(s)]^* D_s^{B_j}\aleph(s)- [\nabla_u \wdh g(s,\al)]^* \ell_s \Big\} \pi(d\al) \,\Big|\,\CF_s^Y \Big] =0.
\eea
Consequently, the proof is complete.
\end{proof}

% \section*{Acknowledgments}
% We would like to two anonymous referees and the editor for their insightful comments and suggestions.

% \bibliographystyle{/Users/hqian/siamplain-abbr.bst}
% \bibliography{/Users/hqian/Library/CloudStorage/OneDrive-WayneStateUniversity/references.bib}

\end{document}